\documentclass[12pt,oneside]{amsart}
\usepackage{t1enc}
\usepackage[latin2]{inputenc}

\usepackage{amsmath,amsfonts}
\usepackage{amsthm}
\usepackage{amscd}
\usepackage{amssymb}
\usepackage{enumerate}
\usepackage{mathrsfs}
\usepackage{dsfont}

\usepackage{stmaryrd}
\usepackage[T1]{fontenc}

\usepackage{mathtools}

\usepackage{bbm}

\usepackage{wasysym}
\usepackage{verbatim}
\usepackage{fancyhdr}
\usepackage{fancybox}
\usepackage{url}
\usepackage{xcolor}

\newcommand{\bs}{\backslash}
\newcommand{\subs}{\subseteq}
\newcommand{\half}{\frac{1}{2}}

\usepackage[pdfencoding=auto, psdextra]{hyperref}
\hypersetup{colorlinks=true, citecolor=black, linkcolor=black, 
bookmarksopen=true,
}

\newcommand{\ef}{\mathcal{F}}
\newcommand{\G}{\mathcal{G}}

\newcommand{\bc}{\begin{center}}
\newcommand{\ec}{\end{center}}

\DeclareMathOperator{\supp}{supp}

\newcommand{\F}{\mathcal{F}}

\DeclareMathOperator{\cf}{cf}

\newtheorem{thm}{Theorem}[section]
\newtheorem*{thm*}{Theorem}
\newtheorem{lem}[thm]{Lemma}
\newtheorem{prop}[thm]{Proposition}
\newtheorem{cor}[thm]{Corollary}

\theoremstyle{definition}

\newtheorem{prob}[thm]{Problem}

\newtheorem{df}[thm]{Definition}
\newtheorem{exa}[thm]{Example}

\newtheorem{rem}[thm]{Remark}

\title{Non-separable combinatorial Banach spaces}

\author{Piotr Borodulin--Nadzieja}
\address[Piotr Borodulin-Nadzieja]{Mathematical Institute, University of Wroc\l aw \\   pl. Grunwaldzki 2, 50-384 Wroc\l aw, Poland}
\email{pborod@math.uni.wroc.pl}

\author{Miko\l aj Marsy}
\address[Miko\l aj Marsy]{Mathematical Institute, University of Wroc\l aw \\   pl. Grunwaldzki 2, 50-384 Wroc\l aw, Poland}
\email{mikolaj.marsy@math.uni.wroc.pl}

\author{Kamil Ryduchowski}
\address[Kamil Ryduchowski]{Faculty of Mathematics, Informatics and Mechanics, University of Warsaw, ul. Banacha 2, 02-097 Warszawa, Poland}
\address{Institute of Mathematics, Polish Academy of Sciences, ul. \'Sniadeckich 8, 00-656 Warszawa, Poland}
\email{k.ryduchowski@uw.edu.pl}

\subjclass[2020]{46B45, 46B20, 46B25, 03E05, 03E35, 03E75, 05C65.}
\keywords{families of finite sets, combinatorial Banach spaces, graph-generated families, comparability graphs, universal hypergraphs, Suslin tree, forcing}

\begin{document}

\begin{abstract} We study combinatorial Banach spaces, i.e. Banach spaces induced by families of finite sets, of uncountable density. We prove some general theorems, for example we characterize when such spaces contain isomorphic
	copies of long $c_0$ and long $\ell_1$. We discuss the existence of Banach spaces which are (complementedly) universal in the category of Banach spaces with unconditional basis of fixed cardinality. We present some examples, in particular we show
	that the Banach spaces generated by Suslin trees do not contain uncountable equilateral sets.
\end{abstract}

\maketitle

\section{Introduction}

Consider a cardinal number $\kappa$. Let $\mathcal{F}$ be a family of finite subsets of $\kappa$ which is closed under taking subsets and which covers $\kappa$. Define $\| \cdot \|_\mathcal{F} \colon \mathbb{R}^\kappa \to [0,\infty]$ by
\[ \| x \|_\mathcal{F} = \sup_{F\in \mathcal{F}} \sum_{i\in F} |x(i)|. \]

Let $X_\mathcal{F}$ be the completion of $c_{00}(\kappa)$ with respect to the above norm, and let $Y_\mathcal{F} = \{x\in \mathbb{R}^\kappa\colon \| x\|_\mathcal{F} < \infty\}$. Endowed with $\|\cdot\|_\mathcal{F}$, both $X_\mathcal{F}$ and
$Y_\mathcal{F}$ are Banach spaces. The spaces of the form $X_\mathcal{F}$ will be called \emph{combinatorial Banach spaces}.

Combinatorial Banach spaces were studied systematically for $\kappa = \omega$, see e.g. in \cite{ZOO}, \cite{Perfect}.  

Nonseparable Banach spaces defined in the above manner appeared e.g. in the work of Shelah and Steprans (\cite{Shelah-Steprans}), and in an article by Koszmider and Ryduchowski (\cite{KamilKoszmider}), although in the latter the authors considered the
$\ell_2$ norm instead of the $\ell_1$ norm. 

The aim of this article is to study structural properties of nonseparable Banach spaces. We are particularly interested in the question of which (analytic) properties of spaces $X_\mathcal{F}$ can be characterized by (combinatorial) properties of
$\mathcal{F}$ or its closure (in the Cantor topology).

For example, one can consider if a Banach space contains an isomorphic copy of $\ell_1(\lambda)$ or $c_0(\lambda)$, for a cardinal $\lambda$. In the separable case, the property of containing a copy of $\ell_1$ is a typical 'combinatorial' property: a space $X_\mathcal{F}$
contains a copy of $\ell_1$ if and only if there is an infinite set all of whose finite subsets are in $\mathcal{F}$ (i.e. $\overline{\mathcal{F}}$, the closure of $\mathcal{F}$, in the Cantor topology, contains an infinite set). On the other hand, the
property of containing a copy of $c_0$ seems to be impossible, or at least difficult,
to encode in the combinatorial structure of $\mathcal{F}$ (see \cite{ZOO} for a discussion).

In light of the above results, studying these kind of questions in the nonseparable case brings surprises. First, for $\kappa>\omega$ the question if $X_\mathcal{F}$ contains a copy of $c_0(\kappa)$ boils down to an easy combinatorial question
about $\mathcal{F}$ (Theorem \ref{c0-general}).
Second, the characterization of containing a copy of $\ell_1(\kappa)$ turns out, rather unexpectedly, to be connected to the question of existence of measures on $\overline{\mathcal{F}}$ of certain Maharam type (Theorem \ref{l1-general}). 

In \cite{Perfect} the authors investigated families $\mathcal{F}$ induced by colorings: for a function $c\colon [\kappa]^2 \to \{0,1\}$ let $\mathcal{F}_c$ be the family of all finite monochromatic sets in color $1$. The
Banach spaces induced by the families of the form $\mathcal{F}_c$ behave in a quite regular way. For example, the space $X_{\mathcal{F}_c}$ contains a copy of $\ell_1(\kappa)$ if and only if $\overline{\mathcal{F}_c}$ contains a set of size $\kappa$.
In \cite{Perfect}
the authors proved a theorem saying that a coloring $c\colon [\omega]^2 \to \{0,1\}$ is perfect (see Section \ref{sec:Preliminaries} for the definition) if and only if $X^*_{\mathcal{F}_c} = Y_{\mathcal{F}_{1-c}}$, where $X^*_{\mathcal{F}_c}$ is the
space dual to $X_{\mathcal{F}_c}$. We prove a certain generalization of
this result for the nonseparable case  (Corollary \ref{perfect}).

One of our main motivations to study nonseparable combinatorial spaces is that on uncountable cardinals one can find a plethora of interesting combinatorial structures: Aronszajn trees, Suslin trees, Hausdorff gaps, Sierpi\'nski colorings to name
only a few of them living on
$\omega_1$. Some of them may be used, in a natural way, to define families of finite subsets, and so also to define combinatorial Banach spaces. Studies of
separable combinatorial spaces seem to support a meta-mathematical statement saying that that canonical combinatorial objects should induce important Banach spaces. If this is the case, the structures mentioned above should provide a rich family of
possibly important Banach
spaces.

In Section \ref{sec:Suslin} we focus on Suslin trees, probably the most notable creatures in the menagerie of canonical structures in $\omega_1$. Every tree generates very natural families of finite sets: finite chains and finite antichains. Let $X_S$ be the
space generated by the family of finite chains of a Suslin tree. Then $X_S$, a nonseparable Banach space, does not contain either copies of $\ell_1(\omega_1)$, or of $c_0(\omega_1)$.This property is already quite interesting, as if $X$ is a
separable combinatorial space then it has to contain copies of $\ell_1$ or $c_0$. However, in the nonseparable setting spaces with this property exists even in $\mathsf{ZFC}$: if $c\colon [\omega_1]^2 \to \{0,1\}$ is a Sierpi\'nski coloring (see
Section \ref{sec:Examples} for the definition), then $X_c$ does
not contain neither copies of long $\ell_1$, nor of long $c_0$. In Section \ref{sec:Suslin} we find another interesting, and perhaps unexpected, property of the Banach space induced by Suslin tree: it does not contain an uncountable equilateral set
(Theorem \ref{Suslin-equilateral}).
Note that under $\mathsf{MA}(\omega_1)$ every combinatorial Banach space contains an uncountable equilateral set (Proposition \ref{MA-equilateral}). Some constructions of combinatorial Banach spaces without uncountable equilateral sets have been
known before (see e.g. \cite{KamilKoszmider}), but our construction seems to be simpler, and it is generated by a classical object. 

In Section \ref{sec:Universal} we study another canonical structure. In \cite{ZOO} the authors showed that the combinatorial Banach space induced by so-called universal hypergraph on $\omega$ is isomorphic to the Pe\l czy\'nski complementedly universal space. Here,
we show that as long as a universal hypergraph does exist on $\kappa$, then there exists a (combinatorial) Banach space $X$ of density $\kappa$ such that every Banach space with unconditional base of size $\kappa$ embeds in a complemented way in $X$.
Such a space $X$ is a natural generalization of the Pe\l czy\'nski space for higher cardinals. To the best of our knowledge, it has not been studied before. We also show that consistently there are no such spaces of density $\omega_1$: we show it by
iterating $\omega_2$ many times the most natural forcing notion adding a
hereditary family of finite sets covering $\omega_1$ (which is equivalent to the forcing adding $\omega_1$ many Cohen reals). The forcing itself seems to add an interesting Banach space - it does not contain an isomorphic copy of any ground-model Banach space with unconditional basis of size $\omega_1$.

\section{Preliminaries}\label{sec:Preliminaries}

We will deal with families $\mathcal{F}$ of finite subsets of $\kappa$, where $\kappa$ is an infinite cardinal number, which are hereditary (i.e. closed under taking subsets) and which contain $\{\alpha\}$ for each $\alpha < \kappa$. Such a family $\mathcal{F}$ can be seen as a subset of $2^\kappa$ (via characteristic functions) so we can speak about
topological properties of $\mathcal{F}$ and consider the closure $\overline{\ef}$ in the product topology on $2^\kappa$. Since $\ef$ is hereditary, being an element of its closure has a simple combinatorial characterization: $A\subseteq \kappa$ belongs to it if and only if $[A]^{<\omega} \subseteq \mathcal{F}$. Consequently, $\mathcal{F}$ as above is compact if and only if there is no infinite $A\subseteq \kappa$ such that $[A]^{<\omega} \subseteq \mathcal{F}$. 

There is a natural way of generalizing the property characterizing compactness. Call a hereditary family $\mathcal{F}$ of subsets of $\kappa$ \emph{Corson} if $\overline{\mathcal{F}}$ does not contain an uncountable set. Notice that $\mathcal{F}$ is
Corson if and only if $\overline{\mathcal{F}}$ is Corson compact, i.e. it is a compact family which can be embedded in $\Sigma(\mathbb{R}^{\omega_1}) = \{x\in \mathbb{R}^\kappa\colon |\mathrm{supp}(x)| \leq \omega\}$. Similarly, for
an uncountable cardinal $\lambda\leq \kappa$ say that $\mathcal{F} \subseteq [\kappa]^{<\omega}$ is $\lambda$-Corson if $\overline{\mathcal{F}}$ does not contain a set of size $\lambda$. Again, $\mathcal{F}$ is $\lambda$-Corson if and only if
$\overline{\mathcal{F}}$ is $\lambda$-Corson compact, i.e. it can be embedded in $\{x\in \mathbb{R}^\kappa\colon |\mathrm{supp}(x)|<\lambda\}$.  

\begin{prop} Suppose that $\mathcal{F}$ is a hereditary family of subsets of $\omega_1$. Denote $K = \overline{\mathcal{F}}$. Then
	\begin{enumerate}
		\item $K$ is not first countable,
		\item $\mathcal{F}$ is a Corson family iff there exists no continuous surjection $f\colon K \to [0,1]^{\omega_1}$,
		\item $\mathcal{F}$ is a Corson family iff  $K$ is Corson compact.
	\end{enumerate}
\end{prop}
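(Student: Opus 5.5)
For (1), I would look at the point $\emptyset\in K$. It has a neighbourhood base of sets $U_A=\{B\in K\colon B\cap A=\emptyset\}$ with $A\subseteq\omega_1$ finite. Any countable family of neighbourhoods of $\emptyset$ is determined by countably many finite sets $A_n$. Their union $C$ is countable, so I can pick $\alpha\in\omega_1\setminus C$. Then $\{\alpha\}\in\mathcal{F}\subseteq K$ lies in every $U_{A_n}$ but not in the open neighbourhood $\{B\colon \alpha\notin B\}$ of $\emptyset$. Hence $\emptyset$ has no countable local base, and this only uses that $\mathcal{F}$ contains all singletons.

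For (3), I would use the embedding characterisation from the preliminaries. If $\mathcal{F}$ is Corson, every element of $K$ is a countable subset of $\omega_1$, so the characteristic functions place $K$ inside $\Sigma(\mathbb{R}^{\omega_1})$, and $K$ is Corson compact. For the converse, suppose $A\in K$ is uncountable, and take $A$ of size $\omega_1$. Then $2^A$ embeds in $K$ via $B\mapsto B$, since $K$ is closed under subsets: subsets of a set whose finite subsets lie in $\mathcal{F}$ have the same property. The set $2^{A}\cong 2^{\omega_1}$ is not Corson compact, because closed subspaces of Corson compacta are Corson compact and Corson compacta are Fréchet–Urysohn (or have countable tightness), while $2^{\omega_1}$ does not. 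Therefore $K$ is not Corson.

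For (2), the forward direction goes as follows. If $\mathcal{F}$ is Corson then $K$ is Corson compact by (3). Continuous images of Corson compacta are Corson (a classical theorem of Gul'ko / Michael–Rudin), and $[0,1]^{\omega_1}$ is not Corson compact, so no continuous surjection exists. Alternatively, I would use that Corson compacta have countable tightness, which is preserved under continuous images between compacta only in a weaker form, so the image theorem is the cleaner route. For the converse, if $A\in K$ has size $\omega_1$, then $2^A\cong 2^{\omega_1}$ is a closed subset of $K$. Since $2^\omega$ maps onto $[0,1]$, the space $2^{\omega_1}\cong(2^\omega)^{\omega_1}$ maps onto $[0,1]^{\omega_1}$. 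By Tietze extension applied coordinatewise, this extends to a continuous map $K\to[0,1]^{\omega_1}$, which is still onto.

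The main obstacle is the forward direction of (2), which rests on the nontrivial preservation of Corson compactness under continuous images. Everything else is elementary topology of $K$ combined with the heredity of $\mathcal{F}$.
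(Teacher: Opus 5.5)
Your proof is correct. Parts (1), (3, $\Rightarrow$) and (2, $\Rightarrow$) follow the paper. The paper simply takes as known that a Corson compactum cannot be mapped onto $[0,1]^{\omega_1}$, and you justify this by the theorem that continuous images of Corson compacta are Corson.

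The other two steps differ from the paper:
\begin{itemize}
\item For (2, $\Leftarrow$) the paper needs no Tietze extension. The restriction $B\mapsto B\cap A$ is a continuous map $K\to 2^A$, onto precisely because $K$ is closed under subsets, and it is composed with $2^{\omega_1}\to[0,1]^{\omega_1}$. Your closed embedding plus coordinatewise Tietze extension also works ($K$ is compact Hausdorff, hence normal), but is heavier.
\item For (3, $\Leftarrow$) the paper goes through (2, $\Leftarrow$) and the non-mappability fact. You instead use that $2^{\omega_1}$ is a closed subspace of $K$ and that Corson compacta have countable tightness. This only needs the trivial heredity of Corson compactness to closed subspaces.
\end{itemize}

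One correction to your aside: countable tightness \emph{is} preserved by continuous surjections $f$ between compact Hausdorff spaces. Such maps are closed, hence quotient. If $B$ contains the closure of each of its countable subsets, then so does $f^{-1}(B)$, since $f(\overline{C})\subseteq\overline{f(C)}$. So countable tightness upstairs makes $f^{-1}(B)$ closed, and hence $B$ is closed.

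Consequently the tightness argument gives (2, $\Rightarrow$) directly, because $[0,1]^{\omega_1}$ has uncountable tightness. The whole proposition then needs nothing beyond ``Corson compacta have countable tightness'', and the continuous-image theorem is not required.
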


\begin{proof} 
	(1) Notice that $\emptyset$ does not have a countable base.
	\medskip

	(2 $\impliedby$). Suppose that $\ef$ is not a Corson family and that $I\subseteq \kappa$ is a set of size $\omega_1$ from $\overline{\mathcal{F}}$. Let $g\colon K \to \{0,1\}^{I}$ be defined by $g(x) = x|I$. Then $g$ is
	a continuous surjection and $\{0,1\}^I$ can be continuously transformed onto $[0,1]^{\omega_1}$. 
	\medskip

	(3 $\implies$). If $\ef$ is Corson, then  $K\subs \Sigma_\kappa(\{0,1\})$.
	\medskip

	(3 $\impliedby$) Note that Corson compacta cannot be mapped continuously onto $[0, 1]^{\omega_1}.$ By (2 $\impliedby$) if $\ef$ is not a Corson family, there exists a surjection, hence $K$ is not a Corson compact.
    \medskip
    
    (2 $\implies$) If $\ef$ is Corson, then by (3 $\implies$) $K$ is a Corson compact, so such surjection cannot exist.
\end{proof}

We will be interested in the families of finite sets which are generated by colorings, i.e. functions of the form $c\colon [\kappa]^2 \to 2$. For a coloring $c$ let $\mathcal{F}_c$ be the family of all finite 1-homogeneous sets.

For a family $\mathcal{F}$, define \[ \mathcal{F}^\perp = \{ A\in [\kappa]^{<\omega}\colon |A \cap F|\leq 1 \mbox{ for each }F\in \mathcal{F}\}. \]
For example, if $\mathcal{F}$ is a family of finite $1$-monochromatic sets for a coloring, then $\mathcal{F}^\perp$ is a family of finite $0$-monochromatic sets. Also, in this case $\overline{\mathcal{F}}$ is the set of all $1$-monochromatic sets,
and $\overline{\mathcal{F}^\perp}$ is the set of all $0$-monochromatic sets.
In fact, note that $\mathcal{F}^{\perp\perp} = \mathcal{F}$ if and only if $\mathcal{F}$ is a monochromatic set for some coloring (see \cite{Perfect}).

We will say that a (hereditary) family of subsets of $\kappa$ is \emph{monochromatic} if there is a coloring $c\colon [\kappa]^2 \to 2$ such that $\mathcal{F}$ is the family of finite 1-monochromatic subsets of $\kappa$. For a coloring $c\colon
[\kappa]^{2} \to \{0,1\}$ we write $X_c$, $Y_c$ instead of $X_{\mathcal{F}_c}$ and $Y_{\mathcal{F}_c}$. Sometimes it will be more convenient to speak about graphs instead of coloring. Every coloring $c\colon [\kappa]^2 \to \{0,1\}$ naturally generates a graph on $\kappa$ ($\{\alpha, \beta\}$ is an edge iff
$c(\alpha, \beta)=1$) and vice versa. For a graph $G$ by $X_G$ we denote the Banach space induced by the family of finite cliques of $G$. 

A graph is \emph{perfect} if its every finite subgraph (induced by
vertices) has the clique number equal to chromatic number (see \cite{Perfect}). Say that a coloring is perfect if the graph induced by the coloring is perfect. 

We call a family $\F$ \emph{bounded} if there is $n<\omega$ such that $|F|\leq n$ for every $F\in \F$.

Now we want to mention some useful facts about space $X_\ef$ for a hereditary family $\ef$ covering $\omega_1$. For each $x\in X_\mathcal{F}$ we have 
\[
    |\{\alpha\in\omega_1: x(\alpha) \neq 0\}| \leq \omega.
\] 
We call this set \textit{a support of }$x$ (and denote $\supp(x)$). Note, that if support was to be uncountable, then there would be an uncountable $I\subseteq \kappa$ and $r>0$ such that $|x(\alpha)|\geq r$ for each $\alpha \in I$, and then $x$ could not be approximated by elements of $c_{00}(\kappa)$.

If $(e_\alpha)_{\alpha\in I}$ is a base of a Banach space $X$, $x\in X$, and $A\subseteq I$, then by $P_A(x)$ we denote the projection of $x$ onto $A$, i.e. $x'\in X$ such that $x'(\alpha)=x(\alpha)$ if $\alpha \in A$ and $x'(\alpha)=0$ otherwise.

Notice that every $\varphi\in X^*_\mathcal{F}$ is uniquely determined by its values on Shauder basis, therefore $\varphi$ can be treated as an element of $\mathbb{R}^\kappa$.


In fact, there is one more Banach space associated to $\mathcal{F}$ in a natural way: the space of continuous functions on $\overline{\mathcal{F}}$. 

\begin{prop}\label{C(F)}
    Let $\F$ be a hereditary family on $\kappa$. Then $X_\F$ is a closed subspace of $C(\overline{\F})$.
\end{prop}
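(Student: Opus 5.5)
We will construct an explicit linear isometric embedding $T\colon X_\F \to C(\overline{\F})$; its image is then automatically closed, since $X_\F$ is complete. The natural candidate comes from viewing $x\in\mathbb{R}^\kappa$ as a "measure" on $\kappa$ and integrating it over sets. For $x\in c_{00}(\kappa)$ put
\[ T(x)(A) = \sum_{\alpha\in A} x(\alpha), \qquad A\in \overline{\F}. \]
For $x=e_\alpha$ this is the function $A\mapsto \chi_A(\alpha)$, i.e. the $\alpha$-th coordinate projection restricted to $\overline{\F}\subseteq 2^\kappa$, which is continuous. By linearity, $T(x)\in C(\overline{\F})$ for every $x\in c_{00}(\kappa)$, and $T$ is linear on $c_{00}(\kappa)$.

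\textbf{Step 1: $T$ is an isometry on $c_{00}(\kappa)$.} This is the key computation. First, for $A\in\overline{\F}$ the set $A\cap \supp(x)$ is finite, and it lies in $\F$ because $[A]^{<\omega}\subseteq\F$. Hence
\[ |T(x)(A)| \le \sum_{\alpha\in A\cap\supp(x)}|x(\alpha)| \le \|x\|_\F, \]
so $\|T(x)\|_\infty\le\|x\|_\F$. For the reverse inequality, fix $F\in\F$ and split $F$ into $F^+=\{\alpha\in F\colon x(\alpha)>0\}$ and $F^-=\{\alpha\in F\colon x(\alpha)<0\}$. Both sets are in $\F\subseteq\overline{\F}$, since $\F$ is hereditary, and
\[ \sum_{\alpha\in F}|x(\alpha)| = T(x)(F^+) - T(x)(F^-) \le 2\|T(x)\|_\infty. \]

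This is the main obstacle: the naive map only gives an isomorphism with constant $2$, not an isometry. It suffices for "closed subspace" up to isomorphism. If an isometric copy is desired, I would instead embed into $C(\overline{\F}\times\{-1,1\}^{\kappa})$, or, more simply, into $C(L)$ for the compact set
\[ L=\{\varepsilon\chi_A\colon A\in\overline{\F},\ \varepsilon\in\{-1,0,1\}^\kappa\}. \]
Here $L$ is a closed subset of $\{-1,0,1\}^\kappa$, and one sets $T(x)(\varepsilon\chi_A)=\sum_{\alpha\in A}\varepsilon(\alpha)x(\alpha)$; then taking $\varepsilon=\operatorname{sgn}(x)$ gives equality of norms. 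However, I expect the intended reading to be "isomorphic to a closed subspace", and I would present the map on $\overline{\F}$ itself with the two-sided estimate $\tfrac12\|x\|_\F\le\|T(x)\|_\infty\le\|x\|_\F$.

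\textbf{Step 2: extension and closedness.} Since $T$ is bounded and bounded below on the dense subspace $c_{00}(\kappa)$, it extends uniquely to a bounded operator $T\colon X_\F\to C(\overline{\F})$. The extension satisfies the same inequalities, so it is an isomorphism onto its image. The image is complete, being isomorphic to the Banach space $X_\F$, and is therefore closed in $C(\overline{\F})$. Finally, I would remark that for $x\in X_\F$ the extension is still given by the formula $T(x)(A)=\sum_{\alpha\in A}x(\alpha)$. The series converges absolutely, because every finite partial sum is bounded by $\|x\|_\F$, and this formula passes to the limit under uniform convergence.
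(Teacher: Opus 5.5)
Your proposal is correct and follows essentially the paper's proof: it uses the same map $Tx(A)=\sum_{\alpha\in A}x(\alpha)$ and the same two-sided estimate $\tfrac12\|x\|_\F\le\|Tx\|_\infty\le\|x\|_\F$, obtained by splitting into positive and negative parts, so, as in the paper, you get an isomorphic (constant $2$) rather than isometric copy. The only difference is organizational: you get continuity of $Tx$ for general $x\in X_\F$ by extending from $c_{00}(\kappa)$ through uniform limits, whereas the paper checks it directly by approximating $x$ with some $x_0\in c_{00}(\kappa)$ and comparing $Tx(A)$ with $Tx(B)$ for $A,B$ that agree on $\supp(x_0)$.
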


\begin{proof}
	The most natural mapping $T\colon X_\F \to C(\overline{\F})$, defined by \[ Tx (A) = \sum_{\alpha \in A} x(\alpha), \] works. Since every $x$ has a countable support and is absolutely convergent, $Tx$ yields a well defined function on $\overline{\F}$.
	We shall check that this function is continuous. Pick $\varepsilon > 0$. Then let $x_0$ be an element of $c_{00}(\kappa)$ with $\|x_0 - x\| <\varepsilon.$ Let $S$ be a support of $x_0$. Then pick $A, B \in \overline{\F}$ satisfying $A\cap S = B\cap S$. We have 

	\begin{align*}
            |Tx(A) - Tx(B)| = & \bigg | \sum_{\alpha \in A\Delta B} x(\alpha) \bigg | \leq \sum_{\alpha \in A \Delta B} |x(\alpha)| = \sum_{A\bs B} |x(\alpha)| + \sum_{B\bs A} |x(\alpha)| = \\
			& \leq \sum_{B \bs S} |x(\alpha)| + \sum_{A \bs S} |x(\alpha)| \leq 2 \|x - x_0\|_\mathcal{F} \leq 2\varepsilon.
    \end{align*}
	
	Now we will check that $T$ itself is continuous.	Obviously, we have $\|Tx\|_\infty \leq \|x\|_\mathcal{F}$.
    Now, pick $A \in \overline{\F}$. Let $A_+ = \{\alpha \in A: x(\alpha) > 0\}, A_-= \{\alpha \in A: x(\alpha) < 0\}$. We have \[
    \sum_{\alpha \in A} |x(\alpha)| \leq \bigg |\sum_{\alpha\in A_+} x(\alpha) \bigg | + \bigg | \sum_{\alpha \in A_-} x(\alpha) \bigg | \leq 2 \| Tx\|_\infty,
\]so $\half \| x\|_\mathcal{F} \leq \|Tx\|_\infty.$
\end{proof}

\section{Functionals on combinatorial spaces.}\label{sec:Functionals}

In this section we will prove some basic results concerning spaces dual to $X_\mathcal{F}$'s and the spaces $Y_\mathcal{F}$. 
We will check for which families $\mathcal{F}$ the spaces  $X^*_\mathcal{F}$ and $Y_\mathcal{F}$ contains only vectors with countable supports. Finally, we will prove a generalization of Theorem 4.6 from \cite{Perfect}, saying that $X^*_\mathcal{F}
= Y_\mathcal{G}$ precisely when $\mathcal{F}$ is induced by a perfect coloring $c$ and $\mathcal{G}$ is induced by the coloring $1-c$.

\begin{prop}\label{countable-support-in-*}
Let $c$ be a coloring on $\kappa>\omega$. Then $c$ does not have an uncountable $1$-monochromatic set iff every functional from $X^*_c$ has a countable support.
\end{prop}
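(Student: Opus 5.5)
We prove the two implications separately. Throughout, a functional $\varphi\in X^*_c$ is identified with the vector $(\varphi(e_\alpha))_{\alpha<\kappa}\in\mathbb{R}^\kappa$, and its support is $\{\alpha\colon \varphi(e_\alpha)\neq 0\}$.

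\emph{($\Leftarrow$, by contraposition).} Suppose $A\subseteq\kappa$ is an uncountable $1$-monochromatic set. Then $[A]^{<\omega}\subseteq\mathcal{F}_c$, so for every $x\in c_{00}(\kappa)$ we have
\[ \Bigl|\sum_{\alpha\in A}x(\alpha)\Bigr|\leq \sum_{\alpha\in A\cap\supp(x)}|x(\alpha)|\leq\|x\|_c, \]
since $A\cap\supp(x)$ is a finite subset of $A$ and hence belongs to $\mathcal{F}_c$. Thus $\varphi(x)=\sum_{\alpha\in A}x(\alpha)$ is a functional of norm at most $1$ on $c_{00}(\kappa)$, and it extends to $X_c$. (Equivalently, $\varphi=\delta_A\circ T$ with $T$ from Proposition \ref{C(F)}, where $\delta_A$ is evaluation at $A\in\overline{\mathcal{F}_c}$.) Its support is $A$, which is uncountable.

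\emph{($\Rightarrow$).} Assume every $1$-monochromatic set is countable, and let $\varphi\in X^*_c$ have uncountable support. Pigeonholing on sign and on $|\varphi(e_\alpha)|>1/n$, we find $\varepsilon>0$ and an uncountable $B$ on which $\varphi(e_\alpha)>\varepsilon$; the case of negative values is handled by replacing $\varphi$ with $-\varphi$. The aim is to find, for each $N$, a finite set $F\subseteq B$ with $F\in\mathcal{F}_c$ and $|F|\geq N$. Then $x=\chi_F$ has $\|x\|_c\leq |F|$, which by itself is useless. The bound has to come from the other direction: if $c$ restricted to $B$ had no large $1$-homogeneous finite sets, we would want $\|\chi_F\|_c$ small for a large $F\subseteq B$ while $\varphi(\chi_F)>\varepsilon|F|$. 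This requires that every $1$-clique meets $F$ in at most a bounded number of points, i.e.\ we want large sets $F\subseteq B$ whose intersections with every $1$-clique are small. Erd\H{o}s--Dushnik--Miller gives $\omega_1\to(\omega_1,\omega)^2$. Since $B$ contains no uncountable $1$-homogeneous set, it contains an infinite $0$-homogeneous set $D$. Every $1$-clique meets $D$ in at most one point, so taking $F\subseteq D$ finite with $|F|=N$ gives $\|\chi_F\|_c=1$ while $\varphi(\chi_F)>N\varepsilon$. This contradicts boundedness of $\varphi$ once $N>\|\varphi\|/\varepsilon$.

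The main obstacle is the $(\Rightarrow)$ direction, specifically the Ramsey step. I would cite Erd\H{o}s--Dushnik--Miller for $\lambda\to(\lambda,\omega)^2$ applied to $\lambda=\omega_1$ on a subset of $B$ of size $\omega_1$. The hypothesis rules out the uncountable $1$-homogeneous alternative, so the infinite $0$-homogeneous set exists. Everything else is routine: the pigeonhole step and checking that the functional in the first direction is well defined.
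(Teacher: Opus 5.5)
Your proof is correct and follows the paper's argument. In one direction, the characteristic function of an uncountable $1$-monochromatic set is a functional of norm at most one with uncountable support; in the other, Erd\H{o}s--Dushnik--Miller gives an infinite $0$-monochromatic $D$ inside a set where $\varphi(e_\alpha)>\varepsilon$, and then $\|\chi_F\|_c=1$ but $\varphi(\chi_F)>N\varepsilon$ for $F\in[D]^N$. The abandoned attempt in the middle, which looks for large $F\in\mathcal{F}_c$ inside $B$, can simply be deleted.
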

\begin{proof}

	Suppose first that there is $I\subseteq \kappa$ of size $\omega_1$ which is $1$-monochromatic for $c$. Then $\varphi = \chi_I \in X^*_c$. 

	Assume now that there is $\varphi\in X_c^*$ and an uncountable set $I\subseteq \kappa$ such that $\varphi(e_\alpha)\neq 0$ for $\alpha\in I$. We may assume that $\|\varphi\|^*_c =1$. By shrinking $I$ and replacing $\varphi$ with $-\varphi$ if necessary we may also assume that there exists $\varepsilon>0$ such that $\varphi(e_\alpha)>\varepsilon$ for $\alpha\in I$.

	Since we assumed that $I$ has no uncountable $1$-monochromatic subsets, by the Erd\H{o}s--Dushnik--Miller theorem there is an infinite $0$-monochromatic $J\subseteq I$. Pick $n>\frac{1}{\varepsilon}$, take $A\subseteq J$ of size $n$ and consider
$x=\chi_A$. Then $\|x\|_c=\|x\|_\infty=1$ and $\varphi(x) > n\varepsilon > 1$, which contradicts the fact that $\|\varphi\|^*_c=1$.
\end{proof}

Let $\ef$ be a family of subsets of $\kappa$ and let $S\subs \kappa$. Then we denote
\[
    \ef | S = \ef \cap P(S).
\]
For arbitrary families, not necessarily associated to a coloring, we can prove a slightly weaker result:

\begin{prop}
	Suppose $\F \subseteq [\kappa]^{<\omega}$ is a family such that for every uncountable $S\subseteq \kappa$ the family $\F^\perp|S$ is not bounded. Then every functional from $X^*_\mathcal{F}$ has a countable support.
\end{prop}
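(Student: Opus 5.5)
We argue by contradiction, following the second half of the proof of Proposition \ref{countable-support-in-*} with the Erd\H{o}s--Dushnik--Miller step replaced by the hypothesis on $\F^\perp$. Suppose $\varphi\in X^*_\F$ has uncountable support. Normalize so that $\|\varphi\|^*=1$. The set $\{\alpha\colon \varphi(e_\alpha)\neq 0\}$ is uncountable, so by pigeonhole over the countably many sets $\{\alpha\colon |\varphi(e_\alpha)|>1/k\}$ there is $\varepsilon>0$ with uncountably many $\alpha$ satisfying $|\varphi(e_\alpha)|>\varepsilon$. Splitting by sign and replacing $\varphi$ with $-\varphi$ if needed, we get an uncountable $S\subseteq\kappa$ with $\varphi(e_\alpha)>\varepsilon$ for all $\alpha\in S$.

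Next apply the hypothesis to this $S$. Since $\F^\perp|S$ is not bounded, for any $n>1/\varepsilon$ there is $A\in\F^\perp$ with $A\subseteq S$ and $|A|= n$; we may take exactly $n$ elements because $\F^\perp$ is hereditary. Put $x=\chi_A\in c_{00}(\kappa)\subseteq X_\F$. Each $F\in\F$ satisfies $|A\cap F|\le 1$ by the definition of $\F^\perp$, so $\sum_{i\in F}|x(i)|\le 1$ for every $F\in\F$. Also $\{\alpha\}\in\F$ for $\alpha\in A$, so $\|x\|_\F=1$. On the other hand $\varphi(x)=\sum_{\alpha\in A}\varphi(e_\alpha)>n\varepsilon>1$, which contradicts $\|\varphi\|^*=1$.

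No step here is a real obstacle; the argument is direct. The only places needing care are the pigeonhole reduction to a uniform lower bound $\varepsilon$ with a fixed sign on an uncountable set, and the observation that membership of $A$ in $\F^\perp$ gives $\|\chi_A\|_\F\le 1$ (in fact $=1$). The statement also uses that $\varphi$ is identified with its values $\varphi(e_\alpha)$, as noted in the Preliminaries, and that $\varphi$ is linear on finitely supported vectors.
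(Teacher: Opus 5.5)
Your proposal is correct and is essentially the paper's proof: normalize $\varphi$, pass to an uncountable set on which $\varphi(e_\alpha)>\varepsilon$, use the unboundedness of $\F^\perp$ there to get an $n$-element $A\in\F^\perp$ with $n>1/\varepsilon$, and note that $\|\chi_A\|_\F=1$ while $\varphi(\chi_A)>1$. Your explicit justification of the pigeonhole/sign reduction and of the heredity of $\F^\perp$ fills in details the paper leaves implicit.
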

\begin{proof}
	By way of contradiction assume that there is $\varphi\in X_\F^*$ with an uncountable support and that $X^*_\ef$ satisfies condition in the statement. Let $I\subseteq \kappa$ be an uncountable subset of $\supp(\varphi)$. We may assume that $\|\varphi\|^*_\mathcal{F} =1$ and that there exists $\varepsilon>0$ such that $\varphi(e_\alpha)>\varepsilon$ for $\alpha\in I$. Now, pick $n>\frac{1}{\varepsilon}$, take any $n$-element $A\subseteq I$, $A\in \F^\perp$ and consider
    $x=\chi_A$. Then $\|x\|_\F=\|x\|_\infty=1$ and $\varphi(x) > n\varepsilon > 1$, which contradicts the fact that $\|\varphi\|^*_\F=1$.
\end{proof}

\begin{prop}\label{countable-support-in-Y}
Let $c$ be a coloring of $\kappa>\omega$ without uncountable $0$-monochro-matic sets. Then every $x\in Y_c$ has a countable support.
\end{prop}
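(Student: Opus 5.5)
We argue by contradiction, in the spirit of Proposition \ref{countable-support-in-*}, with the roles of the two colours swapped. Suppose $x\in Y_c$ has uncountable support. Then there are an uncountable $I\subseteq \kappa$ and $\varepsilon>0$ such that $|x(\alpha)|>\varepsilon$ for every $\alpha\in I$. To see this, write the support as the countable union of the sets $\{\alpha\colon |x(\alpha)|>1/n\}$; one of them must be uncountable. Shrinking $I$, we may assume $|I|=\omega_1$.

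Next we apply the Erd\H{o}s--Dushnik--Miller theorem $\omega_1\to(\omega_1,\omega)^2$ to $c$ restricted to $[I]^2$, now in the colour opposite to that of Proposition \ref{countable-support-in-*}. Either $I$ contains an uncountable $0$-monochromatic set or an infinite $1$-monochromatic set. The hypothesis excludes the first alternative, so there is an infinite $J\subseteq I$ that is $1$-monochromatic. Every finite subset of $J$ is then in $\mathcal{F}_c$.

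Choose $n>\|x\|_c/\varepsilon$, which is possible since $\|x\|_c<\infty$ because $x\in Y_c$. Take $F\subseteq J$ with $|F|=n$. Then $F\in\mathcal{F}_c$, so
\[
\|x\|_c\geq \sum_{\alpha\in F}|x(\alpha)| > n\varepsilon > \|x\|_c,
\]
a contradiction. Hence every $x\in Y_c$ has countable support.

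The only point that needs care is invoking the partition relation with the colours in the correct order: the uncountable homogeneous set must be $0$-monochromatic and the countable one $1$-monochromatic. This is legitimate because $\kappa\to(\kappa,\omega)^2$ applied on a set of size $\omega_1$ allows either colour to play either role. Apart from that the argument is routine, and unlike the dual case it needs no normalisation of a functional.
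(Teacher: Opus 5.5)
Your proof is correct and follows the paper's argument: extract an uncountable set on which $|x|$ exceeds some $\varepsilon>0$, apply Erd\H{o}s--Dushnik--Miller to obtain an infinite $1$-monochromatic subset, and contradict $\|x\|_c<\infty$. The only differences are cosmetic: you bound the norm using a finite clique of size $n>\|x\|_c/\varepsilon$ rather than summing over the whole infinite clique, and you work with $|x(\alpha)|$ instead of assuming positivity.
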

\begin{proof}
	Assume that there is $x\in Y_c$ with an uncountable support. Let $A\subs \supp(x)$ be such that for some $\varepsilon>0$ we have $x(\alpha)>\varepsilon$ for every $\alpha\in A$. By the Erd\H{o}s--Dushnik--Miller theorem there is an infinite $1$-monochromatic $B\subseteq A$. Then clearly 
    $$\|x\|_c \geq \|P_B x\|_c = \|P_B x\|_1 \geq \sum_{\alpha\in B} x(\alpha) = +\infty,$$
    which yields a contradiction.
\end{proof}

Again, we can prove in a similar manner the following:

\begin{prop}
Let $\F\subseteq [\kappa]^{<\omega}$ be such that $\F$ is not bounded on any uncountable set.
Then every $x\in Y_\F$ has a countable support.
\end{prop}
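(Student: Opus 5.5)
We argue by contradiction, exactly as in the proof of Proposition \ref{countable-support-in-Y}, with the Erd\H{o}s--Dushnik--Miller step replaced by the hypothesis that $\F$ is unbounded on every uncountable set. Suppose $x\in Y_\F$ has uncountable support. Since $\supp(x)=\bigcup_{n}\{\alpha\colon |x(\alpha)|>1/n\}$, some term of this countable union is uncountable. So there are $\varepsilon>0$ and an uncountable $A\subseteq \supp(x)$ with $|x(\alpha)|>\varepsilon$ for every $\alpha\in A$. Because the norm only involves $|x(\alpha)|$, no sign normalization is needed.

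Next we use the hypothesis. By assumption $\F|A$ is not bounded, so for every $n<\omega$ there is $F_n\in\F$ with $F_n\subseteq A$ and $|F_n|\geq n$. Then
\[
\|x\|_\F \geq \sum_{\alpha\in F_n}|x(\alpha)| > n\varepsilon
\]
for every $n$. Hence $\|x\|_\F=\infty$, contradicting $x\in Y_\F$. Therefore every $x\in Y_\F$ has countable support.

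There is no real obstacle. The only care needed is in reading ``$\F$ is not bounded on $S$'' as ``$\F|S$ is not bounded'', i.e.\ $\F$ contains arbitrarily large finite subsets of $S$. Under that reading the pigeonhole step yielding the uncountable $A$ is the only nontrivial point. In contrast to the coloring case, we do not need an infinite set in $\overline{\F}$: arbitrarily large finite sets already make the supremum infinite.
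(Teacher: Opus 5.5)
Your proof is correct and is the argument the paper intends. The paper only says the statement is proved ``in a similar manner'' to Proposition \ref{countable-support-in-Y}; your version is that proof with the Erd\H{o}s--Dushnik--Miller step replaced by the hypothesis that $\F|A$ contains arbitrarily large sets, where $A$ is the uncountable set of coordinates with $|x(\alpha)|>\varepsilon$.
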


\begin{rem}
Note that for a monochromatic family $\F\subseteq [\kappa]^{<\omega}$  the following are equivalent: 
\begin{enumerate}
	\item There is a set $S\in [\kappa]^\kappa$ such that $\F|S$ is bounded,
\item There is a 0-monochromatic set of size $\kappa$.
\end{enumerate}
Indeed, $2)\Rightarrow 1)$ is obvious, for $1)\Rightarrow 2)$ note that by the Erd\H{o}s-Dushnik-Miller theorem either there is a $0$-monochromatic $S'\subseteq S$ of size $\kappa$, since there can be no infinite set in the closure of $\F|S$.
\end{rem}

\begin{cor}\label{perfect}
Let $c$ be a perfect coloring of $\kappa > \omega$ without uncountable $1$-monochromatic sets. Then $X_c^* = Y_{1-c}$.
\end{cor}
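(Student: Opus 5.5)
We will identify each $\varphi\in X_c^*$ with the vector $(\varphi(e_\alpha))_{\alpha<\kappa}\in\mathbb{R}^\kappa$ and show that this identification is a bijection onto $Y_{1-c}$ with equivalent norms. The strategy is to reduce to the countable case, Theorem 4.6 of \cite{Perfect}, which gives $X^*_{c'}=Y_{1-c'}$ (with equivalent norms) for a perfect coloring $c'$ on $\omega$. Two structural facts make the reduction possible. First, perfectness passes to restrictions $c|S$ for every $S\subseteq\kappa$, since it is a condition on finite induced subgraphs. Second, the hypothesis that $c$ has no uncountable $1$-monochromatic set lets us invoke Proposition \ref{countable-support-in-*}: every $\varphi\in X_c^*$ has countable support.

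\textbf{Step 1: $X_c^*\subseteq Y_{1-c}$.} Take $\varphi\in X_c^*$ and let $S=\supp(\varphi)$, which is countable. Any finite $0$-monochromatic $A$ satisfies
\[ \sum_{\alpha\in A}|\varphi(\alpha)| = \sum_{\alpha\in A\cap S}|\varphi(\alpha)|. \]
It therefore suffices to bound $\|\varphi|S\|_{\mathcal{F}_{1-c}}$. The restriction $\varphi|S$ is a functional on $X_{c|S}$, which sits isometrically inside $X_c$ as the closed span of $\{e_\alpha:\alpha\in S\}$. Its norm is at most $\|\varphi\|$. By the countable theorem, $\|\varphi|S\|_{Y_{1-c|S}}\le C\|\varphi\|$. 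Then $\varphi\in Y_{1-c}$.

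\textbf{Step 2: $Y_{1-c}\subseteq X_c^*$.} Take $y\in Y_{1-c}$ and define $\varphi(x)=\sum_\alpha y(\alpha)x(\alpha)$ on $c_{00}(\kappa)$. We need $|\varphi(x)|\le C\|y\|_{1-c}\|x\|_c$, with $C$ independent of the support. Each $x\in c_{00}$ has finite support $T$, and the desired inequality involves only the restriction of $c$ to $T$, which is perfect. One can even argue directly: colour $T$ properly with $k\le\omega(c|T)$ colour classes $A_1,\dots,A_k$, which are $0$-monochromatic. Then
\[ |\varphi(x)|\le\sum_i\sum_{A_i}|y||x|. \]
To control this, use the duality given by perfectness (weighted perfect-graph LP duality, i.e.\ fractional chromatic number equals clique number for induced weighted subgraphs). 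This yields $\sum|x||y|\le\|x\|_c\|y\|_{1-c}$ on finite perfect graphs, and the constant is uniform. Alternatively, enlarge $T$ to a countable $S$ and quote the countable theorem; the important point is that its constant does not depend on the coloring, which I expect to be the case (likely even $C=1$). So $\varphi$ extends to $X_c$.

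The main obstacle is exactly this uniformity of constants: the countable theorem must give norm equivalence with a constant independent of $c$, so that it applies uniformly to all countable restrictions. If \cite{Perfect} only states set equality, I will extract the constant via the finite weighted duality above, which is intrinsically finite and uniform, or argue by contradiction. In the contradiction argument, a failure of uniform bounds yields countably many finite sets witnessing larger and larger ratios. Their union $S$ is countable, and $c|S$ is perfect, so this would contradict the countable theorem via a closed graph argument.
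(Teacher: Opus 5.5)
Your Step 1 is the paper's argument: countable support from Proposition \ref{countable-support-in-*}, then \cite[Theorem 4.6]{Perfect} applied to $c|S$. Your Step 2 is correct but takes a genuinely different route.

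The paper localizes the functional. Proposition \ref{countable-support-in-Y}, applied to $1-c$, is where the hypothesis is used a second time: it gives $\psi\in Y_{1-c}$ a countable support $B$. The countable theorem then puts $\psi$ in $X^*_{c|B}$, and composing with $P_B$ extends it to $X_c$. Since a single countable set carries $\psi$, a bare set equality in \cite{Perfect} suffices and no constants enter.

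You instead localize the test vectors $x\in c_{00}(\kappa)$, which is why you need a bound uniform over all finite restrictions. The weighted perfect-graph duality supplies it with constant $1$: this is $\mathrm{QSTAB}=\mathrm{STAB}$, i.e.\ $\{x\ge 0:\|x\|_{c|T}\le 1\}$ is the convex hull of indicators of $0$-monochromatic subsets of $T$. Your closed-graph fallback is also sound. Finitely supported witnesses of unbounded ratios all live on one countable $S$, and there the identity $Y_{1-c|S}\to X^*_{c|S}$ is bounded. The unweighted colouring of $T$ into $\omega(c|T)$ classes only gives the non-uniform bound $\omega(c|T)\|x\|_\infty\|y\|_{1-c}$; the weighted statement is what does the work.

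Your route yields more than the paper's. Step 2 never uses the absence of uncountable $1$-monochromatic sets. Moreover, $X_c^*\subseteq Y_{1-c}$ with $\|\varphi\|_{1-c}\le\|\varphi\|$ is trivial for any coloring: evaluate $\varphi$ on $\sum_{\alpha\in A}\mathrm{sgn}(\varphi(e_\alpha))e_\alpha$, which has $\|\cdot\|_c$-norm at most $1$ for every finite $0$-monochromatic $A$. So your argument shows $X_c^*=Y_{1-c}$ isometrically for every perfect coloring on every $\kappa$, without Proposition \ref{countable-support-in-*} or the countable theorem. In exchange, the paper's route is two lines given the propositions already in hand. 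It treats \cite{Perfect} as a black box and needs no knowledge of its constants or of the polyhedral theory of perfect graphs.
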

\begin{proof}
	Take $\varphi\in X_c^*$. By Proposition \ref{countable-support-in-*} it has a countable support $A\subseteq \kappa$. It allows us to view $\varphi$ as a functional on $X_d$, where $X_d$ is a separable combinatorial Banach space induced by
	$d=c|_{[A]^2}$. Since $d$ is a perfect coloring, it follows from \cite[Theorem 4.6]{Perfect} that $\varphi\in Y_{1-d}$, which implies that $\varphi\in Y_{1-c}$.

	Let now $\psi \in Y_{1-c}$. Since $1-c$ admits no uncountable $0$-monochromatic sets, by Proposition \ref{countable-support-in-Y} $\psi$ has a countable support $B\subseteq \kappa$. Let $e=c|_{[B]^2}$. Clearly we may view $\psi$ as an element of $Y_{1-e}$. Since $e$ is perfect, $\psi \in X_e^*$ and its natural extension belongs to $X_c^*$.
\end{proof}

\section{Copies of \texorpdfstring{$\ell_1(\omega_1)$}{l1(omega1)}}

Let $\mathcal{F}$ be a hereditary family of subsets of $\omega_1$. Notice that if $\overline{\mathcal{F}}$ contains an uncountable set, then $\ell_1(\omega_1)$ embeds isometrically to $X_\mathcal{F}$. In this section we will address the question if
we can reverse this implication.

Note that in the separable case we have the following theorem (see \cite{ZOO}):
\begin{prop}\label{l1-separable} Let $\mathcal{F}$ be a hereditary family on $\omega$. Then, the following conditions are equivalent:
	\begin{enumerate}
	\item $\overline{\mathcal{F}}$ contains an infinite set,
	\item $X_\mathcal{F}$ contains a copy of $\ell_1$,
	\end{enumerate}
\end{prop}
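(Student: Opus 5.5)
The implication (1)$\Rightarrow$(2) is immediate. If $A\subseteq\omega$ is infinite and $A\in\overline{\mathcal{F}}$, then every finite subset of $A$ lies in $\mathcal{F}$. For $x\in c_{00}$ supported in $A$ this gives $\|x\|_\mathcal{F}=\sum_{i\in A}|x(i)|=\|x\|_1$. Hence $(e_i)_{i\in A}$ spans an isometric copy of $\ell_1$ in $X_\mathcal{F}$.

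For (2)$\Rightarrow$(1) I argue by contraposition. Suppose $\overline{\mathcal{F}}$ contains no infinite set. As noted in the Preliminaries, $\mathcal{F}=\overline{\mathcal{F}}$ is then a compact subset of $2^\omega$ consisting of finite sets, so $K=\overline{\mathcal{F}}$ is a countable compact (scattered) space. By Proposition \ref{C(F)}, $X_\mathcal{F}$ is isomorphic to a closed subspace of $C(K)$. The plan is to invoke the classical fact that for a countable compact $K$ the space $C(K)$ is $c_0$-saturated and, more to the point, does not contain $\ell_1$. Here $C(K)^*=\ell_1(K)$ is separable, so $C(K)$ is Asplund, and every subspace of a space with separable dual has separable dual. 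Since $\ell_1^*=\ell_\infty$ is nonseparable, $\ell_1$ does not embed into $C(K)$, and hence not into $X_\mathcal{F}$.

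The step that needs the most care is justifying that $K$ is countable and that $C(K)^*=\ell_1(K)$. Countability is clear, as $[\omega]^{<\omega}$ is countable. The identification of the dual is the Riesz representation theorem: every Radon measure on a countable compact space is atomic. If one prefers to avoid measure theory, a direct alternative is to show by induction on the Cantor--Bendixson rank of $K$ that any bounded block sequence in $X_\mathcal{F}$ has a subsequence equivalent to the unit vector basis of $c_0$, and then to apply Rosenthal's $\ell_1$ theorem. I would, however, go with the duality argument, which is short given Proposition \ref{C(F)}.
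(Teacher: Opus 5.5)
Your proof is correct in both directions. The paper does not prove this proposition; it quotes it from \cite{ZOO}. The only hint of an argument is the remark after Theorem \ref{l1-general}, which reads the statement measure-theoretically. On that reading, Proposition \ref{C(F)} plus Pe\l czy\'nski's theorem ($C(K)\supseteq\ell_1$ forces $C(K)^*\supseteq L_1$) turn a copy of $\ell_1$ in $X_\mathcal{F}$ into a non-atomic measure on $\overline{\mathcal F}$. By Rudin's theorem, $\overline{\mathcal F}$ is then not scattered, hence uncountable, hence it cannot consist only of finite sets.

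You use the same embedding into $C(\overline{\mathcal F})$ but argue contrapositively, and you need only elementary facts:
\begin{enumerate}
\item a compact $\overline{\mathcal F}\subseteq[\omega]^{<\omega}$ is countable;
\item $M(K)=\ell_1(K)$ is separable for countable compact $K$;
\item separability of the dual passes to closed subspaces.
\end{enumerate}
This sidesteps both Rudin's and Pe\l czy\'nski's theorems, which is a genuine simplification.

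The price is that your argument is intrinsically countable. For $\kappa>\omega$, the analogous hypothesis ($\overline{\mathcal F}$ contains no set of size $\lambda$, e.g.\ $\overline{\mathcal F}$ Corson) neither makes $\overline{\mathcal F}$ small nor makes its measures separable. That is why the paper's generalization, Theorem \ref{l1-general}, is phrased in terms of the type of measures on $\overline{\mathcal F}$ rather than the size of its members. It is also why, by Theorem \ref{l_1}, the literal analogue of the proposition for $\omega_1$ holds exactly when $\mathrm{cov}(\mathcal N_{\omega_1})>\omega_1$.
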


As we will see this proposition may be generalized for nonseparable spaces but in a rather unexpected way. Namely, it turns out that what really matters in condition (1) is that $\overline{\mathcal{F}}$ contain a copy of a Cantor set and so it
carries a non-atomic measure.

Let $\mu$ be a Radon measure on a compact space $K$. Say that a family $\mathcal{C}$ of Borel subsets of $K$ is ($\mu$-)\emph{dense} if for every Borel $B\subseteq K$ and every $\varepsilon>0$ there is $C\in \mathcal{C}$ such that $\mu(B \triangle
C)<\varepsilon$. Notice that the family of clopen subsets of $K$ is dense. We say that $\mu$ has \emph{type} $\kappa$ if $\kappa$ is the minimal cardinality of a dense family. A measure of countable type is called \emph{separable}. The type
of a measure is the density of the space $L_1(\mu)$ (see e.g. \cite{Plebanek2024}).

For $\alpha<\kappa$ let $C^i_\alpha = \{x\in 2^{\kappa}\colon x(\alpha)=i\}$.
Let $\mathcal{F}$ be a hereditary family of subsets of $\omega_1$. For each $\alpha<\omega_1$ let $K_\alpha = \overline{\mathcal{F}}\cap C^1_\alpha$. For a finite $F\subseteq\omega_1$ we have
\begin{equation}\label{com}
	F \in \mathcal{F} \iff \bigcap_{\alpha \in F} K_\alpha \ne \emptyset. 
\end{equation}

We will need some basic lemmas from the general measure theory. If $\mu$ is a measure and $f$ is a $\mu$-measurable function, then by $\mu(f)$ we denote $\int f \ d\mu$.

\begin{lem}\label{calculus} Suppose that $\mu$ is a probability measure on $X$ and $f\colon X\to [0,\infty)$ is a $\mu$-measurable function bounded by $M$ and such that $\mu(f)=E$. Then
	\[ \mu(\{t\in X\colon f(t) \geq E/2\}) \geq E/2M. \]
\end{lem}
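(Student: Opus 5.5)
The plan is a reverse Markov (Paley--Zygmund type) estimate: split the integral of $f$ according to whether $f$ is below or above the threshold $E/2$, and bound each piece separately. Put
\[ A = \{t\in X\colon f(t)\geq E/2\}. \]
Because $f$ is $\mu$-measurable, $A$ is $\mu$-measurable. If $E=0$ then $A=X$ and the claim reads $1\geq 0$, so assume $E>0$. Then $M>0$, and $M\geq E$ since $E=\mu(f)\leq M\mu(X)=M$.

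The key step is the decomposition
\[ E=\mu(f)=\int_{X\setminus A} f\, d\mu+\int_A f\, d\mu. \]
On $X\setminus A$ we have $0\leq f<E/2$, so the first integral is at most $(E/2)\,\mu(X\setminus A)\leq E/2$, using that $\mu$ is a probability measure. On $A$ we have $f\leq M$, so the second integral is at most $M\mu(A)$. Combining these gives $E\leq E/2+M\mu(A)$, that is, $\mu(A)\geq E/2M$, which is the assertion.

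No step is a genuine obstacle. The only points needing care are the normalization $\mu(X)=1$ in the first bound and the use of nonnegativity of $f$, which guarantees that the integral over $X\setminus A$ does not become negative and spoil the inequality.
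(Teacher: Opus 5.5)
Your proof is correct and is essentially the paper's argument: split $\mu(f)$ over $\{f\geq E/2\}$ and its complement, bounding $f$ by $E/2$ on the complement and by $M$ on the set itself. The only cosmetic difference is that you bound $(E/2)\mu(X\setminus A)$ by $E/2$ right away, whereas the paper keeps the factor $1-\mu(X_0)$, solves to get $\mu(X_0)\geq (E/2)/(M-E/2)$, and then uses $M\geq M-E/2$.
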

\begin{proof} Let $X_0 = \{t\in X\colon f(t) \geq E/2\}$. Then $\mu(f) = \mu(f|X_0) + \mu(f|(X\setminus X_0))$ and so 
	\[ E \leq M\mu(X_0) + (E/2)(1-\mu(X_0)). \]
Performing rather straightforward calculations, using the fact that $M\geq M-E/2$, we get
\[ \mu(X_0) \geq E/2M. \]
\end{proof}

It is well known that in the probability space whenever $(A_n)$ is a family of sets of measure $>\delta>0$, then there is $t$ contained in infinitely many $A_n$'s. The following lemma is a strengthening of this fact: we may demand that in this
situation $t$ is contained in a 'non-summable' many $A_n$'s. 

\begin{lem}\label{measure-theory}
Suppose that $\mu$ is a probability measure on $2^\kappa$ and that $(A_n)_n$ is a family of measurable subsets of $2^{\kappa}$ such that $\mu(A_n)>\delta$ for some $\delta>0$ and every $n$. Let $x$ be a bounded sequence of positive reals such that $\sum_i x(i) = \infty$. Then there is $t\in 2^{\kappa}$ such that $\sum_{i\in B_t} x(i) = \infty$, where $B_t = \{n\colon t\in A_n\}$. 
\end{lem}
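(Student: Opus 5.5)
The plan is to reduce the statement to the classical fact quoted just before it (sets of measure bounded below have an infinite $\limsup$ of positive measure) by applying that fact to suitable \emph{level sets of weighted partial sums}, rather than to the sets $A_n$ themselves. The level sets will come from Lemma \ref{calculus}.

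For $N<\omega$ put $S_N=\sum_{n<N} x(n)$, so $S_N\to\infty$, and define the measurable function
\[ f_N(t)=\sum_{n<N} x(n)\,\chi_{A_n}(t), \qquad g_N = f_N/S_N \]
(for $N$ large enough that $S_N>0$). Since $x$ is positive, $0\le g_N\le 1$. By linearity of the integral,
\[ \mu(g_N)=\frac{1}{S_N}\sum_{n<N}x(n)\mu(A_n)\ \ge\ \delta . \]
Apply Lemma \ref{calculus} to $g_N$ with $M=1$ and $E=\mu(g_N)\ge\delta$. This gives
\[ \mu(\{t\colon g_N(t)\ge E/2\})\ge E/2\ge \delta/2 . \]
Hence the set $E_N=\{t\colon f_N(t)\ge (\delta/2) S_N\}$, which contains the set above, satisfies $\mu(E_N)\ge\delta/2$ for all large $N$.

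Next, apply the classical fact to the sequence $(E_N)_N$. Concretely, $\mu\big(\bigcup_{N\ge k}E_N\big)\ge\delta/2$ for every $k$, and these unions decrease in $k$, so $\mu(\limsup_N E_N)\ge\delta/2>0$. In particular $\limsup_N E_N$ is nonempty; pick $t$ in it. Then for infinitely many $N$,
\[ \sum_{i\in B_t} x(i)\ \ge\ \sum_{i\in B_t,\, i<N} x(i) = f_N(t)\ \ge\ \frac{\delta}{2}S_N . \]
Since $S_N\to\infty$, this forces $\sum_{i\in B_t}x(i)=\infty$, as required.

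The only delicate step is choosing the right sets to feed into the $\limsup$ argument. Applying it directly to the $A_n$ only yields $t$ lying in infinitely many $A_n$, which need not give a non-summable set of weights. Normalising the partial sums by $S_N$ so that Lemma \ref{calculus} applies with a uniform bound $M=1$ is what makes the argument work. Positivity of $x$ is used to get $g_N\ge 0$ and monotonicity of partial sums. Boundedness of $x$ is not actually needed.
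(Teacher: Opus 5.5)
Your proof is correct and takes essentially the same route as the paper: the paper applies Lemma~\ref{calculus} directly to the weighted partial sums $\sum_{i\le n}x(i)\chi_{A_i}$ with bound $M=T_n$, where you normalise to $M=1$, to get level sets of measure at least $\delta/2$, and then picks $t\in\bigcap_n\bigcup_{m>n}X_m$. Your remark that the boundedness of $x$ is never used holds for the paper's argument as well.
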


\begin{proof} Note that $1\geq \delta$. For $n\in \omega$ and $t\in 2^\kappa$ let $T_n = \sum_{i\leq n} x(i)$ and let $$S_n(t) = \sum_{i\leq B_t \cap [0, ..., n]} x(i) = \sum_{i\leq n} x(i)\chi_{A_i}(t).$$
	Fix $n > 0$. Now let $X_n = \{t\in 2^\kappa \colon S_n(t) \geq \delta T_n / 2 \}$. We have $\mu(S_n)> \delta T_n$ and $S_n(x) \leq T_n$. 
	Applying Lemma \ref{calculus} to $S_n$ we get
	\[ \mu(X_n) \geq \mu(\{t\colon S_n \geq \mu(S_n)/2\}) \geq \mu(S_n)/2T_n > \delta/2. \]
	Now let $X = \bigcap_n \bigcup_{m>n} X_m$ and notice that $\mu(X)\geq \delta/2$, in particular it is nonempty. But if $t\in X$, then $\sum_{i\in A_t} x(i) = \infty$.
\end{proof}

This is the main theorem of the section.

\begin{thm}\label{l1-general} Let $\mathcal{F}$ be a hereditary family on $\kappa$ and let $\lambda\leq \kappa$. Suppose that $cf(\lambda) > \omega$. The following conditions are equivalent:
	\begin{enumerate}
		\item The space $\overline{\mathcal{F}}$ carries a measure $\mu$ of type at least $\lambda$.
		\item The space $X_\mathcal{F}$ contains a copy of $\ell_1(\lambda)$.
	\end{enumerate}
\end{thm}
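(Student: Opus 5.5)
The plan is to pass through $C(\overline{\mathcal{F}})$ using Proposition \ref{C(F)}, and to use the classical Pełczyński–Talagrand fact that for a compact $K$ and $\operatorname{cf}(\lambda)>\omega$, $C(K)$ contains $\ell_1(\lambda)$ iff $K$ carries a Radon measure of type $\geq\lambda$. The implication (2)$\Rightarrow$(1) is then immediate. By Proposition \ref{C(F)}, $X_\mathcal{F}$ is isomorphic to a closed subspace of $C(\overline{\mathcal{F}})$, so a copy of $\ell_1(\lambda)$ in $X_\mathcal{F}$ gives one in $C(\overline{\mathcal{F}})$. The cited theorem then yields a measure of type $\geq\lambda$ on $\overline{\mathcal{F}}$. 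If one prefers not to cite it, the usual argument goes as follows: a copy of $\ell_1(\lambda)$ in $C(K)$ induces an embedding of $L_1$-type structure, giving $\ell_2(\lambda)$, or $L_1(\{0,1\}^\lambda)$, as a quotient-dual object. One takes the functionals on $\ell_1(\lambda)^*=\ell_\infty(\lambda)$ and represents them as measures on $K$.

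The real content is (1)$\Rightarrow$(2), where I want a copy of $\ell_1(\lambda)$ inside $X_\mathcal{F}$ itself, not merely inside $C(\overline{\mathcal{F}})$. Let $\mu$ have type $\geq\lambda$. The clopen algebra of $\overline{\mathcal{F}}$ is generated by the sets $K_\alpha$, so $\{K_\alpha\}_{\alpha<\kappa}$ generates a dense family. Since $\operatorname{cf}\lambda>\omega$, a standard Maharam-type argument lets me pick $\alpha_\xi$ for $\xi<\lambda$ and some $\delta>0$ such that the sets $K_{\alpha_\xi}$ are $\delta$-far from the measure algebra generated by $\{K_{\alpha_\eta}\colon \eta\neq\xi\}$. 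More usefully, I can arrange that $\mu(K_{\alpha_\xi}\cap\bigcap\ldots)$ behaves like independent sets up to $\delta$.

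Here is the key claim. There are $\delta>0$, indices $\alpha_\xi$ for $\xi<\lambda$, and pairwise disjointly supported finite blocks $u_\xi\in c_{00}(\kappa)$ with $\|u_\xi\|_\mathcal{F}\leq 1$, such that $(u_\xi)$ is equivalent to the $\ell_1(\lambda)$ basis. The natural candidate is $u_\xi=e_{\alpha_\xi}$. One needs the lower estimate $\|\sum_\xi a_\xi e_{\alpha_\xi}\|_\mathcal{F}\geq c\sum|a_\xi|$ for finitely supported $a$. It suffices to treat $a\geq0$ after splitting into signs and losing a factor $2$. For this, I integrate: $\int \sum_\xi a_\xi\chi_{K_{\alpha_\xi}}\,d\mu=\sum a_\xi\mu(K_{\alpha_\xi})\geq\delta\sum a_\xi$. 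Hence some $A\in\overline{\mathcal{F}}$ satisfies $\sum_{\alpha_\xi\in A}a_\xi\geq\delta\sum a_\xi$, and since only finitely many coordinates matter, $A\cap\{\alpha_\xi\}$ is in $\mathcal{F}$. This is exactly Lemma \ref{calculus} and Lemma \ref{measure-theory} in spirit. So it is enough to find $\lambda$ many coordinates with $\mu(K_\alpha)\geq\delta$.

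The main obstacle is that $\mu$ might concentrate near $\emptyset$, with $\mu(K_\alpha)\to0$ for most $\alpha$. Type $\geq\lambda$ alone does not immediately give $\lambda$ many $\alpha$ with $\mu(K_\alpha)\geq\delta$. For instance, $\mu$ could be a weighted sum of point masses on singletons plus small pieces. My fix is to use that type $\geq\lambda$ and $\operatorname{cf}\lambda>\omega$ imply that there are $\lambda$ many sets in the generated algebra forming a $\delta$-independent-like family, that is, the measure algebra contains a homogeneous piece of Maharam type $\geq\lambda$. Restrict and renormalise $\mu$ to a set $E$ on which it is Maharam homogeneous of type $\mu'\geq\lambda$, with $\operatorname{cf}$ considerations handling a possible decomposition. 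On $E$, the coordinates $\alpha$ with $\mu(K_\alpha\cap E)>\delta\mu(E)$ must number at least $\lambda$ for some rational $\delta$. Otherwise, by $\operatorname{cf}\lambda>\omega$, fewer than $\lambda$ coordinates carry any mass. Every other $K_\alpha\cap E$ is null, so the measure algebra would be generated by $<\lambda$ sets, contradicting type $\geq\lambda$. Applying the integration argument to $\mu|E$ normalised then yields $\|\sum a_\xi e_{\alpha_\xi}\|_\mathcal{F}\geq(\delta/2)\sum|a_\xi|$, hence $\ell_1(\lambda)\hookrightarrow X_\mathcal{F}$. The step I expect to need care is this counting argument. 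Here I would first try the simpler version directly on $\mu$: if only $<\lambda$ coordinates have $\mu(K_\alpha)>0$, the type is $<\lambda$, and $\operatorname{cf}\lambda>\omega$ gives a single $\delta=1/n$ that works for $\lambda$ many coordinates.
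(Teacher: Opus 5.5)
Your proof is correct, and its skeleton is the paper's. For (2)$\Rightarrow$(1) you embed $X_\mathcal{F}$ into $C(\overline{\mathcal{F}})$ via Proposition~\ref{C(F)} and invoke Pe\l czy\'nski's theorem. For (1)$\Rightarrow$(2) your final counting argument is exactly the paper's. If for every $n$ fewer than $\lambda$ coordinates had $\mu(K_\alpha)>1/n$, then by $\operatorname{cf}(\lambda)>\omega$ fewer than $\lambda$ of the $K_\alpha$ would be non-null. The clopen algebra they generate would then be dense, and the type would be $<\lambda$. After this, $(e_\alpha)_{\alpha\in I}$ is shown to be an $\ell_1(\lambda)$-basis.

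The genuine difference is the lower estimate. The paper argues by contradiction. It glues disjoint finite blocks into a positive $x$ supported in $I$ with $\|x\|_\mathcal{F}<\infty=\|x\|_1$. It then uses Lemma~\ref{measure-theory} (built on Lemma~\ref{calculus}) to find a point of $\overline{\mathcal{F}}$ carrying a divergent part of $x$. Your averaging argument goes as follows:
\begin{enumerate}
\item normalise $\mu$ and note $\int\sum_\xi|a_\xi|\chi_{K_{\alpha_\xi}}\,d\mu\geq\delta\sum_\xi|a_\xi|$;
\item pick $A\in\overline{\mathcal{F}}$ where the integrand is at least its mean;
\item use that the finite set $A\cap\{\alpha_\xi\colon a_\xi\neq 0\}$ lies in $\mathcal{F}$.
\end{enumerate}
This is shorter and needs neither lemma nor the block construction. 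It also gives the explicit bound $\delta\|a\|_1\leq\|\sum_\xi a_\xi e_{\alpha_\xi}\|_\mathcal{F}\leq\|a\|_1$. No sign splitting or factor $2$ is needed, since $\|\cdot\|_\mathcal{F}$ only depends on $|x|$.

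Some corrections to the surrounding text:
\begin{itemize}
\item The Maharam-homogeneous/independence detour is superfluous, and the ``obstacle'' you describe is not one. Your own counting argument directly yields $\lambda$ many $K_\alpha$ of measure $>\delta$, with no need to restrict to a homogeneous piece.
\item More importantly, the ``iff'' you quote for $C(K)$ is a ZFC theorem only in the direction you actually use: $\ell_1(\lambda)\hookrightarrow C(K)$ gives a measure of type $\geq\lambda$ on $K$. The converse fails for general compacta in some models, for instance for $\lambda=\omega_1$ under CH. That is precisely why the direct argument on $\overline{\mathcal{F}}$ is the substance of the theorem.
\item Your optional ``direct'' sketch of (2)$\Rightarrow$(1) is not a proof as written, but the citation suffices.
\end{itemize}
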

\begin{proof}
	(1) $\implies$ (2). Let $K_\alpha$ be defined as at the beginning of the section. Then there exists a number $\delta > 0$ such that 
    $|\{\alpha <\kappa: \mu(K_\alpha) >\delta\}| \geq \lambda.$
    Suppose this is not the case, then for every $n$ the set $\{\alpha < \kappa: \mu(K_\alpha) > 1/n\}$ has cardinality smaller than $\lambda$. By uncountable cofinality of $\lambda$ we get that $|\{\alpha <\kappa: \mu(K_\alpha) > 0\}| < \lambda$. Hence, the family of clopen subsets of $\overline{\mathcal{F}}$ whose coordinates depend on $N$ is dense and so $\mu$ is of type less than $\lambda$. 
    We claim that the basic sequence $(e_\alpha)_{\alpha \in I}$ in $X_\mathcal{F}$ is equivalent to the standard basis of $\ell_1(\lambda)$. Suppose not. Then we have the following:
	\medskip

	\textbf{Claim.} There is a (necessarily bounded) positive $x$ with the support in $I$ such that $\|x\|_\mathcal{F}<\infty$ but $\|x\|_1= \infty$. 
	\medskip

	Indeed, for each $k\in \omega$ we may find $x_k$ such that $\|x_k\|_\mathcal{F}<1/2^k$, $\|x_k\|_1 = 1$, and the support of $x_k$ is a finite subset of $I$. We may also assume that the supports of $x_k$ are pairwise disjoint. Then $x = \sum_k
	x_k$ is as required.
	\medskip

    By Lemma \ref{measure-theory} we may find $M\subseteq \mathrm{supp}(x)$, $M \in \overline{\mathcal{F}}$ such that $\sum_{i\in M} x(i) = \infty$. But it would mean that $\| x\|_\mathcal{F} = \infty$, a contradiction.
	\medskip

	(2) $\implies$ (1). It follows directly from Proposition \ref{C(F)} and a result due to Pe\l czynski (\cite{Pelczynski68}, see also \cite{Plebanek02}) saying that if a Banach space $X$ contains a copy of $\ell_1(\lambda)$, then $X^*$ contains a copy of
	$L_1(\mu_\lambda)$, where $\mu_\lambda$ is
	the standard measure on $2^\lambda$. However, we will show a direct argument here. Suppose that
	$X_\mathcal{F}$ contains a copy of $\ell_1(\lambda)$. Then there is
	an injective operator $S\colon \ell_1(\lambda) \to C(K)$, where $K = \overline{\mathcal{F}}$ as $C(K)$ contains a closed copy of $X_\mathcal{F}$. So, there is a continuous surjective operator $T\colon C^*(K) \to \ell_1^*(\lambda)$. Thus, there is a
	continuous surjective operator $T\colon M(K) \to \ell_\infty(\lambda)$; here $M(K)$ stands for the space of signed, Radon measures on $K$. The operator
	is defined by $T(\mu)(x) = \int S(x) \ d\mu$. Let $\varphi$ be an  element of $\ell_\infty(\lambda)$ constantly equal 1 and let $\mu$ be such that $T(\mu) = \varphi$. 

	We will show that $\mu$ is of type $\lambda$.  Let $\alpha \ne \beta$. Then  \[ \| S(e_\alpha - e_\beta) \|_{L_1(\mu)} = \int | S(e_\alpha - e_\beta) |
	d\mu = \varphi(|e_\alpha - e_\beta|) = 2 . \]

\end{proof}

\begin{rem}
	Let us comment on the connection between Proposition \ref{l1-separable} and Theorem \ref{l1-general}. In the separable case we have that $\overline{\mathcal{F}}$ contains an infinite set if and only if it is not scattered. By theorem
	of Rudin a compact space is scattered if and only if it does not carry a non-atomic measure. So, we might as well formulate the condition (1) in Proposition \ref{l1-separable} as ``$\overline{\mathcal{F}}$ carries a measure of infinite type''
	thus seeing Proposition \ref{l1-separable} as a particular case of Theorem \ref{l1-general}.
\end{rem}

We say that a Banach space is \emph{weakly Lindel\"of determined}, in short WLD, if there is a linearly dense set $D\subseteq X_\mathcal{F}$ such that for each $\varphi \in X^*$ the set
$$\{ x\in D:\ \varphi(x)\neq 0\}$$ is countable.

For the case $\kappa = \omega_1$ Theorem \ref{l1-general} translates into the following:

\begin{cor} Let $\mathcal{F}$ be a hereditary family on $\omega_1$. Then the space $X_\mathcal{F}$ contains a copy of $\ell_1(\omega_1)$ if and only if the space $\overline{\mathcal{F}}$ carries a non-separable measure $\mu$.
\end{cor}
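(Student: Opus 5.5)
The plan is to specialize Theorem \ref{l1-general} to $\kappa=\lambda=\omega_1$. Since $\cf(\omega_1)=\omega_1>\omega$, the hypothesis of the theorem is satisfied. The theorem then says that $X_\mathcal{F}$ contains a copy of $\ell_1(\omega_1)$ if and only if $\overline{\mathcal{F}}$ carries a measure of type at least $\omega_1$. What remains is to identify ``type at least $\omega_1$'' with ``non-separable'' on this particular compact space.

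By definition a measure is separable exactly when its type is countable, so a non-separable measure has type at least $\omega_1$. For the converse I need that every Radon measure on $\overline{\mathcal{F}}\subseteq 2^{\omega_1}$ has type at most $\omega_1$, so that ``type $\geq\omega_1$'' is not a strictly stronger requirement. To see this, note that the clopen subsets of $\overline{\mathcal{F}}$ form a dense family, as remarked before the definition of type. Every clopen subset of $\overline{\mathcal{F}}$ is the trace of a clopen subset of $2^{\omega_1}$, and the latter depend on finitely many coordinates. There are only $\omega_1$ such sets, so the type is at most $\omega_1$. Hence the type is exactly $\omega_1$ precisely when the measure is non-separable.

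Combining the two paragraphs gives both directions of the corollary. If $\overline{\mathcal{F}}$ carries a non-separable measure, then (1) of Theorem \ref{l1-general} holds with $\lambda=\omega_1$, so $X_\mathcal{F}$ contains $\ell_1(\omega_1)$. Conversely, a copy of $\ell_1(\omega_1)$ yields, via (2)$\Rightarrow$(1), a measure of type at least $\omega_1$, which is non-separable.

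There is no real obstacle here. The only point needing a moment's care is the bound on the type via the clopen algebra, and even that is only needed to read ``type at least $\omega_1$'' as an equality.
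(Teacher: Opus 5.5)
Your proposal is correct and follows the paper's route exactly: the corollary is Theorem \ref{l1-general} with $\kappa=\lambda=\omega_1$, which is allowed since $\cf(\omega_1)>\omega$. The bound $\text{type}\leq\omega_1$ via the clopen algebra is true but not needed, because ``type at least $\omega_1$'' already means ``type uncountable'', i.e.\ non-separable, by definition.
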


In fact, for the case $\kappa = \omega_1$ we can gain a broader perspective, thanks to the work of Russo and Hajek, and Arygors and Mercourakis. Following \cite{RussoHajek} we say that Pt\'ak's lemma holds true for $\kappa$ if for every set $S\in [\kappa]^\kappa$ and every hereditary family $\F \subseteq [S]^{<\omega}$ such that
$$ \delta = \inf\{ \|x\|_\F :\ x\in c_{00}(S), x\geq 0, \|x\|_1=1 \} >0 $$
there is a set of size $\kappa$ in $\overline{\F}$.

Let $\lambda$ be the standard product measure on $2^{\omega_1}$ and let $\mathcal{N}_{\omega_1}$ be the $\sigma$-ideal of $\lambda$-null sets. Recall that $\mathrm{cov}(\mathcal{I})$, for an ideal $\mathcal{I}$ on $X$, is the minimal cardinality of a
covering of $X$ into sets from $\mathcal{I}$.

\begin{thm}\label{l_1} Let $\mathcal{F}$ be a hereditary family of subsets of $\omega_1$. The following are equivalent:
	\begin{enumerate}
	\item $\mathrm{cov}(\mathcal{N}_{\omega_1}) > \omega_1$,
	\item $\mathcal{F}$ is Corson iff $X_\mathcal{F}$ does not contain an isomorphic copy of $\ell_1(\omega_1)$, 
    \item $\mathcal{F}$ is Corson iff $X_\mathcal{F}$ is  WLD,
    \item Pt\'ak's lemma holds true for $\omega_1$.
	\end{enumerate}
\end{thm}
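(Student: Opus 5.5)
The statement should be read with ``for every hereditary family $\mathcal{F}$'' quantified inside (2)--(4): the equivalences concern the set-theoretic hypothesis, not a fixed $\mathcal{F}$. In each of (2) and (3), one direction holds in ZFC. If $\mathcal{F}$ is not Corson, some $I\in\overline{\mathcal{F}}$ is uncountable, and $(e_\alpha)_{\alpha\in I}$ is isometrically the $\ell_1(\omega_1)$ basis. Also, a space containing $\ell_1(\omega_1)$ is not WLD, since WLD passes to subspaces and $\ell_1(\omega_1)$ is not WLD. Conversely, if $X_\mathcal{F}$ is WLD then it contains no $\ell_1(\omega_1)$, because $\ell_1(\omega_1)$ is not a subspace of a WLD space. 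Hence in (2) and (3) the content is the implication ``Corson $\Rightarrow$ no copy of $\ell_1(\omega_1)$'', respectively ``Corson $\Rightarrow$ WLD''.

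The plan is to translate everything through the Corollary after Theorem \ref{l1-general}: $X_\mathcal{F}\supseteq\ell_1(\omega_1)$ iff $\overline{\mathcal{F}}$ carries a non-separable measure. Recall also that $C(K)$ is WLD iff $K$ is Corson and every Radon measure on $K$ is separable.

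\textbf{(1)$\Rightarrow$(2).} Suppose $\mathcal{F}$ is Corson and $\mu$ is a non-separable measure on $K=\overline{\mathcal{F}}$. By Maharam theory we may pass to a measure of type $\omega_1$ and obtain an inverse-measure-preserving map from $K$ onto $2^{\omega_1}$ up to null sets. Since $\mathrm{cov}(\mathcal{N}_{\omega_1})>\omega_1$, the standard Fremlin/Kunen argument gives that Corson compacta carry only separable measures. So $\mu$ cannot exist, and the Corollary yields no copy of $\ell_1(\omega_1)$.

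\textbf{(1)$\Rightarrow$(3).} Use the same fact together with the characterization above. $X_\mathcal{F}$ is a subspace of $C(K)$ by Proposition \ref{C(F)}, and WLD is hereditary, so $X_\mathcal{F}$ is WLD. One could alternatively verify WLD directly with $D=\{e_\alpha\}$: this requires that every $\varphi\in X^*_\mathcal{F}$ has countable support. That in turn follows from separability of the representing measure on $K$, whose restriction to $\{K_\alpha\}$ must vanish off countably many $\alpha$.

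\textbf{(2)$\Rightarrow$(1) and (3)$\Rightarrow$(1).} Assume $\mathrm{cov}(\mathcal{N}_{\omega_1})=\omega_1$ and cover $2^{\omega_1}$ by null sets $N_\xi$, $\xi<\omega_1$. The goal is a Corson $\mathcal{F}$ whose closure carries a non-separable measure. Refine the cover to an increasing sequence of closed null sets. Then define $\mathcal{F}$ as the family of finite $F\subseteq\omega_1\times\omega$ (reindexed to $\omega_1$) witnessing membership patterns so that $\overline{\mathcal{F}}$ maps continuously onto a set of positive measure in $2^{\omega_1}$ while every element of the closure has countable support. This is in the spirit of Kunen's/Plebanek's Corson compact with a non-separable measure, built from a cov-witness. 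By the Corollary, $X_\mathcal{F}$ then contains $\ell_1(\omega_1)$, hence is not WLD, so both (2) and (3) fail.

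\textbf{(4)$\Leftrightarrow$(1).} This is the Russo--H\'ajek / Argyros--Mercourakis result, and I would cite it. To connect it to the paper's tools, relate Pt\'ak's condition $\delta>0$ on $S$ to $(e_\alpha)_{\alpha\in S}$ being equivalent to the $\ell_1$ basis; this is essentially the Claim in the proof of Theorem \ref{l1-general}. Pt\'ak's lemma then says exactly that an $\ell_1(\omega_1)$ basis $\{e_\alpha\}$ forces an uncountable set in $\overline{\mathcal{F}}$, so (4) amounts to the equivalence in (2) restricted to subsequences of the basis. To close the loop, note that any copy of $\ell_1(\omega_1)$ yields, by the proof of Theorem \ref{l1-general}, a non-separable measure. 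That measure gives $\delta$ uniformly on the uncountable set $\{\alpha:\mu(K_\alpha)>\delta\}$, so the basis restricted there is $\ell_1$, and (4) implies (2).

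The main obstacle is the construction in (2)$\Rightarrow$(1): a combinatorial Corson family with a non-separable measure from $\mathrm{cov}(\mathcal{N}_{\omega_1})=\omega_1$. I would try to import the known Corson-compact example and show its clopen structure is generated by a hereditary family of finite sets.
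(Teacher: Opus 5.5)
The genuine gap is in (2)$\Rightarrow$(1), and with it (3)$\Rightarrow$(1). You never define $\mathcal{F}$, and the one concrete step you propose cannot be carried out. A cover of $2^{\omega_1}$ by $\omega_1$ null sets cannot in general be refined to a cover by closed null sets. Closed null sets are nowhere dense, and $\mathrm{cov}(\mathcal{N}_{\omega_1})=\omega_1$ does not make $2^{\omega_1}$ a union of $\omega_1$ nowhere dense sets (in the Cohen model the former holds and the latter fails). Inner regularity has to be used in the opposite direction, on the complements. You also do not need to realize a known Corson compactum as $\overline{\mathcal F}$, nor even to produce a measure on $\overline{\mathcal F}$.

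Here is the missing construction, which is the paper's.
\begin{itemize}
\item[] \textbf{Definition.} Let $(N_\xi)_{\xi<\omega_1}$ be $\lambda$-null sets covering $2^{\omega_1}$. Since $\bigcup_{\xi<\alpha}N_\xi$ is null, there is a compact $K_\alpha\subseteq 2^{\omega_1}\setminus\bigcup_{\xi<\alpha}N_\xi$ with $\lambda(K_\alpha)>1/2$. Put $\mathcal F=\{I\in[\omega_1]^{<\omega}\colon \bigcap_{\alpha\in I}K_\alpha\neq\emptyset\}$.
\item[] \textbf{Corson.} Suppose an uncountable $A$ had $[A]^{<\omega}\subseteq\mathcal F$. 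Compactness gives $t\in\bigcap_{\alpha\in A}K_\alpha$. But $t\in N_\xi$ for some $\xi$, so $t\notin K_\alpha$ for every $\alpha>\xi$, a contradiction.
\item[] \textbf{Copy of $\ell_1(\omega_1)$.} Suppose $(e_\alpha)_{\alpha<\omega_1}$ were not equivalent to the unit vector basis of $\ell_1(\omega_1)$. The Claim in the proof of Theorem \ref{l1-general} gives a bounded $x\geq 0$ with countable support, $\|x\|_{\mathcal F}<\infty$ and $\|x\|_1=\infty$. Lemma \ref{measure-theory}, applied to $\lambda$ and the sets $K_i$ for $i\in\mathrm{supp}(x)$, gives $t$ with $\sum_{i\in B_t}x(i)=\infty$, where $B_t=\{i\colon t\in K_i\}$. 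Every finite subset of $B_t$ lies in $\mathcal F$, so $\|x\|_{\mathcal F}=\infty$, a contradiction.
\end{itemize}
So $\mathcal F$ is Corson while $X_{\mathcal F}$ contains $\ell_1(\omega_1)$, hence (2) fails. Since $X_{\mathcal F}$ is then not WLD, (3) fails as well.

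The rest of your outline is fine. Your (1)$\Rightarrow$(2) is the paper's argument (Theorem \ref{l1-general} plus Kunen--van Mill). Handling (4) through the proof of Theorem \ref{l1-general} instead of Drewnowski's theorem also works.

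One correction for (1)$\Rightarrow$(3): the characterization is that $C(K)$ is WLD iff $K$ is Corson and every Radon measure on $K$ has \emph{separable support} (property (M)). Countable Maharam type is not enough. Countable type alone does not force $\nu(\overline{\mathcal F}\cap C^1_\alpha)=0$ for all but countably many $\alpha$. For a counterexample, run the construction above inside $2^\omega$ under $\mathrm{cov}(\mathcal N)=\omega_1$ and push Lebesgue measure forward along $t\mapsto B_t$. So your ``direct'' variant needs separable support. Under (1), Corson compacta do have property (M), so your route survives with the correct citation. The paper instead takes (2)$\Leftrightarrow$(3) from \cite[Theorem 1.7]{Argyros93}.
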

\begin{proof} (2) $\implies$ (1). Let $(N_\alpha)_{\alpha<\omega_1}$ be a family of subsets of $\{0,1\}^{\omega_1}$ such that $\lambda(N_\alpha)=0$ and $\bigcup_{\alpha<\omega_1} N_\alpha
	= 2^{\omega_1}$. For each $\alpha<\omega_1$ choose a compact set $K_\alpha \subseteq 2^{\omega_1}\setminus \bigcup_{\xi < \alpha} N_\xi$ whose measure is bigger than 1/2. Define $\mathcal{F} = \{I\in [\omega_1]^{<\omega}\colon
	\bigcap_{\alpha \in I} K_\alpha \ne \emptyset \}$. Then $\mathcal{F}$ is Corson, yet by the same argument argument as in the proof of Theorem \ref{l1-general} it is isomorphic to $\ell_1(\omega_1)$. 
	\medskip

	(1) $\implies$ (2). It follows by Theorem \ref{l1-general} and by a theorem of Kunen and van Mill (\cite{KunenMill}) saying that a Corson compact space $K$ carries a non-separable measure if and only if $\mathrm{cov}(\mathcal{N}_{\omega_1}) = \omega_1$;
	\medskip

	(2) $\iff$ (3) is proved in \cite[Theorem 1.7]{Argyros93}	
	\medskip
	
	(2) $\implies$ (4): Let $\F$ be a family such that 
	$$ \delta = \inf\{ \|x\|_\F :\ x\in c_{00}(\omega_1), x\geq 0, \|x\|_1=1\} >0.$$ Then for every $x\in c_{00}(\kappa)$, $x\geq 0$ we have
	$$\delta\|x\|_1 \leq \|x\|_\F \leq \|x\|_1,$$
	and consequently, since the standard basis of $X_\F$ is unconditional with constant 1, we have it for every $x\in c_{00}(\kappa)$. Therefore $\|\cdot\|_\F$ and $\| \cdot\|_1$ are equivalent and by (3) $\F$ is not Corson, so there is an uncountable set in $\overline{\F}$.
	\medskip
	
	(4)$\implies$ (2): Suppose there is a copy of $\ell_1(\omega_1)$ in $X_\F$. Then by a result of Drewnowski (see \cite{Drewnowski}) there is an uncountable set $\Gamma\subseteq \omega_1$ such that $(e_\alpha)_{\alpha\in \Gamma}$ is equivalent to the standard basis of $\ell_1(\Gamma)$. It follows that clearly $\F|\Gamma$ satisfies 
	$$\inf\{ \|x\|_\F :\ x\in c_{00}(\Gamma), x\geq 0, \|x\|_1=1\} >0$$
	and therefore, since Pt\'ak's lemma holds true for $\omega_1$, $\F$ is not Corson.
\end{proof}

It turns out that the monochromatic families behave in a much more regular way.

\begin{prop} If $\mathcal{F}$ is monochromatic, then the following are equivalent:
	\begin{enumerate}
	\item $\mathcal{F}$ is Corson,
	\item $\ell_1(\omega_1)$ does not embed to $X_\mathcal{F}$.
	\end{enumerate}
\end{prop}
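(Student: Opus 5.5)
We prove the two implications separately. The direction $(2)\implies(1)$ (in contrapositive form: not Corson implies $\ell_1(\omega_1)$ embeds) holds for every hereditary family. If $\overline{\mathcal{F}}$ contains an uncountable set $I$, then every finite subset of $I$ lies in $\mathcal{F}$. Hence for $x\in c_{00}(I)$ we get $\|x\|_\mathcal{F}=\|x\|_1$, so $(e_\alpha)_{\alpha\in I}$ spans an isometric copy of $\ell_1(\omega_1)$. This is the remark opening this section.

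For $(1)\implies(2)$, assume that $\mathcal{F}=\mathcal{F}_c$ is Corson, i.e. $c$ has no uncountable $1$-monochromatic set, and suppose towards a contradiction that $\ell_1(\omega_1)$ embeds into $X_c$. The first step uses Drewnowski's result, as in the proof of Theorem \ref{l_1}, to obtain an uncountable $\Gamma\subseteq\omega_1$ and $\delta>0$ such that $\|x\|_c\geq\delta\|x\|_1$ for all $x\in c_{00}(\Gamma)$.

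The second step applies the Erd\H{o}s--Dushnik--Miller theorem $\omega_1\to(\omega_1,\omega)^2$ to $c|[\Gamma]^2$. Since there is no uncountable $1$-monochromatic subset of $\Gamma$, we get an infinite $0$-monochromatic $J\subseteq\Gamma$. Every $F\in\mathcal{F}_c$ meets $J$ in at most one point. Take $n>1/\delta$ and an $n$-element $A\subseteq J$, and put $x=\chi_A$. Then $\|x\|_c=1$ while $\|x\|_1=n$, so $\|x\|_c<\delta\|x\|_1$, which is a contradiction. This is the same argument as in Proposition \ref{countable-support-in-*}.

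The only nontrivial ingredient is the reduction from an arbitrary isomorphic copy of $\ell_1(\omega_1)$ to a subsequence of the canonical basis, and we expect this to be the main obstacle. We handle it by quoting Drewnowski's theorem for spaces with an unconditional basis, exactly as in the proof of $(4)\implies(2)$ in Theorem \ref{l_1}. If one wanted to avoid that theorem, one could instead use Theorem \ref{l1-general}: it would give a non-separable measure $\mu$ on $\overline{\mathcal{F}_c}$ and uncountably many $\alpha$ with $\mu(K_\alpha)>\delta$. From the Erd\H{o}s--Dushnik--Miller theorem one would get infinitely many of these $K_\alpha$ that are pairwise disjoint, since $c=0$ on pairs means $K_\alpha\cap K_\beta=\emptyset$ by \eqref{com}. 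That is impossible for sets of measure $>\delta$ in a probability space (after normalizing $\mu$). Either route finishes the proof, and the second one is self-contained given Theorem \ref{l1-general}.
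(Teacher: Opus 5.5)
Your argument is correct, but you reach the key combinatorial step by a different reduction than the paper does.

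The paper goes through weak Lindel\"ofness. If $\ell_1(\omega_1)$ embeds into $X_\mathcal{F}$, then $X_\mathcal{F}$ is not WLD. Since $\{e_\alpha\}$ is linearly dense, some $\varphi\in X^*_\mathcal{F}$ then has uncountable support, and Proposition \ref{countable-support-in-*} (Erd\H{o}s--Dushnik--Miller plus the test vectors $\chi_A$) produces an uncountable $1$-monochromatic set.

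You instead use Drewnowski's theorem to get an uncountable $\Gamma$ on which the basis is $\ell_1$-equivalent. You then run the same Erd\H{o}s--Dushnik--Miller/$\chi_A$ computation against the lower estimate $\|x\|_c\geq\delta\|x\|_1$.

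The two routes actually meet. For your $\Gamma$, the functional $\chi_\Gamma$ satisfies $|\chi_\Gamma(x)|\leq\|P_\Gamma x\|_1\leq\delta^{-1}\|x\|_\mathcal{F}$. So it is an element of $X^*_\mathcal{F}$ with uncountable support, and your second step is then exactly the proof of Proposition \ref{countable-support-in-*}.

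The difference therefore lies only in which external result turns an abstract copy of $\ell_1(\omega_1)$ into information about coordinates. The paper needs the ZFC fact that a space containing $\ell_1(\omega_1)$ cannot be WLD: WLD passes to closed subspaces and $\ell_1(\omega_1)$ is not WLD. This is the fact really used, rather than the equivalences of Theorem \ref{l_1}. You need Drewnowski's theorem, which gives stronger structural information (an $\ell_1$-subfamily of the basis itself) and keeps the argument at the level of norm estimates.

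Your measure-theoretic alternative is also sound and is the most intrinsic to Section 4: infinitely many pairwise disjoint sets $K_\alpha$, each of measure $>\delta$, is absurd. However, it depends on the implication (2)$\implies$(1) of Theorem \ref{l1-general}, i.e.\ ultimately on Pe\l czy\'nski's theorem. So it is not really lighter than the other two routes.
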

\begin{proof}
	(2) $\implies$ (1) is obvious.
\medskip
	
(1) $\implies$ (2). Suppose that $X_\mathcal{F}$ contains an isomorphic copy of $\ell_1(\omega_1)$. By Theorem \ref{l_1} the space $X_\mathcal F$ is not WLD. Since the set $\{e_\alpha:\ \alpha<\omega_1\}$ is linearly dense in $X_\mathcal F$ and $X_\mathcal F$ is not WLD, it means there is a functional $\varphi\in X_\mathcal F^*$
with an uncountable support. By Proposition \ref{countable-support-in-*} this means that there is an uncountable $B\subseteq \kappa$ with $[B]^2\subseteq \mathcal F$ and so $\mathcal{F}$ is not Corson.
\end{proof}

Let us comment what happens in the case when $\kappa>\omega_1$. Recall that $\kappa$ is a \emph{caliber of measure algebras} if for every family $(A_\alpha)_{\alpha<\kappa}$ of $\mu_\kappa$-positive subsets of $2^\kappa$, there is $x\in 2^\kappa$ such that the set
$\{\alpha\colon x\in A_\alpha\}$ is of size $\kappa$. Notice that the family used in the first part of the proof of Theorem \ref{l_1} witnessed that, consistently,
$\omega_1$ is not a caliber of measure algebras. The following theorem is the main result in \cite{Plebanek97}:

\begin{thm}(Plebanek) If $\kappa \geq \mathrm{cf}(\kappa)>\omega_1$, the following are equivalent for a compact space $K$:
	\begin{enumerate}
		\item if $K$ carries a measure of type $\kappa$, then $K$ can be mapped continuously onto $[0,1]^\kappa$.
	\item $\kappa$ is a caliber of measure algebras. 
	\end{enumerate}
\end{thm}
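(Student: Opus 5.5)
The statement is Plebanek's theorem, quoted from \cite{Plebanek97}, so I only outline how I would reconstruct its proof. The key tool for both directions is a dyadic criterion: $K$ maps continuously onto $[0,1]^\kappa$ if and only if $K$ maps onto $2^\kappa$. This holds if and only if there is a family $(K^0_\alpha,K^1_\alpha)_{\alpha<\kappa}$ of pairs of disjoint closed sets which is \emph{independent}, that is, $\bigcap_{\alpha\in s} K^{\varepsilon(\alpha)}_\alpha\neq\emptyset$ for every finite $s\subseteq\kappa$ and every $\varepsilon\colon s\to 2$. Measures will be used to produce such families, and the caliber will be used to pass from ``independent in measure'' to ``independent as closed sets''.

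(2) $\implies$ (1). Let $\mu$ have type $\kappa$ on $K$.
\begin{enumerate}
\item By Maharam's theorem, using $\mathrm{cf}(\kappa)>\omega$, I restrict $\mu$ to a positive set where its measure algebra contains a homogeneous part of type $\kappa$. There I pick $\kappa$ stochastically independent Borel sets $A_\alpha$ of measure $1/2$.
\item By inner regularity I choose closed sets $K^0_\alpha\subseteq A_\alpha$ and $K^1_\alpha\subseteq K\setminus A_\alpha$, each of measure greater than $1/2-\eta$ for a small fixed $\eta$.
\item Fix a finite pattern length $n$. By independence of the $A_\alpha$, for distinct $\alpha_1,\dots,\alpha_n$ and any $\varepsilon$ the intersection $\bigcap_i K^{\varepsilon_i}_{\alpha_i}$ has measure close to $2^{-n}$, in particular positive. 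Approximation errors accumulate, so this is exact only for $n$ small relative to $\eta$.
\item To get all patterns at once, I pass to the product measure $\mu^{\omega}$ on $K^\omega$. For $\alpha<\kappa$ I let $E_\alpha\subseteq K^\omega$ be the set of sequences $(t_j)_j$ such that, for each $j$ in a fixed enumeration of finite patterns, the coordinate $t_j$ lies in the closed set prescribed by that pattern at $\alpha$. I arrange the $\eta$'s to decrease along $j$ so that each $E_\alpha$ is closed and $\mu^\omega$-positive.
\item Since $\mu^\omega$ still embeds into a measure algebra of type $\kappa$, the caliber hypothesis gives a point $t$ lying in $E_\alpha$ for $\kappa$ many $\alpha$. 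The coordinates of $t$ then witness nonemptiness of every finite Boolean combination along that subfamily, so the subfamily is independent. The dyadic criterion finishes this direction.
\end{enumerate}
Step 4 is the main obstacle: the coding into $E_\alpha$ must make the caliber point witness every finite pattern. That forces each coordinate of $t$ to serve one pattern uniformly in $\alpha$, which I handle by indexing coordinates by pairs (pattern, position in a $\Delta$-system-like ordering of the chosen $\alpha$'s). I expect $\mathrm{cf}(\kappa)>\omega_1$ to enter here: it lets me thin out $\kappa$-sized families to ones where countably many such choices stabilize.

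(1) $\implies$ (2). I argue by contraposition, mimicking the first part of the proof of Theorem \ref{l_1}.
\begin{enumerate}
\item If $\kappa$ is not a caliber of measure algebras, take closed sets $K_\alpha\subseteq 2^\kappa$ with $\mu_\kappa(K_\alpha)\geq\delta$ such that no point lies in $\kappa$ many of them. The uniform lower bound $\delta$ is obtained by a pigeonhole argument, using $\mathrm{cf}(\kappa)>\omega$.
\item Let $\mathcal{F}=\{I\in[\kappa]^{<\omega}\colon \bigcap_{\alpha\in I}K_\alpha\neq\emptyset\}$ and $K=\overline{\mathcal{F}}$.
\item The argument of Theorem \ref{l1-general}, with Lemma \ref{measure-theory} replaced by the image of $\mu_\kappa$ under $x\mapsto\{\alpha\colon x\in K_\alpha\}$, gives a measure on $K$ of type $\kappa$.
\item On the other hand, $K$ is $\kappa$-Corson: a set of size $\kappa$ in $K$ would, by compactness, yield a point lying in $\kappa$ many $K_\alpha$, which is impossible.
\item Finally, a $\kappa$-Corson compact space cannot map onto $[0,1]^\kappa$. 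I would prove this by pulling back an independent family of size $\kappa$ and using compactness together with $\mathrm{cf}(\kappa)>\omega_1$ to find a point of $K$ with support of size $\kappa$.
\end{enumerate}
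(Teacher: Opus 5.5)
The paper gives no proof of this theorem. It quotes it from Plebanek and only remarks that, when $\kappa$ is not a caliber, Plebanek's argument produces a $\kappa$-Corson compact space carrying a measure of type $\kappa$. Your outline of (1)$\implies$(2) is essentially that remark, combined with the fact that $\kappa$-Corson compacta do not map onto $[0,1]^\kappa$. It is acceptable in outline, though step 5 is only sketched. For step 3 the safest route is the averaging inequality $\int\sum_{\alpha\in s}a_\alpha\chi_{K_\alpha}\,d\mu_\kappa\geq\delta\sum_{\alpha\in s}a_\alpha$ for $a_\alpha\geq 0$. It gives $\ell_1(\kappa)\hookrightarrow X_{\mathcal F}$, and Theorem \ref{l1-general} then supplies the measure.

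There is also a small slip at the start. Mapping onto $[0,1]^\kappa$ is not equivalent to mapping onto $2^\kappa$: take $K=[0,1]^\kappa$, which is connected. It is equivalent to some closed subspace of $K$ mapping onto $2^\kappa$, i.e.\ to the independent-family criterion you then actually use.

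The genuine gap is step 4 of (2)$\implies$(1), which carries the whole content of the theorem, and it fails.

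\textbf{The sets $E_\alpha$ are null.} To guarantee the witnesses, each $E_\alpha$ must force infinitely many coordinates $t_j$ into $K^0_\alpha$ or $K^1_\alpha$. Indeed, for a fixed $s\ni\alpha$ with $|s|=n$, the $2^{n-1}$ patterns giving $\alpha$ the value $1$ need pairwise distinct witnesses. The sets $K^0_\alpha$ and $K^1_\alpha$ are disjoint and each has measure $>1/2-\eta_j$, so each has measure $<1/2+\eta_j$. Hence $\mu^\omega(E_\alpha)\leq\prod_j(1/2+\eta_j)=0$, however the $\eta_j$ decrease. A product of infinitely many coordinate constraints is positive only if the constraint sets have measures tending to $1$ summably. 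So the caliber hypothesis cannot be applied to the $E_\alpha$ at all.

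\textbf{No coding into one point can work.} Suppose all witnesses for a subfamily $\Gamma$ are coordinates of a single $t\in K^\omega$. The pattern $(1,0)$ on $\{\alpha,\beta\}$ gives some $t_j\in K^1_\alpha\cap K^0_\beta$. So $\alpha\mapsto\{j\colon t_j\in K^1_\alpha\}$ is injective on $\Gamma$, and $|\Gamma|\leq\mathfrak{c}$. But $\kappa=\mathfrak{c}^+$ satisfies the hypotheses: it is a precaliber, hence a caliber, of measure algebras, as recalled in the paper, and $\mathrm{cf}(\mathfrak{c}^+)>\omega_1$. Your proposed indexing of coordinates by positions in an ordering of the chosen $\alpha$'s does not escape this. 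Moreover, $E_\alpha$ must be defined before $\Gamma$ is chosen. A coordinate serving the pattern $(0,1)$ on all pairs would also have to put the middle element of a triple into both $K^0_\alpha$ and $K^1_\alpha$.

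\textbf{What is missing.} You need a genuinely different use of the caliber. It must keep every finite Boolean combination $\bigcap_{\alpha\in s}K^{\varepsilon(\alpha)}_\alpha$ nonempty along a $\kappa$-sized subfamily, with separate witnesses for each finite configuration. That means controlling how the approximation errors of the closed sets $K^i_\alpha$ accumulate. This is the substance of Plebanek's argument, and your outline does not supply it. Your remark that $\mathrm{cf}(\kappa)>\omega_1$ ``should enter'' in step 4 is not attached to any step that works.
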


Examining the proof of the above theorem, one can easily see that if $\kappa$ is not caliber of measure algebras, then there is a $\kappa$-Corson compact space with a measure of type $\kappa$ (see also \cite[Theorem 12.5]{Zakrzewski}).

\begin{thm} If $\kappa \geq \mathrm{cf}(\kappa)>\omega_1$, the following are equivalent for a family $\mathcal{F} \subseteq [\kappa]^{<\omega}$
	\begin{enumerate}
		\item $\ell_1(\kappa)$ can be embedded into $X_\mathcal{F}$ if and only if $\mathcal{F}$ which is not $\kappa$-Corson,
		\item $\kappa$ is a caliber of measure algebras.
	\end{enumerate}
\end{thm}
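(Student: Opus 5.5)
We read condition (1) as quantified over all hereditary families $\mathcal{F}\subseteq[\kappa]^{<\omega}$ covering $\kappa$. One implication, ``$\mathcal{F}$ not $\kappa$-Corson $\Rightarrow$ $\ell_1(\kappa)$ embeds'', holds always: if $A\in\overline{\mathcal{F}}$ has size $\kappa$, then $(e_\alpha)_{\alpha\in A}$ is isometrically equivalent to the unit basis of $\ell_1(\kappa)$. So the content is that the converse holds for every $\mathcal{F}$ exactly when $\kappa$ is a caliber of measure algebras. Both directions go through Theorem \ref{l1-general} with $\lambda=\kappa$, which applies since $\mathrm{cf}(\kappa)>\omega$. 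It lets us replace ``$\ell_1(\kappa)$ embeds into $X_\mathcal{F}$'' by ``$\overline{\mathcal{F}}$ carries a measure of type $\kappa$''.

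\emph{(2) $\Rightarrow$ (1).} Assume $\kappa$ is a caliber of measure algebras and $\ell_1(\kappa)$ embeds into $X_\mathcal{F}$. By Theorem \ref{l1-general}, $K=\overline{\mathcal{F}}$ carries a measure of type $\kappa$. By Plebanek's theorem, $K$ maps continuously onto $[0,1]^\kappa$. It remains to see that this forces $\mathcal{F}$ not to be $\kappa$-Corson. The paper notes that $\kappa$-Corson families correspond to compacta embeddable in $\{x\in\mathbb{R}^\kappa\colon|\mathrm{supp}(x)|<\kappa\}$. I would use the standard fact that such a compactum cannot map onto $[0,1]^\kappa$, mirroring the Corson/$[0,1]^{\omega_1}$ fact used in the first Proposition (3). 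If that fact is not available for general $\kappa$, I would argue directly, as in the proof of (1) $\Rightarrow$ (2) of Theorem \ref{l1-general}:
\begin{enumerate}
\item Pull back a measure of type $\kappa$ from $[0,1]^\kappa$, or use the given $\mu$ directly, and find $\delta>0$ and a set $I$ of size $\kappa$ with $\mu(K_\alpha)>\delta$ for $\alpha\in I$; uncountable cofinality gives $I$.
\item Apply the caliber property to get a point $t$ lying in $\kappa$ many $K_\alpha$, $\alpha\in I$.
\item Conclude that $\{\alpha\in I\colon t\in K_\alpha\}\subseteq t\in\overline{\mathcal{F}}$ is a set of size $\kappa$ in $\overline{\mathcal{F}}$.
\end{enumerate}
Step (2) is the delicate point: the caliber property is formulated for $\mu_\kappa$-positive sets in $2^\kappa$, so $\mu$ on $K\subseteq 2^\kappa$ must be transferred. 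I would use Maharam's theorem: a measure of type $\kappa$ with uniform large sets, after restricting to a homogeneous part, has measure algebra isomorphic to that of $\mu_\kappa$. The caliber property is then read off the measure algebra, as is standard: calibers depend only on the algebra, and one can use closed $K_\alpha$ together with a compactness argument on finite intersections. This direct route bypasses Plebanek's theorem entirely.

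\emph{(1) $\Rightarrow$ (2).} Suppose $\kappa$ is not a caliber of measure algebras. By the remark after Plebanek's theorem, there is a $\kappa$-Corson compact $L$ carrying a measure of type $\kappa$. We must realize such a situation as $\overline{\mathcal{F}}$. I would mimic the proof of Theorem \ref{l_1} (2) $\Rightarrow$ (1):
\begin{enumerate}
\item From the failure of the caliber, take $\mu_\kappa$-positive sets $A_\alpha$, $\alpha<\kappa$, such that no point lies in $\kappa$ many of them.
\item Shrink each $A_\alpha$ to a compact $K_\alpha$ of measure $>\delta$; by passing to a subfamily of size $\kappa$ and using $\mathrm{cf}(\kappa)>\omega$, a single $\delta$ works.
\item Set $\mathcal{F}=\{I\in[\kappa]^{<\omega}\colon\bigcap_{\alpha\in I}K_\alpha\neq\emptyset\}$.
\end{enumerate}
If $A\in\overline{\mathcal{F}}$, then every finite subfamily of $\{K_\alpha\}_{\alpha\in A}$ has nonempty intersection, so compactness gives a point in all $K_\alpha$ with $\alpha\in A$; hence $|A|<\kappa$ and $\mathcal{F}$ is $\kappa$-Corson. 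On the other hand, the argument of Theorem \ref{l1-general} (1) $\Rightarrow$ (2) via the Claim and Lemma \ref{measure-theory}, run with the sets $K_\alpha$ in $2^\kappa$, shows $(e_\alpha)_{\alpha<\kappa}$ is equivalent to the $\ell_1(\kappa)$ basis. So (1) fails.

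The main obstacle is obtaining the failing family in step (1) with uniform measure bound and compactness, and the measure-algebra transfer in the forward direction. I expect both to be routine regularity arguments.
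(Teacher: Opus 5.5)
Your proof is correct, but it takes a more direct route than the one the paper indicates. The paper gives no written proof. It places the theorem after Plebanek's theorem and the remark on its proof, so the intended argument is as follows.
\begin{enumerate}
\item For (2)$\Rightarrow$(1): Theorem \ref{l1-general} gives a measure of type $\kappa$ on $\overline{\mathcal F}$, and Plebanek's theorem gives a surjection onto $[0,1]^\kappa$. One still needs the fact that a $\kappa$-Corson compactum admits no such surjection, which the paper only proves for $\omega_1$.
\item For (1)$\Rightarrow$(2): one extracts a $\kappa$-Corson compactum with a measure of type $\kappa$ from Plebanek's proof. How to realize it as some $\overline{\mathcal F}$ is left implicit.
\end{enumerate}

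You instead use that points of $\overline{\mathcal F}$ are themselves subsets of $\kappa$. So a single point lying in $\kappa$ many $K_\alpha$ is already a set of size $\kappa$ in $\overline{\mathcal F}$. This removes the need for both Plebanek's theorem and the non-mapping fact. For the converse, your explicit family $\{I\colon\bigcap_{\alpha\in I}K_\alpha\neq\emptyset\}$ makes things concrete. There $\ell_1(\kappa)$ comes from Lemma \ref{measure-theory} applied to $\mu_\kappa$ on $2^\kappa$, exactly as in Theorem \ref{l_1}. A side effect is that your argument only uses $\mathrm{cf}(\kappa)>\omega$; the hypothesis $\mathrm{cf}(\kappa)>\omega_1$ is inherited from Plebanek's theorem.

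What your route costs is the measure-algebra transfer. Two details there need to be stated correctly.
\begin{enumerate}
\item Restrict $\mu$ to a Maharam-homogeneous summand of type $\kappa$ before choosing $\delta$ and $I$. Such a summand exists since $\mathrm{cf}(\kappa)>\omega$. Otherwise the $K_\alpha$ might all be null on that summand. Rerunning the argument of Theorem \ref{l1-general} on the normalized restriction, which still has type $\kappa$, gives new $\delta$ and $I$.
\item The paper defines caliber pointwise, for subsets of $2^\kappa$. Your compactness step needs the Boolean version: a centered subfamily of size $\kappa$ in the measure algebra. This follows from the pointwise version by choosing representatives through a lifting of $\mu_\kappa$, possibly shrunk to compact sets.
\end{enumerate}
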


Note that if $\kappa$ is such that $\lambda^\omega < \kappa$ for each $\lambda<\kappa$, then $\kappa$ is a precaliber of measure algebras (and so, e.g. $\mathfrak{c}^+$ is a precaliber of measure algebras). On the other hand it is consistent that
$\omega_2$ is not a caliber of measure algebras. For the further discussion, see \cite{Plebanek97}.

\section{Copies of long \texorpdfstring{$c_0$}{c0}}

First, notice that it is rather easy to recognize for which families $\mathcal{F}$ the space $X_\mathcal{F}$ is isomorphic to $c_0$ regardless of whether we are dealing with the separable or non-separable case.

\begin{prop}\label{prop:c0} Let $\kappa$ be an infinite cardinal and let $\mathcal{F} \subseteq [\kappa]^{<\omega}$. A space $X_\mathcal{F}$ is isomorphic to $c_0(\kappa)$ if and only if $\mathcal{F}$ is bounded. Consequently, $X_G$ is
	isomorphic to $c_0$ if and only if $G$ is $K_n$-free for some $n\in \omega$ (a graph is $K_n$-free if it does not contain a clique of size $n$).
\end{prop}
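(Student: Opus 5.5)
The plan is to show that if $\mathcal{F}$ is bounded, the identity on $c_{00}(\kappa)$ is an isomorphism between $\|\cdot\|_\mathcal{F}$ and $\|\cdot\|_\infty$, and that if $\mathcal{F}$ is unbounded, no isomorphism with $c_0(\kappa)$ can exist at all.

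\emph{Bounded case.} Suppose $|F|\leq n$ for every $F\in\mathcal{F}$. Since $\mathcal{F}$ contains all singletons, for every $x\in c_{00}(\kappa)$ we get
\[ \|x\|_\infty \leq \|x\|_\mathcal{F} \leq n\|x\|_\infty . \]
So the identity on $c_{00}(\kappa)$ extends to an isomorphism of the completions, and $X_\mathcal{F}\cong c_0(\kappa)$.

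\emph{Unbounded case.} Suppose $\mathcal{F}$ is unbounded, so it contains sets $F_n$ with $|F_n|\geq n$. I will show that $X_\mathcal{F}$ contains uniformly complemented, uniformly isomorphic copies of $\ell_1^n$ for all $n$. This is impossible in $c_0(\kappa)$, which is why I take that route instead of trying to compare norms directly.

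\begin{enumerate}
\item Fix $F\in\mathcal{F}$ with $|F|=n$. Because $\mathcal{F}$ is hereditary, every subset of $F$ lies in $\mathcal{F}$, so for $x$ supported on $F$ we have $\|x\|_\mathcal{F}=\|x\|_1$. Thus $\mathrm{span}\{e_\alpha:\alpha\in F\}$ is isometric to $\ell_1^n$.
\item The projection $P_F$ has norm $1$, since the basis is $1$-unconditional and $\|\cdot\|_\mathcal{F}$ is a supremum of sums of absolute values over sets.
\item If $T\colon X_\mathcal{F}\to c_0(\kappa)$ were an isomorphism, then $c_0(\kappa)$ would contain $\|T\|\|T^{-1}\|$-complemented copies of $\ell_1^n$, uniformly in $n$. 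Dualising, $\ell_1(\kappa)$ would contain uniformly complemented copies of $\ell_\infty^n$.
\item This contradicts the fact that $\ell_1(\kappa)$ has cotype $2$, while $\ell_\infty^n$ has cotype-$2$ constants growing like $\sqrt{n}$; cotype passes to subspaces with constants controlled by the embedding constant.
\end{enumerate}
A more elementary alternative is to compare $\ell_1^n$ directly with subspaces of $c_0$. Unconditional sums in $\ell_1^n$ behave like $\sum|a_i|$, while any vectors $y_i$ in $c_0$ satisfy $\mathrm{Ave}_\pm\|\sum \pm y_i\|\lesssim \sqrt{\log n}\,\max_i\|y_i\|$ in the relevant averaged sense. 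Either way, the key point is that $c_0(\kappa)$ has no uniformly isomorphic copies of $\ell_1^n$.

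The step I expect to be the main obstacle is justifying that last fact cleanly, since this is a non-combinatorial input. I would simply cite the standard result that $c_0(\Gamma)$ does not contain $\ell_1^n$'s uniformly; for example, it has nontrivial type, or one can use the cotype argument above.

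\emph{The graph statement.} The family of finite cliques of $G$ is bounded if and only if $G$ is $K_n$-free for some $n$. The consequence for $X_G$ then follows immediately from the first part.
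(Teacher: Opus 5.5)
Your argument for the unbounded direction (steps 1--4) is correct, but it takes a different route from the paper. The bounded direction and the graph consequence are handled exactly as in the paper.

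\textbf{The paper's route.} It passes to a countable $I\subseteq\kappa$ on which $\mathcal{F}|I$ is still unbounded. It notes that $X_{\mathcal{F}|I}$ is complemented in $X_\mathcal{F}\cong c_0(\kappa)$, so Granero's theorem on complemented subspaces of $c_0(\Gamma)$ gives $X_{\mathcal{F}|I}\cong c_0$. It then applies the Lindenstrauss--Zippin uniqueness of the unconditional basis of $c_0$. The normalized $1$-unconditional basis $(e_\alpha)_{\alpha\in I}$ would have to be equivalent to the unit vector basis, which contradicts $\|\chi_F\|_\mathcal{F}=|F|$.

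\textbf{Your route.} You work only with the finite blocks $\mathrm{span}\{e_\alpha:\alpha\in F\}$, which are isometric to $\ell_1^{|F|}$ and $1$-complemented. You push them through $T$ and dualize. The adjoint of the projection $TP_FT^{-1}$ embeds $\ell_\infty^{|F|}$ into $\ell_1(\kappa)$ with distortion at most $(\|T\|\|T^{-1}\|)^2$, and cotype $2$ of $\ell_1(\kappa)$ finishes the argument.

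\textbf{What each buys.} Your route avoids both cited structure theorems (Lindenstrauss--Zippin is a deep result) and the countable reduction, at the cost of importing cotype. It is also quantitative: it gives Banach--Mazur distance $d(X_\mathcal{F},c_0(\kappa))\gtrsim|F|^{1/4}$ for every $F\in\mathcal{F}$. It is more general too. The same reasoning shows that for unbounded $\mathcal{F}$ the space $X_\mathcal{F}$ is not isomorphic to a complemented subspace of any $C(K)$, because $C(K)^*$ is an $L_1$-space and has cotype $2$.

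\textbf{Errors in your closing paragraphs.} There you misidentify the key fact, and the result you propose to ``simply cite'' is false. The space $c_0$ does contain $\ell_1^n$ uniformly, even isometrically. The map $a\mapsto\bigl(\sum_{i\le n}\varepsilon_i a_i\bigr)_{\varepsilon\in\{-1,1\}^n}$ is an isometric embedding of $\ell_1^n$ into $\ell_\infty^{2^n}\subseteq c_0$. Consequently $c_0(\Gamma)$ has \emph{trivial} type.

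Your ``elementary alternative'' fails for the same reason. The images $y_i$ of the unit vectors under this embedding satisfy $\|y_i\|=1$, yet $\|\sum_i\pm y_i\|=n$ for every choice of signs. No bound of order $\sqrt{\log n}$ holds.

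What is true, and what steps 3--4 actually prove, is that $c_0(\kappa)$ contains no \emph{uniformly complemented} copies of $\ell_1^n$. The complementation is essential: it is exactly what lets you transfer the copies to $\ell_1(\kappa)=c_0(\kappa)^*$. So keep steps 1--4 as the proof, and discard both the suggested citation and the Rademacher-average estimate.
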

\begin{proof}
	The only nontrivial part is to prove that if $X_\mathcal{F}$ is isomorphic to $c_0(\kappa)$, then $\sup\{|F|\colon F\in \mathcal{F}\}$ is finite. Suppose otherwise. Then there is a countable $I\subseteq
	\kappa$ such that $\sup\{|F|\colon F\in \mathcal{F}|I \} = \infty$, where $\mathcal{F}|I = \{F\cap I\colon F\in \mathcal{F}\}$. The space $X_{\mathcal{F}|I}$ is a closed complemented subpace of $X_\mathcal{F}$ which itself is isomorphic to
	$c_0(\kappa)$, and so $X_{\mathcal{F}|I}$ is isomorphic to $c_0$ (see \cite{Granero}). By the uniqueness of the base of $c_0$ (\cite{Zippin}, see also \cite[page 104]{ZOO}) we get a contradiction.
\end{proof}

The following theorem characterizes when a combinatorial Banach space contains an isomorphic copy of $c_0(\lambda)$. Notice that this time it does not work for countable $\lambda$.

\begin{thm}\label{c0-general}
Let $\lambda \leq \kappa$ be infinite cardinals with $\cf(\lambda)>\omega$ and let $\mathcal{F} \subseteq [\kappa]^{<\omega}$ be a hereditary family covering $\kappa$.
The following are equivalent:
\begin{enumerate}[(1)]
\item There is an isomorphic embedding $T\colon c_0(\lambda) \to X_\mathcal F$,
\item There is an operator $T\colon c_0(\lambda) \to X_\mathcal F$ with range of density $\lambda$,
\item There is $S\in [\kappa]^\lambda$ and $n<\omega$ such that $\mathcal F|S \subseteq [S]^{\leq n}$.
\end{enumerate}
\end{thm}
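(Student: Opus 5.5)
We prove (3) $\Rightarrow$ (1) $\Rightarrow$ (2) $\Rightarrow$ (3). The implication (1) $\Rightarrow$ (2) is trivial, since an isomorphic embedding of $c_0(\lambda)$ has range isomorphic to $c_0(\lambda)$, which has density $\lambda$.

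For (3) $\Rightarrow$ (1), fix $S$ and $n$ as in (3) and consider the closed span of $\{e_\alpha\colon \alpha\in S\}$. For $x\in c_{00}(S)$, every $F\in\mathcal F$ satisfies $F\cap S\in \mathcal F|S$, so $|F\cap S|\le n$. Hence
\[ \|x\|_\infty\le \|x\|_\mathcal F\le n\|x\|_\infty. \]
Thus $(e_\alpha)_{\alpha\in S}$ is equivalent to the unit basis of $c_0(S)\cong c_0(\lambda)$, and $T(e_\alpha)=e_\alpha$ is the required embedding.

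The main work is (2) $\Rightarrow$ (3). Let $T\colon c_0(\lambda)\to X_\mathcal F$ have range of density $\lambda$, and put $y_\xi = Te_\xi$.

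First we extract a disjointly supported sequence. Each $y_\xi$ has countable support (we use countable supports as in the preliminaries; for general $\kappa$ the same approximation argument works). Since the range has density $\lambda$, the set $\{\xi\colon y_\xi\neq 0\}$ has size $\lambda$, and in fact we can find $\lambda$ many $\xi$ with $\|y_\xi\|$ bounded below. Indeed, if for every $m$ only fewer than $\lambda$ many $\xi$ had $\|y_\xi\|>1/m$, then $\cf(\lambda)>\omega$ would bound the density of the range below $\lambda$. Applying the $\Delta$-system lemma to the countable supports is problematic, since that lemma needs assumptions on $\lambda$ such as $\mu^\omega<\lambda$. Instead we avoid $\Delta$-systems: approximate each $y_\xi$ by a finitely supported $z_\xi$ with $\|y_\xi-z_\xi\|<\varepsilon$. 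A $\Delta$-system of finite sets exists for any $\lambda$ of uncountable cofinality after passing to a subset of size $\lambda$ (use the regular case, or handle singular $\lambda$ with $\cf(\lambda)>\omega$ by a standard refinement). Subtracting the root coordinates, which form a finite-dimensional perturbation, we aim for a subfamily of size $\lambda$ whose elements $z_\xi$ have pairwise disjoint supports $s_\xi$ and $\|z_\xi\|\ge\delta$. Removing the root contribution requires care. Using weak nullness of $(y_\xi)$, which holds because $(e_\xi)$ is weakly null in $c_0(\lambda)$ and $T$ is continuous, the root coordinates of $y_\xi$ tend to $0$ along all but countably many $\xi$. Hence for all but fewer than $\lambda$ many $\xi$ they are small, and this step works.

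Next, write $u_\xi = P_{s_\xi}y_\xi$. Since the basis is $1$-unconditional, $\|\sum_{\xi\in A} P_{s_\xi} y_\xi\|\le \|\sum_{\xi\in A} y_\xi\|$ up to small errors, so
\[ \Big\|\sum_{\xi\in A}u_\xi\Big\|_\mathcal F\le \|T\|+\text{small} \]
for every finite $A$. Each $u_\xi$ has norm at least $\delta/2$, so pick $F_\xi\in\mathcal F$ with $F_\xi\subseteq s_\xi$ and $\sum_{\alpha\in F_\xi}|u_\xi(\alpha)|\ge \delta/4$. Thinning again, we may assume all $|F_\xi|=k$ and the coefficient patterns are close. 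Let $S'=\bigcup_\xi F_\xi$. If there were $G\in\mathcal F$ meeting many $F_\xi$ with large total mass, we would get a contradiction with the bound on $\|\sum u_\xi\|$, but this does not give a bound on arbitrary $|G\cap S'|$.

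So we choose one point $\alpha_\xi\in F_\xi$ carrying $|u_\xi(\alpha_\xi)|\ge \delta/4k$ and set $S=\{\alpha_\xi\}$. Suppose $G\in \mathcal F|S$ has $m$ points $\alpha_\xi$, $\xi\in A$. Split $A$ by the sign of $u_\xi(\alpha_\xi)$, and let $A'$ be the larger half. Then
\[ \Big\|\sum_{\xi\in A'} u_\xi\Big\|_\mathcal F\ge \sum_{\xi\in A'}|u_\xi(\alpha_\xi)|\ge \frac{m}{2}\cdot\frac{\delta}{4k}, \]
while the left side is at most $\|T\|+1$. Hence $m\le n:=8k(\|T\|+1)/\delta$, which gives (3). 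Signs are in fact irrelevant here, since $\|\cdot\|_\mathcal F$ sums absolute values.

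The expected main obstacle is the disjointification step: producing $\lambda$ many vectors with disjoint finite supports, norms bounded below, and partial sums uniformly bounded, when $\lambda$ may be singular of uncountable cofinality. Weak nullness combined with the $\Delta$-system lemma for finite sets is the tool we would try first.
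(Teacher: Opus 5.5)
Your arguments for (1)$\Rightarrow$(2) and (3)$\Rightarrow$(1) are fine. The disjointification in (2)$\Rightarrow$(3), however, has a genuine gap, in two places.

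First, the $\Delta$-system lemma for finite sets is false for singular $\lambda$ of uncountable cofinality, so there is no ``standard refinement'' to appeal to. Write $\lambda=\sup_{i<\cf(\lambda)}\lambda_i$ with $\lambda_i<\lambda$. The family of sets $\{i,(i,\zeta)\}$, for $i<\cf(\lambda)$ and $\zeta<\lambda_i$, has size $\lambda$. Any subfamily of size $\lambda$ contains two sets from one block, which meet in $\{i\}$, and two sets from different blocks, which are disjoint. So no such subfamily is a $\Delta$-system.

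Second, even for regular $\lambda$, the bound $\|\sum_{\xi\in A}u_\xi\|_{\mathcal F}\le\|T\|+\text{small}$ is not uniform in $A$. By disjointness of the $s_\eta$,
\[ \sum_{\xi\in A}u_\xi = P_{\bigcup_{\eta\in A}s_\eta}\Big(\sum_{\xi\in A}y_\xi\Big)-\sum_{\xi\in A}P_{\bigcup_{\eta\in A\setminus\{\xi\}}s_\eta}\,y_\xi . \]
Each term of the last sum is a piece of the tail of $y_\xi$ off $s_\xi$, of norm up to $\varepsilon$. So the error is only $O(|A|\varepsilon)$, and these tails may well add up along a set in $\mathcal F$. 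Your final count is therefore really $m\delta/(4k)\le\|T\|+m\varepsilon$. This bounds $m$ only when $\varepsilon<\delta/(4k)$. But $k=|F_\xi|$ is produced from $s_\xi$, i.e.\ after $\varepsilon$ has been fixed, so the argument as written is circular.

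The missing idea is one you already have: weak nullness gives exact point-countability, not just smallness. For $\gamma<\kappa$, the coordinate functional $e^*_\gamma$ has norm $1$ on $X_{\mathcal F}$ (since $\{\gamma\}\in\mathcal F$), and $T^*e^*_\gamma\in\ell_1(\lambda)$. Hence $y_\xi(\gamma)=(T^*e^*_\gamma)(e_\xi)\neq 0$ for only countably many $\xi$. This is precisely the paper's Claim, which it proves by hand: if $\gamma$ lay in uncountably many supports, summing $n>1/\varepsilon$ vectors $e_{\alpha_i}$ with $Te_{\alpha_i}(\gamma)$ of one sign and modulus $\ge\varepsilon$ would contradict $\|T\|=1$.

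Since each $\supp(y_\xi)$ is countable, the intersection graph on $\{\xi\colon y_\xi\neq 0\}$ has countable degrees. Hence its components are countable, so there are $\lambda$ of them. Choosing one index per component gives $\lambda$ many $y_\xi$ with \emph{exactly} disjoint supports. This works for every uncountable $\lambda$, with no approximation and no $\Delta$-system.

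Then, as in the paper, use $\cf(\lambda)>\omega$ to find $n$ and points $\gamma_\xi$ with $|y_\xi(\gamma_\xi)|>1/n$ for $\lambda$ many $\xi$. For $F=\{\gamma_\xi\colon \xi\in A\}\in\mathcal F$ you get $|F|/n\le\|P_F\sum_{\xi\in A}y_\xi\|_{\mathcal F}\le\|T\|$, with no error term. Your detour through $F_\xi$ and $k$ also works once the supports are exactly disjoint, but it is unnecessary.
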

\begin{proof}
$(1)\implies (2)$ is obvious. For $(3)\implies (1)$ just notice that for $x\in c_{00}(S)$ we have $\|x\|_\infty \leq \|x\| \leq n\|x\|_\infty$. We will prove $(2)\implies (3)$.
Let $T\colon c_0(\lambda) \to X_\mathcal F$ by an operator with range of density $\lambda$. We may assume that $\|T\|=1$. For $\lambda$-many $\alpha<\lambda$ the value $Te_\alpha$ is non-zero, therefore we may assume that this is the case for all $\alpha<\lambda$.
\medskip

\textbf{Claim.} The family $(\supp(Te_\alpha))_{\alpha<\lambda}$ is point-countable.
\medskip

Suppose this is not the case and find $\gamma<\kappa$ and $I\in [\lambda]^{\omega_1}$ such that for $\alpha\in I$ $$Te_\alpha(\gamma) \neq 0.$$
By passing to a subset we could further assume that there is $\varepsilon>0$ such that $|Te_\alpha(\gamma)|\geq \varepsilon$ and that for every $\alpha, \beta \in I$ the values $Te_\alpha(\gamma)$ and $Te_\beta(\gamma)$ have the same sign. Now pick $n<\omega$ large enough so that $n\varepsilon>1$ and distinct $\alpha_1, \alpha_2, \ldots, \alpha_n \in I$ and note that
$$\|e_{\alpha_1} + e_{\alpha_2} + \ldots + e_{\alpha_n} \|_\infty =1$$
but
$$\|T(e_{\alpha_1} + e_{\alpha_2} + \ldots + e_{\alpha_n})\|\geq \|T(e_{\alpha_1} + e_{\alpha_2} + \ldots + e_{\alpha_n})\|_\infty \geq |T(e_{\alpha_1} + e_{\alpha_2} + \ldots + e_{\alpha_n})(\gamma)| = $$ $$|Te_{\alpha_1}(\gamma)| + \ldots + |Te_{\alpha_n}(\gamma)| > n\varepsilon>1,$$
which yields a contradiction.
\medskip

We may therefore assume that $(\supp Te_\alpha)_{\alpha<\lambda}$ are pairwise disjoint and, by further passing to a subset of $\lambda$ of size $\lambda$, that there is $n<\omega$ such that $\|Te_\alpha\|_\infty > 1/n$ for every
$\alpha<\lambda$. For every $\alpha<\lambda$ pick $\gamma_\alpha < \kappa$ such that $|Te_\alpha(\gamma_\alpha)| > 1/n$.

We claim that for $$S=\{\gamma_\alpha:\ \alpha<\lambda\}$$ we have that $\F|S \subseteq [S]^{\leq n}$.

Indeed, take any $F\in \F|S$, $F=\{\gamma_\alpha:\ \alpha\in A\}$ and notice that
$$ \frac{1}{n}|F| = \| \frac{1}{n} \chi_F\| \leq \| P_F\Big( \sum_{\alpha\in A} Te_\alpha \Big) \| \leq \| \sum_{\alpha\in A} Te_\alpha \| = \| T(\sum_{\alpha\in A} e_\alpha) \| \leq 1$$
and therefore $|F|\leq n$.
\end{proof}

Assuming regularity of $\lambda$ we can include $Y_\F$ to the picture.
\begin{prop}
Let $\kappa, \lambda$ be uncountable cardinals with $\cf(\lambda)= \lambda$ and let $\mathcal{F} \subseteq [\kappa]^{<\omega}$ be a hereditary family covering $\kappa$.
The following are equivalent:
\begin{enumerate}[(1)]
\item There is an isomorphic embedding $T\colon c_0(\lambda) \to X_\mathcal F$,
\item There is an operator $T\colon c_0(\lambda) \to X_\mathcal F$ with range of density $\lambda$,
\item There is $S\in [\kappa]^\lambda$ and $n<\omega$ such that $\mathcal F|S \subseteq [S]^{\leq n}$,
\item There is an isomorphic embedding $T\colon \ell_\infty(\lambda) \to Y_\mathcal F$,
\item There is an element $y\in Y_\F$ with $|\supp(y)| = \lambda.$
\end{enumerate}
\end{prop}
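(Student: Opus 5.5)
(1)$\iff$(2)$\iff$(3) is Theorem \ref{c0-general}, since a regular uncountable $\lambda$ has uncountable cofinality. So it remains to connect (4) and (5) with (3). The cycle I will prove is (3)$\implies$(4)$\implies$(5)$\implies$(3).

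\textbf{(3)$\implies$(4).} Take $S\in[\kappa]^\lambda$ and $n$ with $\mathcal F|S\subseteq[S]^{\le n}$, and fix a bijection $\sigma\colon\lambda\to S$. Define $T\colon\ell_\infty(\lambda)\to\mathbb R^\kappa$ by $T(y)(\sigma(\alpha))=y(\alpha)$, with $T(y)=0$ off $S$. Every $F\in\mathcal F$ meets $S$ in a set $F\cap S\in\mathcal F|S$, which has at most $n$ elements. Hence
\[ \|y\|_\infty\le\|Ty\|_{\mathcal F}\le n\|y\|_\infty, \]
where the lower bound uses singletons. So $T$ is an isomorphic embedding into $Y_\mathcal F$.

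\textbf{(4)$\implies$(5).} Let $T\colon\ell_\infty(\lambda)\to Y_\mathcal F$ be an isomorphic embedding. For $\alpha<\lambda$ the vectors $Te_\alpha$ are nonzero, so each has nonempty support. Two things can happen.
\begin{itemize}
\item[(a)] The supports $\supp(Te_\alpha)$ cover a set of size $\lambda$. Then the point-countability argument from the Claim of Theorem \ref{c0-general} applies verbatim, because it only used that $T$ is bounded on finite sums of the $e_\alpha$. Since $\lambda$ is regular and uncountable, we can choose $\lambda$ many $\alpha$ with pairwise disjoint supports. After a further refinement there is $n$ and points $\gamma_\alpha\in\supp(Te_\alpha)$ with $|Te_\alpha(\gamma_\alpha)|>1/n$. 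Put $y=T(\chi_\Lambda)$, where $\Lambda$ is this set of indices. I expect that $y(\gamma_\alpha)=Te_\alpha(\gamma_\alpha)$ for each $\alpha\in\Lambda$, which gives $|\supp(y)|=\lambda$.
\item[(b)] The supports cover fewer than $\lambda$ points. Then $\lambda$ many vectors $Te_\alpha$ live in a coordinate set $C$ of size $<\lambda$, and the same counting argument (an uncountable set of $\alpha$ with $Te_\alpha(\gamma)$ of one sign and bounded away from $0$) gives a contradiction.
\end{itemize}

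The hard step in (a) is the identity $y(\gamma_\alpha)=Te_\alpha(\gamma_\alpha)$, because $T$ need not be weak$^*$-continuous: $T(\chi_\Lambda)$ is not the pointwise sum of the $Te_\alpha$. I would handle it coordinatewise. The functional $y\mapsto T(y)(\gamma)$ is bounded on $\ell_\infty(\lambda)$. Applying it to $\chi_A$ for finite $A$ shows that the coordinates $Te_\alpha(\gamma)$ are absolutely summable in $\alpha$. What I expect to be the main obstacle is that this functional may still have a singular, finitely additive part. To deal with it I would pass to a subfamily using a Rosenthal-type disjointification lemma on $\ell_\infty(\lambda)^*$, so that the singular contributions at the points $\gamma_\alpha$ become small. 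Alternatively, one can avoid the problem: restrict $T$ to $c_0(\lambda)$, obtain $S$ and $n$ as in (3) directly from Theorem \ref{c0-general}, and then get (5) from (3) via (4) applied to $\chi_S$. In that route (4)$\implies$(3) is simply the embedding $c_0(\lambda)\subseteq\ell_\infty(\lambda)$ combined with the fact that $T(c_0(\lambda))$ lands in $X_\mathcal F$. The catch is that $Te_\alpha$ lies in $Y_\mathcal F$, not necessarily in $X_\mathcal F$. However, the proof of Theorem \ref{c0-general} for (2)$\implies$(3) works in $Y_\mathcal F$ as well, since it only used the norm and the projections $P_F$. This is the route I would take.

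\textbf{(5)$\implies$(3).} Let $y\in Y_\mathcal F$ with $|\supp(y)|=\lambda$. Since $\lambda$ is regular and uncountable, there is $n$ such that $S=\{\gamma:|y(\gamma)|>1/n\}$ has size $\lambda$. Let $m$ be an integer with $m\ge n\|y\|_\mathcal F$. For any $F\in\mathcal F|S$,
\[ |F|/n\le\sum_{\gamma\in F}|y(\gamma)|\le\|y\|_\mathcal F, \]
so $|F|\le m$ and $\mathcal F|S\subseteq[S]^{\le m}$, which is (3).
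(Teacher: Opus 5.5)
Your arguments for (1)$\iff$(2)$\iff$(3), (3)$\implies$(4) and (5)$\implies$(3) are correct and match the paper's. The route you settle on for (4)$\implies$(3) is also the paper's: use only finite sums $T(\sum_{\alpha\in A}e_\alpha)$, so the behaviour of $T(\chi_\Lambda)$ never matters, then pass to (5) via $\chi_S$.

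The gap is your claim that the proof of (2)$\implies$(3) in Theorem \ref{c0-general} goes through verbatim in $Y_\F$. The point-countability Claim does transfer. The next step, passing to $\lambda$ many $\alpha$ with the sets $\supp(Te_\alpha)$ pairwise disjoint, silently uses that vectors of $X_\F$ have countable support, so that each support meets only countably many others. In $Y_\F$ this fails. Let $\F$ be the singletons on $\kappa=\omega_1\times 2$ and $\lambda=\omega_1$. Put $T(y)(\gamma,0)=y(\gamma)$ and $T(y)(\gamma,1)=\sum_{\alpha<\gamma}w_{\alpha,\gamma}y(\alpha)$, with $w_{\alpha,\gamma}>0$ and $\sum_{\alpha<\gamma}w_{\alpha,\gamma}\le 1$. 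This is an isometric embedding of $\ell_\infty(\omega_1)$ into $Y_\F=\ell_\infty(\omega_1\times 2)$. Its supports $\{(\alpha,0)\}\cup\{(\gamma,1):\gamma>\alpha\}$ are point-countable but pairwise intersecting, so no two of them are disjoint. Consequently the cross terms $Te_\alpha(\gamma_\beta)$ in the final estimate $\frac1n|F|\le\|P_F(\sum_{\alpha\in A}Te_\alpha)\|$ are no longer controlled. Your case (a) makes the same unjustified disjointification.

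The missing ingredient is the paper's case split.
\begin{itemize}
\item If some $Te_\alpha$ has support of size at least $\lambda$, restrict it to $\lambda$ of its coordinates (coordinate restrictions do not increase $\|\cdot\|_\F$). This gives (5) at once, and hence (3) by your own argument.
\item Otherwise every support has size $<\lambda$. By point-countability each $\supp(Te_\alpha)$ then meets fewer than $\lambda$ other supports. Regularity of $\lambda$ lets you choose, by transfinite recursion, $\lambda$ many $\alpha$ with pairwise disjoint supports.
\end{itemize}
After that, the rest of the argument of Theorem \ref{c0-general} runs unchanged. This is precisely where the paper's proof uses the hypothesis $\cf(\lambda)=\lambda$ rather than merely $\cf(\lambda)>\omega$.
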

\begin{proof}
(5)$\implies$ (3). Let $S_\varepsilon = \{\alpha: |y(\alpha)| > \varepsilon\}$. There exists an $\varepsilon > 0$ with $|S_\varepsilon| \geq \lambda.$ This set works.
\medskip

(3) $\implies$ (5). We have $\|\chi_S\|_\F \leq n$.
\medskip

(3) $\implies$ (4). Take mapping given by $T(e_\alpha) = \chi_{\{S[\alpha]\}}$, where $S[\alpha]$ is the $\alpha$-th element of $S$. This mapping has norm $\leq n$.
\medskip

(4)$\implies$(5) or (3). By a similar argument as in the proof of the previous proposition we can get that $\{\supp (Te_\alpha): \alpha < \lambda\}$ is point countable. Now, we may assume that $|\supp(Te_\alpha)|< \lambda$ (otherwise we get $(5)$). Then for every $\alpha$ the set $\{\xi<\lambda: \supp(Te_\alpha) \cap \supp (Te_\xi) \neq \emptyset\}$ has cardinality less than $\lambda$. So by regularity of $\lambda$ there exists $I \subs \lambda$ with $|I|=\lambda$ and \[
\forall \alpha, \beta \in I \, \supp(Te_\alpha) \cap \supp (Te_\beta) = \emptyset,
\] Now we can repeat argument from previous proposition to prove $(3)$.
\medskip

\end{proof}
\begin{rem}
	The implication $(2)\implies (3)$ follows easily from a result of Drewnowski (\cite[Theorem 2]{Drewnowski})
\end{rem}

\begin{cor}
Let $\kappa, \lambda$ be infinite cardinals with $\cf(\lambda)>\omega$ and let $\mathcal{F} \subseteq [\kappa]^{<\omega}$ be monochromatic.
The following are equivalent:
\begin{enumerate}[(1)]
\item There is an isomorphic embedding $T\colon c_0(\lambda) \to X_\mathcal F$.
\item There is an operator $T\colon c_0(\lambda) \to X_\mathcal F$ with range of density $\lambda$
\item There is a $0$-monochromatic $S\subseteq \kappa$ of size $\lambda$.
\item There is an isometrically isomorphic embedding $T\colon c_0(\lambda) \to X_\F$.
\end{enumerate}
\end{cor}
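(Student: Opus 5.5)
The natural route is to go around the cycle $(4)\implies(1)\implies(2)\implies(3)\implies(4)$. The first two implications are trivial: an isometric embedding is an isomorphic one, and an isomorphic embedding of $c_0(\lambda)$ has range of density $\lambda$. For $(2)\implies(3)$ I would apply Theorem \ref{c0-general}. It gives $S\in[\kappa]^\lambda$ and $n<\omega$ such that $\mathcal F|S\subseteq[S]^{\leq n}$, so the closure of $\mathcal F|S$ contains no infinite set.

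Let $c$ be the coloring with $\mathcal F=\mathcal F_c$. The step $(3)\implies$ ``there is a $0$-monochromatic set of size $\lambda$'' is the content of the Remark following Proposition \ref{countable-support-in-Y}, now applied with $\lambda$ in place of $\kappa$. Restricted to $S$, the coloring has no $1$-monochromatic set of size $n+1$, hence in particular no infinite one. By the Erd\H{o}s--Dushnik--Miller theorem $\lambda\to(\lambda,\omega)^2$, valid for every infinite cardinal $\lambda$, there is $S'\subseteq S$ of size $\lambda$ that is $0$-monochromatic. This is the only step where the coloring structure is used, and it is the one I expect to require the most care, since it rests on invoking the partition relation for possibly singular $\lambda$. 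Erd\H{o}s--Dushnik--Miller does hold for all infinite $\lambda$, so I expect no real difficulty.

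For $(3)\implies(4)$, let $S$ be $0$-monochromatic of size $\lambda$. Then every $F\in\mathcal F$ satisfies $|F\cap S|\leq 1$, because any two points of $F$ are joined in color $1$. Hence for $x\in c_{00}(S)$ we get
\[ \|x\|_\mathcal F=\sup_{F\in\mathcal F}\sum_{i\in F\cap S}|x(i)|=\sup_{\alpha\in S}|x(\alpha)|=\|x\|_\infty, \]
where the last equality uses $\{\alpha\}\in\mathcal F$. So the map sending $e_\xi$ to $e_{S[\xi]}$, with $S[\xi]$ the $\xi$-th element of $S$, is an isometry on $c_{00}(\lambda)$. It therefore extends to an isometric embedding of $c_0(\lambda)$ onto the closed span of $\{e_\alpha:\alpha\in S\}$, which closes the cycle.
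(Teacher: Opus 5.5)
Your proof is correct and follows essentially the paper's route. Like the paper, you take the bounded set from Theorem \ref{c0-general} and thin it via Erd\H{o}s--Dushnik--Miller to a $0$-monochromatic set of size $\lambda$; your explicit sup-norm computation for $(3)\implies(4)$ fills in the step the paper leaves implicit. One small slip: the step you label ``$(3)\implies$'' is really the passage from condition (3) of Theorem \ref{c0-general} to condition (3) of the corollary.
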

\begin{proof}
	Note that if $S\in [\kappa]^\lambda$ satisfies $\mathcal F|S \subseteq [S]^{\leq n}$, then there is $S'\in [S]^\lambda$ such that $\mathcal F|S' \subseteq [S']^{\leq 1}$ -- otherwise by the Erd\H{o}s--Dushnik--Miller theorem we would find an infinite $A\subseteq S,$ $A\in \overline \F$.
\end{proof}

\section{Examples}\label{sec:Examples}

This section can be treated as a kind of 'commercial break', in which we present, without proving anything, some examples of combinatorial structures which have been used or which perhaps may be used to define interesting Banach spaces. We believe that this is just a few of many compelling examples. 

Suppose that $\leq$ is a partial order on $\kappa$. Let $c\colon [\kappa]^2 \to 2$ be the coloring given by $c(x,y) = 1$ iff $x$ and $y$ are comparable. Then $\mathcal{F}_c$ consists of finite $\leq$-chains (and $\mathcal{F}^\perp$ of finite
$\leq$-antichains). The coloring $c$ is perfect (see \cite{Perfect}). Also, note that $\| \cdot \|_c = \| \cdot \|_{\mathrm{finite\ chains}}$ and $\| \cdot \|_{1-c} = \| \cdot \|_{\mathrm{finite\ antichains}}$.

\begin{exa}[Suslin tree] Let $S\subseteq \omega_1$ be a Suslin tree and let $c$ be the \emph{comparability} coloring. By Corollary \ref{perfect} we have $X^*_c = Y_{1-c}$. The space $X_c$ does not contain neither $\ell_1(\omega_1)$ nor $c_0(\omega_1)$.
	We will study $X_c$ in Section \ref{sec:Suslin}.
\end{exa}

\begin{exa}[Sierpi\'nski coloring]  Let $A\subseteq \mathbb{R}$ be of size $\omega_1$ and fix $\preceq$, a well-ordering of $A$. Define $c\colon [A]^2 \to 2$: for $x<y$ let $c(x,y)=1$ iff $x\preceq y$. Then there are neither uncountable $1$-monochromatic not
	uncountable $0$-monochromatic sets, and so $X_c$ does not contain neither $\ell_1(\omega_1)$ nor $c_0(\omega_1)$. Again, $c$ is perfect (as it is generated by a partial order), and so, by Corollary \ref{perfect} we have $X^*_c = Y_{1-c}$. In \cite{Perfect} the authors considered countable versions of such colorings (in
	this case we take an infinite $A\subseteq \mathbb{Q}$, fix a well-ordering of $A$ and proceed as above). It turned out that despite there are many 'non-isomorphic' countable Sierpi\'nski colorings, all the combinatorial Banach spaces induced by them are
	isomorphic. Probably in the uncountable case this is no longer true.
\end{exa}

Note that Sierpi\'nski colorings exist in $\mathsf{ZFC}$ and so we always have a combinatorial Banach space of density $\omega_1$ without copies of $\ell_1(\omega_1)$ and $c_0(\omega_1)$. However, consistently there are uncountable cardinals without
this property.
Recall that $\kappa$ is a \emph{Ramsey cardinal} if for each coloring $c\colon [\kappa]^{<\omega} \to \{0,1\}$, there is a monochromatic set of size $\kappa$. It follows that if $\mathcal{F} \subseteq
	[\kappa]^{<\omega}$, then $X_\mathcal{F}$ contains $\ell_1(\kappa)$ or $c_0(\kappa)$. Also, $\kappa$ is \emph{weakly compact} if for each coloring $c\colon [\kappa]^2 \to \{0,1\}$, there is a monochromatic set of size $\kappa$. Thus, if $\kappa$ is
	weakly compact, then each $X_c$ contains a copy of $\ell_1(\kappa)$ or a copy of $c_0(\kappa)$.

	\begin{exa}[Hungarian graphs] In the literature, especially in articles of Erd\H{o}s, Hajnal, Mat\'e, Komj\'ath, one can find a lot of peculiar examples of colorings and graphs, which may be used to for generating interesting Banach spaces. We present here
		two such examples. A graph $G$ on $\omega_1$ is called \emph{Hajnal-Mat\'e} (HM, in short) if the chromatic number of $G$ is $\omega_1$ but for each $\alpha<\omega_1$ the set $\{\eta<\alpha\colon \{\eta, \alpha\} \in E(G)\}$ is finite. Here, $E(G)$
		denotes the set of edges of $G$. Under $\Diamond$ there
		are HM graphs which are even triangle free. Under Martin's Axiom and negation of $\mathsf{CH}$, there are no HM graphs (see \cite{HajnalMate}).
	Because of Theorem \ref{prop:c0} if $G$ is a triangle-free HM graph, $X_G$ is isomorphic to $c_0(\omega_1)$, so it does not produce any new object in the isomorphism category. However, perhaps this renorming of $c_0(\omega_1)$ possesses some
	interesting properties. 

	A graph on (an uncountable cardinal) $\kappa$ is \emph{Erd\H{o}s-Hajnal} if it has a chromatic number $\kappa$ but each of its subgraphs of size less than $\kappa$ has countable chromatic number. An example of an Erd\H{o}s-Hajnal graph on
	$\omega^{\omega_1}$ is the following: connect $f, g\in \omega^{\omega_1}$ by an edge if $\{\alpha\colon f(\alpha)=g(\alpha)\}$ is countable. Again, $X_G$, for $G$ being Erd\H{o}s-Hajnal, may have interesting properties, particularly from the point of view of its subspaces of density less than $2^{\omega_1}$. 
\end{exa}

\begin{exa}[Shelah families] In \cite{Shelah-Steprans} and \cite{Shelah88} the authors considered a hereditary family $\mathcal{F}$ with the property which in our language can expressed in the following way: for every sequence $\{(\xi_\alpha,
	\beta_\alpha)\colon \xi_\alpha \ne \beta_\alpha < \omega_1,
\alpha<\omega_1\}$ and every $k\in \omega$ there is $X \in [\omega_1]^k$ such that $\{\xi_\alpha\colon \alpha\in X\} \in \mathcal{F}$ and $\{\beta_\alpha\colon \alpha \in X\} \in \mathcal{F}^\perp$. The combinatorial Banach spaces induced by this
family is one of the examples of Banach spaces with \emph{few} operators. In the case of \cite{Shelah-Steprans} it particularly means that every operator on the resulting space is a sum of a multiple of identity and an operator with separable range.
\end{exa}

\begin{exa}[compact families] Compact families are particularly important in the case of separable combinatorial Banach spaces. In fact, most of the combinatorial Banach spaces studied in the literature was induced by compact families. The canonical
	example here is the Schreier family $\mathcal{S}$: $S\subseteq \omega$ belongs to $\mathcal{S}$ if $|S|\leq \min S+1$. For $\kappa > \omega$ there is no obvious counterpart of Schreier family. In  \cite{LargeArgyros} Argyros et al consider a
	hereditary family $\mathcal{F}$ on $\omega_1$ which is compact and which is additionally $\alpha$-large, i.e. for every infinite $X\subseteq \omega_1$, the Cantor-Bendixson rank of $\mathcal{F}|X \colon = \{ F\cap X\colon F\in \mathcal{F}\}$ is at least
$\omega^\alpha+1$ (equivalently $ \mathcal{F}|X$ contains a copy of Schreier family of rank at least $\alpha$). 

The authors of \cite{LargeArgyros} used this family to define Banach spaces but in a different spirit than that of our paper: they construct rather a version of Tsirelson space. 

There is also a similar concept: a family $\mathcal{F}$ is \emph{large} if for every $k\in \omega$ and an infinite set $X\subseteq \omega_1$ there is $F\in \mathcal{F}$, $|F|>k$ and $F\subseteq X$.
Note that there is a compact, hereditary, large family on $\kappa$ if and only if $\kappa$ is not $\omega$-Erd\H{o}s cardinal. 

It is worth mentioning that there is a famous open problem if one can define a family of properties similar to Schreier family on $\omega_1$: Fremlin in \cite{Fremlin-DU} asked if there is a family $\mathcal{F}\subseteq [\omega_1]^{<\omega}$ which is compact and
$1/2$-filling, i.e. such that for each $A\in [\omega_1]^{<\omega}$ there is $F\in \mathcal{F}$ such that $|F|>|A|/2$.

\end{exa}

\section{Universal spaces}\label{sec:Universal}

In this section we will show that whenever there is a universal hypergraph on $\kappa$, there is a combinatorial Banach space $X$ containing a complemented copy of every Banach space with unconditional basis of size $\kappa$. Recall that a Banach
space $X$ contains a \emph{complemented} copy of a Banach space $Y$ if there is a Banach space $Z$ such that $X$ is isomorphic to $Y \oplus Z$, or, equivalently, if there is a continuous linear operator $S\colon X \to X$ such that $S^2=S$ and
$S[X]$ is isomorphic to $Y$. 

In \cite{universalPelczynski} Pe\l czy\'nski gave an example of a Banach space with countable unconditional basis which contains a complemented copy of every Banach space with countable unconditional basis, and moreover he showed that every space
with this property is isomorphic to his example. This space is known in the literature as the Pe\l
czy\'nski complementedly universal space. In what follows, by $\kappa$-Pe\l czy\'nski space we will call the
natural generalization of this notion for higher cardinals: a space $X$ is $\kappa$-Pe\l czy\'nski if it has an unconditional basis of size $\kappa$ and it contains a complemented copy of every Banach space with unconditional basis of size $\kappa$.

For a Banach space $X$, by $B(X)$ we mean the unit closed ball in $X$. We say that an unconditional basis of $(e_\alpha)_{\alpha <\kappa}$ is \emph{monotone} if for any sequences $(a_\alpha), (b_\alpha)$ with $|b_\alpha| \leq |a_\alpha|$ for each
$\alpha$ we have $$\|\sum b_\alpha e_\alpha\| \leq\|\sum a_\alpha e_\alpha\|.$$

\begin{thm}\label{complemented2} For every Banach space $X$ with a monotone unconditional basis of size $\kappa$ there is a hereditary family $\mathcal{F}$ on $\kappa$ such that $X_\mathcal{F}$ contains a complemented isometric copy of $X$. 
\end{thm}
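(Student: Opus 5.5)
Following the separable argument of \cite{ZOO}, the idea is to realize the norm of $X$ as a supremum of $\ell_1$-sums over finite sets, by discretizing the positive part of the unit ball of $X^*$. Since the basis $(e_\alpha)_{\alpha<\kappa}$ is monotone unconditional, for $x=\sum a_\alpha e_\alpha$ we have
\[ \|x\| = \sup\Big\{ \sum_\alpha |a_\alpha| \varphi_\alpha \colon \varphi\in B(X^*),\ \varphi_\alpha=\varphi(e_\alpha)\geq 0 \Big\}, \]
and by density we may restrict to functionals $\varphi$ with finite support and rational, indeed dyadic, coordinates. The approach is to encode each such $\varphi$ as a finite set, using many copies of each coordinate to stand for the size of $\varphi_\alpha$.

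Concretely, I would let $\kappa$ index (via a bijection $\kappa\times\omega\to\kappa$) pairs $(\alpha,k)$, where $k$ codes the dyadic level. For a positive functional $\varphi\in B(X^*)$ with finite support and coordinates that are finite sums of distinct powers $2^{-k}$, set $F_\varphi=\{(\alpha,k)\colon 2^{-k}\text{ appears in the binary expansion of }\varphi_\alpha\}$. Let $\mathcal{F}$ be the hereditary closure of $\{F_\varphi\}$ together with the singletons; singletons are harmless since $\|e_\alpha\|=1$ can be normalized, or since $2^{-k}\leq\|e_\alpha^*\|$. The embedding is
\[ J\Big(\sum a_\alpha e_\alpha\Big)=\sum_{\alpha,k} a_\alpha 2^{-k} e_{(\alpha,k)}, \]
and I would check that $\|Jx\|_\mathcal{F}$ is computed by a weighted version of the sup formula. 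There is a defect here: $\mathcal{F}$ gives each $(\alpha,k)$ weight $1$, not $2^{-k}$. So I would instead put the weights into $J$ and verify
\[ \|Jx\|_\mathcal{F}=\sup_{F\in\mathcal{F}}\sum_{(\alpha,k)\in F}|a_\alpha|2^{-k}=\sup_\varphi\sum|a_\alpha|\varphi_\alpha=\|x\|. \]
Here heredity costs nothing, since subsets of $F_\varphi$ correspond to smaller positive functionals, which remain in the ball by monotonicity. That monotonicity step is where the hypothesis is used. Singletons $\{(\alpha,k)\}$ correspond to $\varphi=2^{-k}e^*_\alpha$, which lies in the ball when $k\geq1$ and $\|e_\alpha\|=1$. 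I would discard $k=0$ or handle it by normalization.

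For complementation, define $P\colon X_\mathcal{F}\to X_\mathcal{F}$ by averaging over each block $B_\alpha=\{(\alpha,k)\colon k\}$. Take $Q(y)=\sum_\alpha \big(\sum_k y(\alpha,k)\big) e_\alpha \in X$, the natural candidate left inverse, and set $P=JQ$. Since $QJ=\mathrm{id}$ (because $\sum_k 2^{-k}=1$ after the indexing convention), $P$ is a projection onto $J[X]$. The main obstacle is boundedness of $Q$: one needs $\|Q y\|\le C\|y\|_\mathcal{F}$. For this I would pass to an adjusted family: let $\mathcal{F}$ also contain all sets $\{(\alpha,k_\alpha)\colon \alpha\in A\}$ whenever $\sum_{\alpha\in A} e^*_\alpha$-type functionals of the appropriate shape lie in the ball. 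Then I would compare $\|Qy\|=\sup_\varphi\sum_\alpha\varphi_\alpha\big|\sum_k y(\alpha,k)\big|$ with $\sum_{(\alpha,k)\in F_\varphi}|y(\alpha,k)|$.

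This comparison is exactly where the dyadic coding fails, because $F_\varphi$ picks out only some levels. The remedy I would try first is to replace dyadic splitting with \emph{intervals}: use the coordinates $(\alpha,q)$ for rationals $q\in(0,1]$, and let $F_\varphi\supseteq\{(\alpha,q)\colon q\le\varphi_\alpha\}$, choosing $J$ as a martingale-type spreading. If this does not give an isometric complement, I would fall back to the simplest option: index $\mathcal{F}$ on $\kappa$ by the pairs $(\alpha,\varphi)$ and use a conditional expectation onto the span of $J[X]$, using $1$-unconditionality to get norm one. Verifying that this projection has norm $1$, which is needed for the isometric claim, is the step I expect to be hardest.
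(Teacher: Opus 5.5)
Your isometric part is the paper's. The paper works on $\omega\times\kappa$ with weights $2^{-(n+1)}$ and takes $\F=\{F\colon(\mu_\alpha(F))_\alpha\in B(X^*)\}$. By monotonicity this is exactly the hereditary closure of your sets $F_\varphi$, and your $P=JQ$ is literally the paper's projection $S$. The genuine gap is the boundedness of $Q$, which you flag but never establish, and in fact it is false. Take $X=\ell_2(\kappa)$. If $F\in\F$ contains $(\alpha,1)$ for every $\alpha\in A'$, then $F\subseteq F_\varphi$ with $\varphi_\alpha\ge 1/2$ on $A'$. Hence $\frac12\sqrt{|A'|}\le\|\varphi\|\le 1$, i.e.\ $|A'|\le 4$. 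So $y=\sum_{\alpha\in A}e_{(\alpha,1)}$ has $\|y\|_{\F}\le 4$, while $Qy=\sum_{\alpha\in A}e_\alpha$ and $\|JQy\|_{\F}=\sqrt{|A|}$. The paper's argument for $\|S(x)\|_{\F}\le\|x\|_{\F}$ does not cover this. It replaces $F\cap I_\alpha$ by some $G_\alpha$ with $|G_\alpha|\le|F\cap I_\alpha|$, but whether $\bigcup_\alpha G_\alpha\in\F$ is decided by the weights $\mu_\alpha(G_\alpha)$, not by cardinalities. The example above, with the points $\langle 0,\alpha\rangle$ in the paper's indexing, shows that $S$ is unbounded. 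So your proposal, like the paper's written proof, establishes the isometric embedding but not the complementation.

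Your fallback remedies cannot close the gap, because each keeps $Je_\alpha$ supported in the block of $\alpha$. For $X=\ell_2(\kappa)$, no disjointly supported copy is complemented in any $X_{\F}$. Enlarging $\F$ is excluded anyway: a set $F$ with $(\sum_{(\alpha,k)\in F}2^{-k})_\alpha\notin B(X^*)$ is detected by some $y\ge 0$ and destroys the isometry.

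To see the obstruction, let $x_\alpha\ge 0$ have disjoint supports $B_\alpha$ and be isometrically equivalent to the unit vector basis of $\ell_2$. Let $P$ be a projection onto their closed span with $\|P\|=C$. Compress $P$ to finitely many blocks and average over sign changes on the blocks. This gives a projection of norm at most $C$ of the form $u\mapsto\sum_\alpha\lambda_\alpha(u|_{B_\alpha})x_\alpha$, with $\lambda_\alpha(x_\alpha)=1$ and $\|\lambda_\alpha\|\le C$.

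Fix finite $M_\alpha\subseteq\supp(x_\alpha)$ with $\|x_\alpha-x_\alpha|_{M_\alpha}\|_{\F}\le 1/(2C)$. Then $\lambda_\alpha(x_\alpha|_{M_\alpha})\ge 1/2$ for every such $\lambda_\alpha$. Pass to infinitely many $\alpha$ with $\min_{M_\alpha}x_\alpha\ge c>0$. For $F\in\F$, the vector $(\sum_{\omega\in F\cap B_\alpha}x_\alpha(\omega))_\alpha$ has $\ell_2$-norm at most $1$ (test it against $\sum_\alpha y_\alpha x_\alpha$ with $y\ge 0$). So $F$ meets at most $c^{-2}$ of the sets $M_\alpha$. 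Consequently $u=\sum_{\alpha\in A}x_\alpha|_{M_\alpha}$ has $\|u\|_{\F}\le c^{-2}$, whereas its image has norm at least $\frac12\sqrt{|A|}$. This is absurd for large finite $A$.

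So, at least for $X=\ell_2(\kappa)$, a proof would need a copy of $X$ that is not disjointly supported, or a different idea altogether; neither your proposal nor the paper supplies one. Note also that a complemented isometric copy does not require a norm-one projection, so that is not the step to aim for.
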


\begin{proof}
	First, notice that if $X$ is a Banach space with unconditional basis $(e_\alpha)_{\alpha<\kappa}$, then $X^*$ can be treated as a set of sequences of length $\kappa$ and if $\sigma \in X^*$ and $x\in X$, then $\sigma(x) = \langle \sigma, x\rangle
	\colon = \sum_{\alpha<\kappa}
	\sigma(\alpha) x(\alpha)$. The set of functionals is norming, in the sense that
	\begin{equation}
		\|x\| = \sup\{ \langle \sigma, x\rangle\colon \|\sigma\|^* = 1\}. \tag{$\star$}
\end{equation}
Notice that we do not have to take all $\sigma$'s. It is enough to consider those with finite supports and values in a fixed dense subset of $[-1,1]$. In what follows as this fixed subset we will take the set of dyadic rationals, i.e. numbers of the form
$k/2^n$, for $n\in \omega$ and $k\in [-2^n, 2^n]$.

Instead of $\kappa$ we will work with $\Omega = \omega \times \kappa$. Let $I_\alpha = \omega \times \{\alpha\}$. 
	For $\alpha<\kappa$ let $\mu_\alpha$ be the measure on $\Omega$ defined by \[ \mu_\alpha = \sum_n \delta_{\langle n, \alpha\rangle}/2^{n+1}.\] 
Notice that $\mu_\alpha$ is supported on $I_\alpha$. 

Define 
\[ \mathcal{F} = \{ F \in [\Omega]^{<\omega}\colon (\mu_\alpha(F))_\alpha \in B(X^*) \}. \]
Let $x_\alpha(\langle i, \xi\rangle) = 1/2^{i+1}$ if $\xi = \alpha$ and $x_\alpha(\langle i, \xi\rangle) = 0$ if $\xi \ne \alpha$. Then $x_\alpha \in X_\mathcal{F}$ and $\|x_\alpha\|_\mathcal{F} = 1$ for each $\alpha$.

We will show that $T\colon X \to X_\mathcal{F}$ induced by $T(e_\alpha) = x_\alpha$ is an isometric embedding. Let $y \in X$. Since $(e_\alpha)_\alpha$ is monotone, we may assume that $y$ tests positive on all basis functionals. Moreover, as we have
mentioned above, we can
determine its norm (in the sense of ($\star$)) by using only finitely supported functionals with dyadic rational values. Notice that any such functional can be represented as $(\mu_\alpha(F))_\alpha$ for some $F\in \ef$. Hence we get  

$$\| Ty \|_\mathcal{F} = \| \sum_{\alpha < \kappa} y(\alpha) x_\alpha \|_\mathcal{F} = \sup_{F\in \mathcal{F}} \sum_{\alpha <\kappa} \mu_\alpha(F) \cdot y(\alpha) = \| y \|.$$ 
The second equality follows from that fact that 
\[ \sum_{\langle n,\beta\rangle \in F} \sum_{\alpha<\kappa} x_\alpha(n,\beta) = \sum_{\alpha<\kappa} \mu_\alpha(F). \]

We will now show that $T[X]$ is complemented in $X_\mathcal{F}$. Define $S\colon X_\mathcal{F} \to \mathbb{R}^\Omega$ by 
\[ S(x)(n, \alpha) = \frac{1}{2^{n+1}}\int x \ d \nu_\alpha, \]
where $\nu_\alpha$ is a counting measure supported on $I_\alpha$.

Notice that $S$ is linear and $S(x_\alpha) = x_\alpha$ for each $\alpha$. Hence, $S$ is identity on $T[X]$ and $T[X] \subseteq S[X_\mathcal{F}]$. Also, it is not hard to see that $S^2 = S$. It remains to show that $S$ is continuous. It will follow that
$S[X_\mathcal{F}] \subseteq X_\mathcal{F}$ and, in fact, $S[X_\mathcal{F}] \subseteq T[X]$ (if $x\in X_\mathcal{F}$, then  $y: = (\int x \ d\mu_\alpha)_\alpha \in X$, and $S(x) = T(y)$).

Let $x\in X_\mathcal{F}$. We will show that $\| S(x) \|_\mathcal{F} \leq \| x \|_\mathcal{F}$. Indeed, fix $\alpha<\kappa$ and a finite set $F_\alpha \subseteq I_\alpha$. There is a set $G_\alpha \subseteq I_\alpha$ such that
$|G_\alpha| \leq
|F_\alpha|$ and $\int_{G_\alpha} x \ d\nu_\alpha \geq \int_{F_\alpha} S(x) \ d\nu_\alpha$. Using this observation, for every $F\in \mathcal{F}$ one can find $G\in \mathcal{F}$ such that $\sum_{\langle n,\alpha\rangle \in G} |x(n, \alpha)| \geq
\sum_{\langle n,
\alpha\rangle \in F} |S(x)(n, \alpha)|$ and so $\| S(x) \|_\mathcal{F} \leq \| x \|_\mathcal{F}$.
\end{proof}

For a Banach space with unconditional basis which is not necessarily monotone, we cannot hope for an isometric embedding, but we using basically the same proof we can show that is can be embedded isomorphically in a combinatorial Banach space.

\begin{thm}\label{complemented} For every Banach space $X$ with a unconditional basis of size $\kappa$ there is a hereditary family $\mathcal{F}$ on $\kappa$ such that $X_\mathcal{F}$ contains a complemented isomorphic copy of $X$. 
\end{thm}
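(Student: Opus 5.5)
The plan is to reduce Theorem \ref{complemented} to Theorem \ref{complemented2} by renorming. Let $(e_\alpha)_{\alpha<\kappa}$ be an unconditional basis of $X$ with unconditional constant $C$. Define a new norm on $X$ by
\[ |||x||| = \sup\Big\{ \Big\|\sum_{\alpha} b_\alpha x(\alpha) e_\alpha\Big\| \colon |b_\alpha|\leq 1 \text{ for all } \alpha\Big\}. \]
The first step is to check that this is an equivalent norm. Taking $b\equiv 1$ gives $\|x\|\leq |||x|||$. For the upper bound, the standard convexity argument applies: every $b\in[-1,1]^\kappa$ restricted to a finite support is a convex combination of sign vectors $\varepsilon\in\{-1,1\}^\kappa$. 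Together with unconditionality this yields $|||x|||\leq C'\|x\|$, where $C'$ is the suppression/sign constant of the basis (at most $2C$, or $C$ depending on the convention). It suffices to verify this on $c_{00}(\kappa)$ and then extend by density.

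The second step is to check that $(e_\alpha)$ is monotone in the sense defined before Theorem \ref{complemented2} with respect to $|||\cdot|||$. Suppose $|b_\alpha|\leq |a_\alpha|$. Write $b_\alpha = t_\alpha a_\alpha$ with $|t_\alpha|\leq 1$, taking $t_\alpha=0$ when $a_\alpha=0$. Any multiplier $c$ with $|c_\alpha|\le1$ applied to $b$ equals the multiplier $ct$ applied to $a$, and $|c_\alpha t_\alpha|\leq 1$. Taking suprema gives $|||\sum b_\alpha e_\alpha|||\leq|||\sum a_\alpha e_\alpha|||$. One should also note that $(e_\alpha)$ is still an unconditional basis of $(X,|||\cdot|||)$ of size $\kappa$, which is immediate from the norm equivalence.

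The final step is to apply Theorem \ref{complemented2} to $(X,|||\cdot|||)$. This gives a hereditary family $\mathcal{F}$ (on $\omega\times\kappa$, which has cardinality $\kappa$, so it can be transported to $\kappa$ by a bijection) such that $X_\mathcal{F}$ contains a complemented subspace isometric to $(X,|||\cdot|||)$. That subspace is isomorphic to $(X,\|\cdot\|)$, which proves the claim.

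The only point needing care is the first step, namely that bounded multipliers, not just signs, are uniformly bounded operators. This is the classical fact that an unconditional basis has bounded multiplier constant. I would prove it via extreme points of $[-1,1]^n$ on finitely supported vectors, which is routine. Alternatively, one could mimic the proof of Theorem \ref{complemented2} directly using the original norm. That route loses the reduction "we may assume $y$ tests positive", so it would cost a constant, and the renorming avoids that bookkeeping.
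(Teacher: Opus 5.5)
Your reduction is correct, and it takes a different route from the paper's. The paper gives no separate argument for Theorem~\ref{complemented}. It only says that ``basically the same proof'' as for Theorem~\ref{complemented2} works, i.e.\ it takes the route you mention at the end: rerun the construction $\mathcal F=\{F\colon(\mu_\alpha(F))_\alpha\in B(X^*)\}$ with the original norm and absorb the constants.

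You instead pass to the lattice renorming $|||\cdot|||$, check monotonicity, and apply Theorem~\ref{complemented2} as a black box. This isolates the one place where monotonicity is used, and it is essentially what the rerun would have to do anyway. For a non-monotone basis the set $\{F\colon(\mu_\alpha(F))_\alpha\in B(X^*)\}$ need not be hereditary. Moreover, $\sup_{F\in\mathcal F}\sum_\alpha\mu_\alpha(F)|y(\alpha)|$ only sees $|y|$ and positive functionals. So some solid substitute for $B(X^*)$ is forced on you, and $|||\cdot|||$ is the canonical one.

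One small omission: the proof of Theorem~\ref{complemented2} tacitly needs $\|e_\alpha\|=1$ (norm-one functionals take values in $[-1,1]$, and $\|x_\alpha\|_{\mathcal F}=1$). Since $|||e_\alpha|||=\|e_\alpha\|$, you should normalize the basis before renorming.

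A caveat affects your argument and the paper's equally: both rest on the complementation half of Theorem~\ref{complemented2}, which does not hold as written. There a set $F\in\mathcal F$ is traded for some $G$ with $|G\cap I_\alpha|\le|F\cap I_\alpha|$, but membership of $G$ in $\mathcal F$ is governed by the numbers $\mu_\alpha(G)$, not by cardinalities.

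Concretely, take $X=\ell_2(\kappa)$, distinct $\alpha_0,\ldots,\alpha_{N-1}$ with $N=4^k$, and $z=\chi_{\{(0,\alpha_i)\colon i<N\}}$. Each point $(0,\alpha_i)$ in $F$ forces $\mu_{\alpha_i}(F)\ge 1/2$. Hence every $F\in\mathcal F$ contains at most four of them, and $\|z\|_{\mathcal F}\le 4$. On the other hand, $S(z)=\sum_{i<N}x_{\alpha_i}=T(\sum_{i<N}e_{\alpha_i})$ has norm $\sqrt N$. So $S$ is unbounded.

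The problem is not just the formula for $S$. Averaging a putative projection over the sign changes that are constant on each $I_\alpha$ makes it block-diagonal. The same test vectors, placed at a suitable fixed level $n$, then rule it out. So for this $\mathcal F$ the copy $T[\ell_2(\kappa)]$ is not complemented at all.

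The isometric-embedding half of Theorem~\ref{complemented2} is fine. So your argument does give an isomorphic embedding of $X$ into a combinatorial space. Complementedness, however, needs a different construction in Theorem~\ref{complemented2}, which neither your reduction nor the paper's ``same proof'' provides.
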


The above theorem says that every Banach space with monotone unconditional basis can be embedded into a combinatorial space. Now, we will show that, under certain assumptions, for a given $\kappa$ there is a combinatorial space containing copies of all Banach
spaces with unconditional bases of size $\kappa$. Moreover, these copies are complemented.

Recall the definition of a universal hypergraph.

\begin{df}
We say that a family $\ef$ of finite subsets of $\kappa$ is a \emph{$\kappa$-universal hypergraph} if for every family $\G$ of finite subsets of $\kappa$ there exists a mapping $\phi:\kappa \xrightarrow[]{} \kappa$ with \[
	A \in \G \iff \phi[A] \in \mathcal{H}.
    \]  
\end{df}

\begin{thm} Suppose that there is a $\kappa$-universal hypergraph. Then there is a hereditary family $\mathcal{F}$ of finite subsets of $\kappa$ such that $X_\mathcal{F}$ contains a complemented copy of
	every Banach space with unconditional basis of size $\kappa$.
\end{thm}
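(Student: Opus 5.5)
The plan is to combine Theorem \ref{complemented} with the universality of the hypergraph. Let $\mathcal{H}$ be a $\kappa$-universal hypergraph on $\kappa$ (the definition writes $\mathcal{H}$ for the universal family). First I will replace it by its hereditary closure, or check directly that the image condition restricted to hereditary $\G$ forces $\mathcal{H}$ to behave hereditarily on the range of $\phi$. Let $\F$ be the hereditary family generated by $\mathcal{H}$, and add singletons if necessary. For a hereditary $\G$ with embedding $\phi$, every subset of $\phi[A]$ with $A\in\G$ is again an image of a member of $\G$. Conversely, if $\phi[A]$ is contained in a member of $\mathcal{H}$ that is itself an image $\phi[B]$, then $A\subseteq B\in\G$. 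Hence on the range of $\phi$ the family $\F$ restricted to $\phi[\kappa]$ coincides with $\{\phi[A]\colon A\in\G\}$. I also need $\phi$ to be injective; if universality does not give this directly, I will encode injectivity into $\G$. For example, apply universality to $\G$ together with a gadget on extra points that separates points, or double the vertex set so that $\phi(\alpha)=\phi(\beta)$ would force a forbidden pair.

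Given a Banach space $X$ with unconditional basis of size $\kappa$, Theorem \ref{complemented} (working on $\Omega=\omega\times\kappa$, which has cardinality $\kappa$) provides a hereditary family $\G$ on $\kappa$ such that $X_\G$ contains a complemented copy of $X$. It therefore suffices to show that $X_\G$ is complementedly embedded in $X_\F$ for every hereditary $\G$ on $\kappa$. Take $\phi$ as above and set $S=\phi[\kappa]$. Then $\phi$ is a bijection $\kappa\to S$ that carries $\G$ onto $\F|S$. So $e_\alpha\mapsto e_{\phi(\alpha)}$ extends to an isometry from $X_\G$ onto the closed span of $\{e_\gamma\colon \gamma\in S\}$ in $X_\F$. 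The norm on $c_{00}(S)$ computed via $\F$ only involves $F\cap S$, and since $\F$ is hereditary these sets are exactly the members of $\F|S$. The projection $P_S$ has norm $1$, because the basis of $X_\F$ is $1$-unconditional: $\|P_Sx\|_\F\le\|x\|_\F$ directly from the definition of the norm. So the copy is complemented. Composing the two complemented embeddings gives a complemented copy of $X$ in $X_\F$.

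The main obstacle is the first step: making sure that the universal hypergraph yields a \emph{hereditary} family $\F$ whose traces on $\phi[\kappa]$ are exactly the images of $\G$, and that $\phi$ is injective. The definition only controls the sets $\phi[A]$ themselves, not subsets of members of $\mathcal{H}$ lying partly outside the range. My first attempt is to note that hereditarity of $\G$ forces every subset of $\phi[A]$ into $\mathcal{H}$, so $\mathcal{H}\cap[S]^{<\omega}$ is already hereditary. Since $P_S$ only sees traces $F\cap S$, I can then take $\F$ to be the family of finite $F$ such that every trace $F\cap\phi[\kappa]$ lies in $\mathcal{H}$. This fails because $\phi$ depends on $\G$. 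The fallback is simply $\F=\{F\colon F\subseteq H \text{ for some } H\in\mathcal{H}\}$, together with the verification sketched above. In that verification, a member of $\mathcal{H}$ containing $\phi[A]$ has trace on $S$ of the form $\phi[B]$, and it must be checked that $\phi[B]\in\mathcal{H}$. If that trace is not itself in $\mathcal{H}$, I will instead strengthen $\G$, replacing it by an auxiliary family on $\kappa\times 2$ whose universality forces the traces to behave.
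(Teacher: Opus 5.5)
You follow the paper's route: take the hereditary closure $\mathcal{F}$ of $\mathcal{H}$, pull a hereditary $\mathcal{G}$ back along $\phi$, project onto $S=\phi[\kappa]$ with the norm-one $P_S$, and compose with Theorem~\ref{complemented}. The second half is fine. The first half has a genuine gap, and your fallback does not close it.

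Your ``hence'' step is unjustified. A member of $\mathcal{H}$ containing $\phi[A]$ may have points outside $S$, and its trace on $S$ need not lie in $\mathcal{H}$. Changing $\mathcal{G}$ (on $\kappa\times 2$ or otherwise) cannot help, because the hereditary closure does not depend on $\mathcal{G}$, and it can be useless. The random hypergraph on $\omega$ (the Fra\"{\i}ss\'e limit of all finite hypergraphs, universal in the sense of the definition) has the extension property. So for every finite $V$ some $v\notin V$ satisfies $V\cup\{v\}\in\mathcal{H}$. Hence its hereditary closure is all of $[\omega]^{<\omega}$, and $X_\mathcal{F}=\ell_1$, which contains no copy of $c_0$.

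The same happens for the hypergraph in the proposition following this theorem. Any finite set with first coordinates $\alpha_1<\dots<\alpha_n$ becomes a member of $\mathcal{H}$ after adding $\langle\beta,\{\{\alpha_1,\dots,\alpha_n,\beta\}\}\rangle$ with $\beta>\alpha_n$. So on the range of $\phi_\mathcal{G}$ the closure is everything, whatever $\mathcal{G}$ is. The paper's own proof also takes the hereditary closure and asserts this step without argument, so your suspicion was well placed.

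The missing idea is to pass to the largest hereditary \emph{subfamily} of $\mathcal{H}$ rather than its closure:
$\mathcal{F}=\{F\in[\kappa]^{<\omega}\colon \mathcal{P}(F)\setminus\{\emptyset\}\subseteq\mathcal{H}\}\cup[\kappa]^{\le 1}$.
This is what your ``first attempt'' was reaching for. The difference is that you require \emph{all} nonempty subsets of $F$ to lie in $\mathcal{H}$, not the traces on the $\phi$-dependent set $\phi[\kappa]$, so $\mathcal{F}$ does not depend on $\mathcal{G}$.

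Let $\mathcal{G}$ be hereditary and cover $\kappa$, and let $\phi$ be an injective witness of universality for $\mathcal{G}$. Every subset of $\phi[A]$ equals $\phi[A']$ for some $A'\subseteq A$. Hence $A\in\mathcal{G}$ implies $\phi[A]\in\mathcal{F}$. Conversely, $\phi[A]\in\mathcal{F}$ with $|A|\ge 2$ implies $\phi[A]\in\mathcal{H}$, i.e.\ $A\in\mathcal{G}$. So $\mathcal{F}|\phi[\kappa]=\{\phi[A]\colon A\in\mathcal{G}\}$, and the rest of your argument goes through.

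Injectivity, which you rightly flag, can be forced as you suggest. Apply universality to the hereditary family on $\kappa\times 2\cong\kappa$ generated by $\mathcal{G}\times\{0\}$ and the pairs $\{(\alpha,0),(\alpha,1)\}$. If $\phi(\alpha,0)=\phi(\beta,0)$ with $\alpha\neq\beta$, then $\phi[\{(\alpha,0),(\alpha,1)\}]=\phi[\{(\beta,0),(\alpha,1)\}]$, although only the first pair belongs to the family. The trace of this family on $\kappa\times\{0\}$ is still $\mathcal{G}\times\{0\}$, so $X_\mathcal{G}$ is recovered on $\phi[\kappa\times\{0\}]$.
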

\begin{proof}
	Let $\mathcal{H}$ be a $\kappa$-universal hypergraph and let $\mathcal{F}$ be its hereditary closure (i.e. $\mathcal{F}$ contains all the subsets of elements of $\mathcal{H}$). Then for every hereditary family $\mathcal{G}$ on $\kappa$ there is $\phi \colon \kappa \to \kappa$ such that $\{A\colon \phi[A] \in
	\mathcal{H}\} = \mathcal{G}$. It follows that $\{\phi(\alpha)\colon \alpha<\kappa\}$ generates a complemented copy of $X_\mathcal{G}$ and so $X_\mathcal{F}$ contains a complemented copy of every combinatorial Banach space with density $\kappa$.
	By Theorem \ref{complemented}, in fact
$X_\mathcal{F}$ contains a complemented copy of every Banach space with unconditional basis of size $\kappa$
\end{proof}

It is well-known that there is an $\omega$-universal hypergraph $\mathcal{H}_\omega$ (so called, random hypergraph, which can be realized e.g. as a Fraisse limit of finite hypergraphs).  The space $X_\mathcal{F}$ for $\mathcal{F}$ being the
hereditary closure of $\mathcal{H}_\omega$ is isomorphic to $\omega$-Pe\l czy\'nski space (see \cite{ZOO}). The existence of universal hypergraphs for other infinite cardinals may depend on the axioms. In general, for every infinite

$\kappa$ it is consistent that there is a $\kappa$-universal hypergraph and so that there is a $\kappa$-version of Pe\l czy\'nski space.

There is a rich literature concerning the existence of universal structures (see e.g. \cite{Shelah88}), although in the context of universality, graphs rather than hypergraphs have been studied. The following proposition follows from a general theory
of structures with an amalgamation
property. We enclose a proof as it is short, and perhaps new.

\begin{prop}
    Suppose that $\kappa$ is a cardinal satisfying $\forall \lambda < \kappa \; 2^\lambda \leq \kappa$. Then there exists a $\kappa$-universal hypergraph.
\end{prop}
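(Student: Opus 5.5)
We construct $\mathcal{H}$ on $\kappa$ directly, by a transfinite union of length $\kappa$. The key property to aim for is \emph{extension}. Call a family $\mathcal{H}\subseteq[\kappa]^{<\omega}$ \emph{$\kappa$-saturated} if it satisfies the following condition. For every $A\subseteq\kappa$ with $|A|<\kappa$ and every family $\mathcal{P}$ of finite subsets of $A$, there is $\gamma\in\kappa\setminus A$ such that for every finite $B\subseteq A$,
\[ B\cup\{\gamma\}\in \mathcal{H} \iff B\in\mathcal{P}. \]
(The sets $B\cup\{\gamma\}$ with $B\subseteq A$ finite are exactly the finite subsets of $A\cup\{\gamma\}$ containing $\gamma$.) Given saturation, universality follows by a standard forth argument. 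Let $\mathcal{G}$ be a family of finite subsets of $\kappa$. Define $\phi(\alpha)$ by recursion on $\alpha<\kappa$. Put $A_\alpha=\phi[\alpha]$; then $|A_\alpha|<\kappa$. By induction $\phi|\alpha$ is injective, so for each finite $B\subseteq A_\alpha$ the set $\phi^{-1}[B]\cap\alpha$ is well defined. Let
\[ \mathcal{P}_\alpha=\{B\in[A_\alpha]^{<\omega}\colon (\phi^{-1}[B]\cap\alpha)\cup\{\alpha\}\in\mathcal{G}\}, \]
and choose $\phi(\alpha)=\gamma\notin A_\alpha$ as given by saturation. Then $\phi$ is injective. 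For a finite nonempty $C\subseteq\kappa$ with maximum $\alpha$, we have $C\in\mathcal{G}$ iff $\phi[C]\in\mathcal{H}$, checked at the stage $\alpha$. The empty set and the cases with $|\phi[C]|<|C|$ are handled by convention: put $\emptyset$ in $\mathcal{H}$ according to whether $\emptyset\in\mathcal{G}$, or simply note the definition is insensitive to this. One small point is that $\mathcal{G}$ is arbitrary, so $\emptyset$ might lie in one family and not the other. I would either restrict attention to nonempty sets or adjust by a trivial case distinction.

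It remains to build a saturated $\mathcal{H}$, and this is where the hypothesis $2^\lambda\le\kappa$ for $\lambda<\kappa$ enters. Count the requirements: the number of pairs $(A,\mathcal{P})$ with $|A|=\lambda<\kappa$ is at most
\[ \kappa^{\lambda}\cdot 2^{\lambda}. \]
To keep this at most $\kappa$, I would not quantify over all small $A$ at once. Instead, build an increasing chain of sets $D_\xi\subseteq\kappa$ for $\xi<\kappa$, with $|D_\xi|<\kappa$ when $\kappa$ is a limit-type cardinal, together with coherent $\mathcal{H}_\xi\subseteq[D_\xi]^{<\omega}$. Each $\mathcal{H}_{\xi+1}$ end-extends $\mathcal{H}_\xi$, meaning it adds only sets meeting $D_{\xi+1}\setminus D_\xi$. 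At stage $\xi+1$, for every $\mathcal{P}\subseteq[D_\xi]^{<\omega}$ (there are $2^{|D_\xi|}\le\kappa$ of them) add a fresh point $\gamma_{\mathcal{P}}$. Put $B\cup\{\gamma_{\mathcal{P}}\}\in\mathcal{H}$ iff $B\in\mathcal{P}$, and declare that no set containing two new points lies in $\mathcal{H}$. Realizing all these witnesses for every $\mathcal{P}$ over $D_\xi$ simultaneously, rather than one requirement per stage, is what avoids the $\kappa^\lambda$ count. If $\kappa$ is a successor, say $\kappa=\mu^+$ with $2^\mu=\kappa$, use $|D_\xi|\le\mu$ and a chain of length $\kappa$ with bookkeeping. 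Since $\mathcal{P}\subseteq[A]^{<\omega}$ for $A\subseteq D_\xi$ is also a subfamily of $[D_\xi]^{<\omega}$, witnesses over $D_\xi$ serve every $A\subseteq D_\xi$. Finally set $\mathcal{H}=\bigcup_\xi\mathcal{H}_\xi$ and identify $\bigcup_\xi D_\xi$ with $\kappa$.

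The main obstacle is showing that every $A$ with $|A|<\kappa$ lies inside some $D_\xi$. When $\kappa$ is regular this holds automatically, since the chain has length $\kappa$ and $|A|<\kappa=\cf(\kappa)$. When $\kappa$ is singular, the hypothesis gives $2^\lambda\le\kappa$ for all $\lambda<\kappa$, but a set of size $\lambda\ge\cf\kappa$ can be cofinal in the chain. There I would use stages $D_\xi$ of size $<\kappa$ but cofinally large, and handle $A$ by first covering it with $D_\xi$ for a suitable $\xi$. This requires arranging the chain so that every small set is contained in a single stage. The cleanest way is to let $\phi$ choose images only among points added after $A$'s stage, and to note that in the forth argument $A_\alpha=\phi[\alpha]$ can itself be kept inside an initial stage by choosing $\phi(\alpha)$ in the stage immediately after the one containing $\phi[\alpha]$. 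Verifying that this stays possible at limit $\alpha$ of cofinality $\ge\cf\kappa$ is the step I expect to require the most care.
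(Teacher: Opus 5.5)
\textbf{The singular case is a genuine gap.} Your reduction of universality to saturation via the forth argument is correct. For regular $\kappa$ a chain construction does give a saturated family, provided you use bookkeeping (one witness per stage) whenever $\kappa$ is not a strong limit. Adding all $2^{|D_\xi|}$ witnesses at once already produces a stage of size $\kappa$ for $\kappa=\omega_1$ under CH, or for a weakly inaccessible $\kappa$ with $2^{\aleph_0}=\kappa$.

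The problem is singular $\kappa$, which the hypothesis allows. Every singular strong limit, e.g.\ $\kappa=\beth_\omega$, satisfies it, and this is the natural ZFC witness for the corollary that follows. For singular $\kappa$ the object you aim for does not exist. Let $\mu=\cf\kappa<\kappa$ and write $\kappa=\bigcup_{i<\mu}M_i$ with $|M_i|<\kappa$. Saturation over $M_i\cup\{a_j\colon j<i\}$ gives a point $a_i$ outside this set with $\{\beta,a_i\}\in\mathcal{H}$ for all $\beta\in M_i$. Then $A=\{a_i\colon i<\mu\}$ has size $\mu<\kappa$. But no $\gamma\notin A$ has $\{a_i,\gamma\}\notin\mathcal{H}$ for all $i$, since $\gamma\in M_i$ for some $i$.

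Your proposed remedy does not help either. If $\phi(\alpha)$ is a witness added over a stage $D_{\xi_\alpha}\supseteq\phi[\alpha]$, then $\xi_{\alpha+1}>\xi_\alpha$. So $\phi$ must climb through $\kappa$ many stages, each of size $<\kappa$, since over a stage of size $\kappa$ there are $2^\kappa$ patterns. Realizing all patterns over $D_\xi$ forces $|D_{\xi+1}|\ge 2^{|D_\xi|}$, hence $|D_{\omega\cdot 2}|\ge\beth_\omega$. So for $\kappa=\beth_\omega$ the recursion already breaks down at $\alpha=\omega\cdot 2$.

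\textbf{The missing idea} is to let a new vertex see only the \emph{indices} of earlier vertices, not the earlier vertices themselves. Then at level $\alpha$ only $2^{|\alpha|}\le\kappa$ patterns need to be realized, however many vertices lie below. This is what the paper does, essentially as follows.
\begin{enumerate}
\item The vertices are pairs $\langle\alpha,F\rangle$ with $\alpha<\kappa$ and $F\subseteq[\alpha+1]^{<\omega}$; there are $\kappa$ of them by the hypothesis.
\item A finite set belongs to $\mathcal{H}$ iff its first coordinates $\alpha_1<\dots<\alpha_n$ are distinct and $\{\alpha_1,\dots,\alpha_n\}\in F_n$, the label of the top vertex.
\item The embedding is $\phi(\alpha)=\langle\alpha,\mathcal{G}\cap[\alpha+1]^{<\omega}\rangle$.
\end{enumerate}
There is no recursion, no saturation and no cofinality issue, and the argument is uniform in $\kappa$.
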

\begin{proof}
    Let \[
    S = \bigcup_{\alpha < \kappa} \kappa \times \mathcal{P}([\alpha + 1]^{<\omega}).\]
	By assumption on $\kappa$ we conclude that $|S| = \kappa$, so we may construct the $\kappa$-universal hypergraph on $S$.
 Define a family $\mathcal{H}$ of finite subsets of $S$ in the following way \[
	 \{\langle \alpha_1, F_1\rangle, ..., \langle \alpha_n, F_n\rangle \} \in \mathcal{H} \iff \{\alpha_1,...,\alpha_n\} \in F_n,
    \]
where $\alpha_1 < ... < \alpha_n$.

We claim that $\mathcal{H}$ is a $\kappa$-universal hypergraph. Indeed, fix $\G \subs [\kappa]^{<\omega}$. Let \[
		\phi_\G (\alpha) = (\alpha, \G \cap \mathcal{P}([\alpha + 1]^{<\omega}).
    \]Let $A = \{\alpha_1 < ... < \alpha_n\}$. Then \begin{align*}
		&\phi_\G[A] \in \mathcal{H} \iff \{(\alpha_1, \G \cap \mathcal{P}([\alpha_1 + 1]^{<\omega}), ..., (\alpha_n, \G \cap \mathcal{P}([\alpha_n + 1]^{<\omega})\} \in \mathcal{H} \iff \\
				& \iff \{\alpha_1, ..., \alpha_n\} \in \G \cap \mathcal{P}([\alpha_n + 1]^{<\omega}) \iff \{\alpha_1, ..., \alpha_n \} \in \G . 
    \end{align*}
\end{proof}
\begin{cor}
	If CH holds, then there exists a $\omega_1$-universal hypergraph and, more generally, if GCH holds, then for every $\kappa$ there exists a $\kappa$-universal hypergraph.
	Also, in ZFC, there exists an uncountable cardinal $\kappa$ with a $\kappa$-universal hypergraph.
\end{cor}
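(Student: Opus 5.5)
The first two claims follow from the preceding Proposition once we check its cardinal hypothesis. Under $\mathsf{CH}$, take $\kappa=\omega_1$. Every $\lambda<\omega_1$ is countable, and for finite $\lambda$ the value $2^\lambda$ is finite. Hence $2^\lambda\leq 2^\omega=\omega_1=\kappa$ for all $\lambda<\kappa$, and the Proposition gives an $\omega_1$-universal hypergraph.

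Under $\mathsf{GCH}$, fix an infinite $\kappa$ and $\lambda<\kappa$. If $\lambda$ is finite, $2^\lambda$ is finite. If $\lambda$ is infinite, $\mathsf{GCH}$ gives $2^\lambda=\lambda^+\leq\kappa$, since $\lambda<\kappa$. For $\kappa=\omega$ all $\lambda<\kappa$ are finite, so the hypothesis holds trivially; this also recovers the random hypergraph $\mathcal{H}_\omega$. So the Proposition applies to every infinite $\kappa$.

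For the $\mathsf{ZFC}$ claim we need an uncountable $\kappa$ with $2^\lambda\leq\kappa$ for all $\lambda<\kappa$. The natural choice is the strong limit cardinal $\kappa=\beth_\omega=\sup_n\beth_n$, where $\beth_0=\omega$ and $\beth_{n+1}=2^{\beth_n}$. It is uncountable, since it is above $\beth_1=2^\omega$. If $\lambda<\beth_\omega$, then $\lambda<\beth_n$ for some $n$, so $2^\lambda\leq 2^{\beth_n}=\beth_{n+1}<\beth_\omega$. The Proposition then yields a $\beth_\omega$-universal hypergraph. More generally, any strong limit cardinal works, for instance $\beth_\delta$ for a limit ordinal $\delta$.

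There is no real obstacle. The only care needed is to choose a strong limit cardinal, rather than a successor one such as $\mathfrak{c}^+$, for the $\mathsf{ZFC}$ part, since only a strong limit makes the hypothesis of the Proposition provable without extra axioms.
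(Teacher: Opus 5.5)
Your proof is correct and takes the intended route: the paper gives no separate argument and treats the corollary as immediate from the preceding Proposition, while you check its hypothesis $2^\lambda\le\kappa$ for all $\lambda<\kappa$ under $\mathsf{CH}$, under $\mathsf{GCH}$, and in $\mathsf{ZFC}$ for a strong limit cardinal such as $\beth_\omega$. One small point: for $\kappa=\omega$ the Proposition yields some $\omega$-universal hypergraph rather than literally the random hypergraph $\mathcal{H}_\omega$, which does not affect the argument.
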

    
CH is not necessary for the existence of $\omega_1$-universal hypergraph (see e.g. \cite{Mekler90}). However, it is consistent that there are no $\omega_1$-universal hypergraph. It does not imply automatically that consistently
there is no $\omega_1$-Pe\l czy\'nski space but, in the next section, we show that this is indeed the case.

\section{Generic families and the universal space problem}

We will start with defining probably the most natural forcing for adding a hereditary family of finite subsets of $\omega_1$. 

Let $\mathbb{P}$ consists of pairs $p = (F_p,G_p)$, where $F_p\subseteq [\kappa]^{<\omega}$ is hereditary and finite, $G_p\subseteq [\kappa]^{<\omega}$ if finite, and $F_p \cap G_p \subseteq [\kappa]^{\leq 1}$. Let $q\leq p$ if $F_p \subseteq F_q$ and $G_p \subseteq G_q$. The forcing $\mathbb{P}$ introduces
generically a hereditary family $\mathcal{F}$ (simply $\mathcal{F} = \bigcup_{p\in G} F_p$, where $G$ is a $\mathbb{P}$-generic filter) which covers $\kappa$. The families $G_p$ prevents $\mathcal{F}$ to be the whole of $[\kappa]^{<\omega}$. 

For $p\in \mathbb{P}$ let $\mathrm{supp}(p) = \bigcup F_p \cup \bigcup G_p$ and for compatible $p,q$ let $p\cup q = (F_p \cup F_q, G_p \cup G_q)$.

\begin{prop} $\mathbb{P}$ is ccc.
\end{prop}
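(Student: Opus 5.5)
We prove that $\mathbb{P}$ is ccc by the standard $\Delta$-system argument. Fix an uncountable family $\{p_\xi\colon \xi<\omega_1\}\subseteq \mathbb{P}$; we look for two compatible members. Each $\mathrm{supp}(p_\xi)$ is a finite subset of $\kappa$. By the $\Delta$-system lemma we pass to an uncountable subfamily whose supports form a $\Delta$-system with root $R$. Since $R$ is finite, there are only finitely many possible traces $(F_{p_\xi}\cap \mathcal{P}(R), G_{p_\xi}\cap\mathcal{P}(R))$, so we may also assume these traces are the same for all $\xi$. Fixing the sizes $|\mathrm{supp}(p_\xi)|$ and the isomorphism type of each condition over $R$ is also possible but should not be needed.

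Next, fix $\xi\ne\eta$ and let $r=p_\xi\cup p_\eta=(F_{p_\xi}\cup F_{p_\eta}, G_{p_\xi}\cup G_{p_\eta})$. A union of two finite hereditary families is finite and hereditary, and $r$ extends both conditions. So it only remains to check that $r\in\mathbb{P}$, i.e. that $(F_{p_\xi}\cup F_{p_\eta})\cap (G_{p_\xi}\cup G_{p_\eta})\subseteq[\kappa]^{\le 1}$.

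The only nontrivial case is a mixed intersection, say $A\in F_{p_\xi}\cap G_{p_\eta}$ with $|A|\ge 2$.
\begin{itemize}
\item Since $A\subseteq \mathrm{supp}(p_\xi)\cap\mathrm{supp}(p_\eta)=R$, we have $A\in G_{p_\eta}\cap\mathcal{P}(R)$.
\item By the uniform traces, $G_{p_\eta}\cap\mathcal{P}(R)=G_{p_\xi}\cap\mathcal{P}(R)$, so $A\in G_{p_\xi}$.
\item Then $A\in F_{p_\xi}\cap G_{p_\xi}$ with $|A|\ge2$, which contradicts $p_\xi\in\mathbb{P}$.
\end{itemize}
The symmetric case is the same. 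Hence any two conditions in the refined family are compatible, so there is no uncountable antichain.

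The main (mild) obstacle is the bookkeeping just described: making sure every set in an $F$ or $G$ component is contained in the support, so that sets common to both sides land inside the root. This holds by the definition of $\mathrm{supp}(p)$, and then uniformizing the traces on $R$ finishes the argument. In fact the argument gives that $\mathbb{P}$ is Knaster, consistent with its claimed equivalence to adding Cohen reals.
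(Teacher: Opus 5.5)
Your proposal is correct and follows the paper's proof: a $\Delta$-system on the supports, uniformizing the traces of $F_p$ and $G_p$ on the root, and observing that any set lying in both an $F$-part and a $G$-part must sit inside the root, where $F_p\cap G_p\subseteq[\kappa]^{\le 1}$ applies. Your explicit check of the mixed case $A\in F_{p_\xi}\cap G_{p_\eta}$ spells out what the paper leaves implicit.
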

\begin{proof} Let $(p_\alpha)_{\alpha<\omega_1}$ be an uncountable subfamily of $\mathbb{P}$ and denote $F_\alpha = F_{p_\alpha}$, $G_\alpha = G_{p_\alpha}$. Using
	$\Delta$-system lemma we may assume that the family $(\mathrm{supp}(p_\alpha))_\alpha$ forms a $\Delta$-system with a root $\Delta$ and there is $F'$, $G'$ such that $F_\alpha|\Delta = F'$ and $G_\alpha|\Delta 	= G'$ for each $\alpha$.
	For $\alpha, \beta < \omega_1$, $p_\alpha$ and $p_\beta$ are compatible: it follows from the fact that $|F' \cap G'|\leq 1 $ and the supports of $p_\alpha$, $p_\beta$ are disjoint outside of $\Delta$.
\end{proof}

So, using $\mathbb{P}$ we may add new hereditary family $\mathcal{F}$ on $\kappa$, without collapsing any cardinals. 

Note that if $T$ is an isomorphism, then there is an integer $K$ such that $1/K \| x\| < \|T(x)\| < K \| x\|$.

\begin{thm}\label{generic} Let $\dot{\mathcal{F}}$ be the generic family on $\omega_1$ added by forcing $\mathbb{P}$ and let $\mathcal{H}$ be a hereditary family on $\omega_1$ from the ground model. The space $X_{\dot{\mathcal{F}}}$ cannot be embedded into $X_\mathcal{H}$.
\end{thm}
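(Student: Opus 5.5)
The plan is to argue by contradiction with a $\Delta$-system argument. The key point is that we can force a finite set of basis vectors of $X_{\dot{\mathcal F}}$ to behave either like an $\ell_1$-block or like a $c_0$-block. Their images in $X_\mathcal{H}$, however, are approximated by ground-model vectors whose $\mathcal{H}$-norm does not depend on that choice.

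Suppose some $p\in\mathbb{P}$ forces that $\dot T\colon X_{\dot{\mathcal F}}\to X_\mathcal{H}$ is an isomorphic embedding. Strengthening $p$, we may assume $p$ decides an integer $K$ with $\frac1K\|x\|\le\|\dot T x\|\le K\|x\|$. Fix $\varepsilon>0$ small, say $\varepsilon<\frac1{4K}$. Vectors in $c_{00}(\omega_1)$ with rational coefficients are dense in $X_\mathcal{H}$ in any extension, and they are ground-model objects. So for every $\alpha<\omega_1$ we can pick $p_\alpha\le p$ and a ground-model finitely supported rational vector $y_\alpha$ such that $p_\alpha\Vdash \|\dot T e_\alpha-y_\alpha\|_\mathcal{H}<\varepsilon$. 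Extending further, we may assume $\alpha\in\mathrm{supp}(p_\alpha)$.

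Next we use the $\Delta$-system lemma, as in the proof that $\mathbb{P}$ is ccc. We thin out to an uncountable $I\subseteq\omega_1$ such that the supports $\mathrm{supp}(p_\alpha)$, $\alpha\in I$, form a $\Delta$-system with root $\Delta$, and the traces $F_{p_\alpha}|\Delta$ and $G_{p_\alpha}|\Delta$ are constant. Since $\Delta$ is finite, after discarding finitely many indices we also have $\alpha\notin\Delta$ for all $\alpha\in I$.

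Then for any finite $A\subseteq I$ the union $q_A=\bigcup_{\alpha\in A}p_\alpha$ is a condition. The reason is that every $F\in F_{p_\alpha}$ and every $H\in G_{p_\beta}$ lie inside the respective supports. Two such sets can share two or more points only inside $\Delta$, or when $\alpha=\beta$, and the agreement on $\Delta$ handles both cases.

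Now fix $A\subseteq I$ with $|A|=n$, where $n$ will be chosen large. Since the elements of $A$ lie in pairwise distinct petals outside the root, no set of $G_{q_A}$ contains two points of $A$. Likewise no set of $F_{q_A}$ contains two points of $A$. This gives two extensions of $q_A$.
\begin{enumerate}
\item Let $r_1$ be $q_A$ with all subsets of $A$ added to the $F$-part. Then $r_1\Vdash\|\sum_{\alpha\in A}e_\alpha\|_{\dot{\mathcal F}}=n$.
\item Let $r_2$ be $q_A$ with all pairs from $A$ added to the $G$-part. In any generic extension through $r_2$ every member of $\dot{\mathcal F}$ meets $A$ in at most one point. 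Hence $r_2\Vdash\|\sum_{\alpha\in A}e_\alpha\|_{\dot{\mathcal F}}=1$.
\end{enumerate}
In both cases we check that $F$-part and $G$-part meet only in singletons. This uses exactly the disjointness observations above.

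Both $r_1$ and $r_2$ extend every $p_\alpha$ with $\alpha\in A$, so both force $\|\dot T(\sum_{\alpha\in A}e_\alpha)-\sum_{\alpha\in A}y_\alpha\|<n\varepsilon$. The number $N=\|\sum_{\alpha\in A}y_\alpha\|_\mathcal{H}$ is computed in the ground model, since $\mathcal{H}$ and the $y_\alpha$ are ground-model objects. From $r_1$ we get $N\ge n/K-n\varepsilon$, and from $r_2$ we get $N\le K+n\varepsilon$. Hence $n(\frac1K-2\varepsilon)\le K$, and with $\varepsilon<\frac1{4K}$ this fails once $n>2K^2$.

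The step I expect to need the most care is the compatibility bookkeeping. We must verify that $q_A$, $r_1$ and $r_2$ are genuine conditions, meaning that the hereditary closure added to $F$ does not clash with existing $G$-sets. We must also check that the norm computations in the generic extension really give $n$ and $1$. For the value $1$ this needs the observation that any later condition keeps the pairs of $A$ in its $G$-part, so no generic $F$-set can ever contain two points of $A$.
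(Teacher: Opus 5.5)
Your proof is correct. Its skeleton matches the paper's: a $\Delta$-system of conditions, a free finite set of indices, and the two extensions that add $\mathcal{P}(S)$ to the $F$-part or $[S]^2$ to the $G$-part. The decisive step, however, is done differently, and your version is the one that works.

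The paper lets $p_\alpha$ decide only the number $\|\dot x_\alpha\|_{\mathcal H}$ up to $1/K^3$. It then asserts that $q=\bigcup_{\alpha\in S}p_\alpha$ forces either $\|\sum_{\alpha\in S}\dot x_\alpha\|_{\mathcal H}<K^2$ or $\|\sum_{\alpha\in S}\dot x_\alpha\|_{\mathcal H}>K$. As written, this does not follow. The individual norms say nothing about how the vectors $\dot x_\alpha$ sit relative to one another, and $q$ forcing the (trivially true) disjunction is not the same as forcing one disjunct. Strengthening $q$ to decide which disjunct holds could destroy the freedom on $S$ that both cases rely on.

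Your choice to decide ground-model, finitely supported rational approximants $y_\alpha$ of $\dot T e_\alpha$ is exactly what closes this gap. The number $N=\|\sum_{\alpha\in A}y_\alpha\|_{\mathcal H}$ is absolute, so $r_1$ and $r_2$ are measured against the same quantity. The estimate $n(\tfrac1K-2\varepsilon)\le K$ then gives the contradiction.

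A smaller gain is that you put $\alpha$ into $\supp(p_\alpha)$ and discard the finitely many indices lying in $\Delta$. This makes every finite $A\subseteq I$ automatically free, replacing the paper's separate argument via the minima of the petals.
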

\begin{proof}
	Suppose that there is $\dot{T}$, a $\mathbb{P}$-name for a function $\dot{T}\colon X_{\dot{\mathcal{F}}} \to X_\mathcal{H}$ such that $1\Vdash$``$\dot{T}$ is an isomorphic embedding''. Denote $\dot{x}_\alpha = \dot{T}(e_\alpha)$ for each $\alpha<\omega_1$. 
	For $k>1$ we say that a condition $p$ $k$-decides $\dot{x}_\alpha$ if there is an interval $I$ of length $1/k$ such that $p\Vdash \| \dot{x}_\alpha \|_{\mathcal{H}} \in I$. 

	Let $p\in \mathbb{P}$. Passing to a stronger condition if needed we may assume that there is $K\in \omega$, $K>1$, such that $p\Vdash 1/K \| x\| < \|T(x)\| < K \| x\|$.
	
For each $\alpha$ fix $p_\alpha \leq p$ such that $p_\alpha$ $K^3$-decides $x_\alpha$. There is an uncountable $\Gamma \subseteq \omega_1$ such that $\mathrm{supp}(p_\alpha)_{\alpha\in \Gamma}$ form a $\Delta$-system with a root $\Delta$ and that
$p_\alpha$'s agree on $\Delta$.

Notice that for each $n$ we can find a set $S\subseteq \Gamma$ such that $|S| = n$ which is free, i.e. for $\alpha\ne \beta \in S$ we have $\alpha \notin \mathrm{supp}(p_\beta)$. Indeed, without loss of generality $(p_\alpha)_{\alpha \in \Gamma}$ are pairwise
different and so the set $\{\min (\mathrm{supp}(p_\alpha)\setminus \Delta) \colon \alpha \in \Gamma\}$ is unbounded. Using this fact it is easy to find the required $S$. Notice also that $S \cap \Delta = \emptyset$.

Find a free $S\subseteq \Gamma$ of size $K^3$ and let $q = \bigcup_{\alpha \in S} p_\alpha$. Then $q$ is a condition. Also, since $q$ $K^3$-decides all $x_\alpha$ in $S$ we have that $q\Vdash \| \sum_{\alpha\in S} x_\alpha\|_{\mathcal{H}} < K^2$ or
$q\Vdash \| \sum_{\alpha\in S} x_\alpha\|_{\mathcal{H}} > K$:
\medskip

\textbf{Case 1.} $q \Vdash \| \sum_{\alpha \in S} x_\alpha \|_\mathcal{H} < K^2$. Then let $r = (F_q \cup \mathcal{P}(S), G_q)$. Since for every $T\in [S]^2$ we have $T\notin G_q$, $r$ is a condition. But 
\[ r\Vdash \| \sum_{\alpha \in S} e_\alpha \|_{\dot{\mathcal{F}}} = K^3 \mbox{ and } \| \sum_{\alpha \in S} x_\alpha \|_\mathcal{H} < K^2, \]
and so $r\leq p$ forces that $K$ is not appropriate isomorphism constant, a contradiction.
\medskip

\textbf{Case 2.} $q \Vdash \| \sum_{\alpha \in S} x_\alpha \|_\mathcal{H} > K$. Then let $r = (F_q, G_q \cup [S]^2)$. Again, since no element of $[S]^2$ belongs to $F_q$, $r$ is a condition and 
\[ r\Vdash \| \sum_{\alpha \in S} e_\alpha \|_{\dot{\mathcal{F}}} = 1  \mbox{ and } \| \sum_{\alpha \in S} x_\alpha \|_\mathcal{H} > K, \]
which again means that $r\leq p$ forces that $K$ is not a proper constant, a contradiction.
\end{proof}

\begin{thm} Consistently, there is no $\omega_1$-Pe\l czy\'nski space.
\end{thm}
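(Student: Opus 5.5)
The plan is to force with a finite support iteration (equivalently, a finite support product, since $\mathbb{P}$ is a Cohen-type forcing) of length $\omega_2$ of copies of $\mathbb{P}$ on $\omega_1$, over a model of, say, GCH. Call it $\mathbb{P}_{\omega_2}$. Each iterand is ccc, so $\mathbb{P}_{\omega_2}$ is ccc, preserves cardinals, and forces $2^{\omega}=2^{\omega_1}=\omega_2$. Suppose that in the extension $V[G_{\omega_2}]$ there were an $\omega_1$-Pe\l czy\'nski space $X$, with unconditional basis $(u_\alpha)_{\alpha<\omega_1}$.

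\textbf{Step 1 (reflection).} By Theorem \ref{complemented} (with $\kappa=\omega_1$) there is a hereditary family $\mathcal{H}$ on $\omega_1$ such that $X_\mathcal{H}$ contains a (complemented) isomorphic copy of $X$. $X_\mathcal{H}$ is itself a space with unconditional basis of size $\omega_1$, so any space embedding into $X$ also embeds into $X_\mathcal{H}$. Hence it suffices to show that no hereditary family $\mathcal{H}$ on $\omega_1$ in $V[G_{\omega_2}]$ has the property that $X_\mathcal{G}$ embeds into $X_\mathcal{H}$ for every hereditary $\mathcal{G}$ on $\omega_1$. Now $\mathcal{H}\subseteq[\omega_1]^{<\omega}$ is a subset of a set of size $\omega_1$, and the iteration is ccc of length $\omega_2$ with finite supports. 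So a nice name for $\mathcal{H}$ involves only $\omega_1$ many antichains, each countable, and it is therefore decided by some initial stage: $\mathcal{H}\in V[G_\delta]$ for some $\delta<\omega_2$.

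\textbf{Step 2 (the next generic).} Consider the family $\mathcal{F}_\delta$ added by the $\delta$-th coordinate. It is $\mathbb{P}$-generic over $V[G_\delta]$, and $V[G_{\omega_2}]=V[G_{\delta+1}][G_{\text{tail}}]$. By Theorem \ref{generic} applied over $V[G_\delta]$, the space $X_{\mathcal{F}_\delta}$ does not embed into $X_\mathcal{H}$ in $V[G_{\delta+1}]$. Since $X_{\mathcal{F}_\delta}$ embeds into $X$ and hence into $X_\mathcal{H}$ in the final model, a contradiction requires upgrading this to the full extension.

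\textbf{Step 3 (main obstacle).} The delicate point is that the tail forcing might add an embedding. I would handle it by exploiting that the iteration is really a product: $\mathbb{P}_{\omega_2}\cong \mathbb{P}_\delta\times\mathbb{P}\times\mathbb{P}_{\text{tail}}$. Reorder it as $V[G_\delta][G_{\text{tail}}][G(\delta)]$, so that $G(\delta)$ is $\mathbb{P}$-generic over $W=V[G_\delta][G_{\text{tail}}]$, and $\mathcal{H}\in W$. Then Theorem \ref{generic}, applied in the ground model $W$, gives directly that $X_{\mathcal{F}_\delta}$ does not embed into $X_\mathcal{H}$ in $W[G(\delta)]=V[G_{\omega_2}]$. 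The proof of Theorem \ref{generic} used only that $\mathcal{H}$ is in the ground model and the $\Delta$-system argument, and both remain valid over $W$. One must also check that, in $W$, $\mathbb{P}$ is still the same poset; this holds because it consists of finite objects. With this, the contradiction shows that there is no $\omega_1$-Pe\l czy\'nski space in $V[G_{\omega_2}]$.
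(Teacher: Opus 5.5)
Your proposal is correct and follows the paper's route: a finite-support iteration of $\mathbb{P}$ of length $\omega_2$, reduction to some $X_\mathcal{H}$ via Theorem~\ref{complemented}, reflection of $\mathcal{H}$ to an intermediate stage $\delta$, and Theorem~\ref{generic} applied to the family added at stage $\delta$. Your Step~3 is more careful than the paper, which invokes Theorem~\ref{generic} without addressing that the tail of the iteration might add an embedding; rewriting the iteration as a product, so that the $\delta$-th generic is $\mathbb{P}$-generic over $W=V[G_\delta][G_{\mathrm{tail}}]\ni\mathcal{H}$ (with $\mathbb{P}^W=\mathbb{P}$), is exactly what closes that point.
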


\begin{proof} We consider a finite support iteration of length $\omega_2$ of the forcing $\mathbb{P}$ defined above. Let $G$ be the generic.

	We claim that in $V[G]$ there is no $\omega_1$-Pe\l czy\'nski space. Indeed, suppose that $X$ is such a space. Without loss of generality, by Theorem \ref{complemented}, we may assume that $X=X_\mathcal{H}$ for some hereditary family on
	$\omega_1$. As $\mathcal{H}$ is of cardinality $\omega_1$, it appears at an intermediate step $\alpha<\omega_2$ of the iteration. Let $\mathcal{F}$ be the family added generically at step $\alpha+1$. By Theorem \ref{generic} $X_\mathcal{F}$
	cannot be embedded isomorphically into $X_\mathcal{H}$, a contradiction.
\end{proof}

The Banach spaces introduced generically by $\mathbb{P}$ has another peculiar property. Not only it cannot be embedded into any \emph{old} Banach space isomorphically. Also, it does not contain an isomorphic copy of any \emph{old} Banach space of
uncountable density (with unconditional basis). In particular it does contain neither a copy of $\ell_1(\omega_1)$ nor of $c_0(\omega_1)$. The proof is similar to that of Theorem \ref{generic}.

\begin{thm}\label{generic2} Let $\dot{\mathcal{F}}$ be the generic family on $\omega_1$ added by forcing $\mathbb{P}$ and let $\mathcal{H}$ be a hereditary family on $\omega_1$ from the ground model. The space $X_\mathcal{H}$ cannot be embedded into $X_{\dot{\mathcal{F}}}$.
\end{thm}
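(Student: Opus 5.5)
The plan is to mirror the proof of Theorem \ref{generic} with the roles reversed. Suppose $\dot T$ is a $\mathbb{P}$-name with $p\Vdash$ ``$\dot T\colon X_\mathcal{H}\to X_{\dot{\mathcal F}}$ is an isomorphic embedding with $\frac1K\|x\|<\|\dot T x\|<K\|x\|$''. Put $\dot y_\alpha=\dot T(e_\alpha)$ for $\alpha<\omega_1$, so that $1/K<\|\dot y_\alpha\|_{\dot{\mathcal F}}<K$. The first goal is to replace the names $\dot y_\alpha$ by ground-model finitely supported vectors that approximate them well. For each $\alpha$ I choose $p_\alpha\le p$, a finite set $a_\alpha\subseteq\omega_1$, and a rational vector $z_\alpha\in c_{00}(a_\alpha)$ such that $p_\alpha\Vdash\|\dot y_\alpha-z_\alpha\|_{\dot{\mathcal F}}<\varepsilon$, with $\varepsilon$ small compared to $1/K^3$. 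We may also arrange $a_\alpha\subseteq\supp(p_\alpha)$ and require that $p_\alpha$ decides $\dot{\mathcal F}|a_\alpha$, i.e. every subset of $a_\alpha$ is either in $F_{p_\alpha}$ or has a $2$-subset in $G_{p_\alpha}$. Then $1/K-\varepsilon\le\|z_\alpha\|_{F_{p_\alpha}}\le K+\varepsilon$.

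Next apply the $\Delta$-system lemma to the $\supp(p_\alpha)$. We thin out so that the $p_\alpha$ agree on the root $\Delta$ and are pairwise isomorphic, including the positions of $a_\alpha$ and the values of $z_\alpha$. The vectors $z_\alpha$ have finite supports, so their restrictions to $\Delta$ are uniformly bounded, and we can thin further so that $z_\alpha|\Delta$ is a fixed vector $z_\Delta$. The key reduction is to show that the $\Delta$-part is negligible. If $z_\Delta\neq0$, then choose a free $S$ of size $n$ and amalgamate $q=\bigcup_{\alpha\in S}p_\alpha$. Then $\|\sum_{\alpha\in S}\pm e_\alpha\|_\mathcal{H}\le n$ while $\sum_{\alpha\in S}z_\alpha$ has value $n z_\Delta$ on $\Delta$, so the norm grows linearly. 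This seems hard to contradict directly, so I would instead use unconditionality and average over signs. We have $\|\sum_{\alpha\in S}\epsilon_\alpha z_\alpha\|\ge \|\sum\epsilon_\alpha (z_\alpha-z_\Delta)\|-|\sum\epsilon_\alpha|\,\|z_\Delta\|$, and we can choose balanced signs. This gives vectors $w_\alpha=z_\alpha-z_\Delta$ with pairwise disjoint supports $a_\alpha\setminus\Delta$ outside the root, whose norms are still at least roughly $1/K$ when $z_\Delta$ is small. Handling this root contribution is the step I expect to be the main obstacle. The intended fix is to note that the $\dot y_\alpha$ form a basic sequence equivalent to $(e_\alpha)$, and that $\|\dot y_\alpha\|$ is bounded below. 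Passing to differences $e_\alpha-e_\beta$ of pairs with equal root data reduces to the case $z_\Delta=0$ at the cost of a factor $2$.

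Once the supports are disjoint and free, fix a large $n$ and a free $S\subseteq\Gamma$ of size $n$ with $q=\bigcup_{\alpha\in S}p_\alpha$. The ground-model quantity $\|\sum_{\alpha\in S}e_\alpha\|_\mathcal{H}$ lies in $[1,n]$, and we split as in Theorem \ref{generic}. In \textbf{Case 1}, the norm is at most $K^2$, that is, it is small. Extend $q$ by adding to $F$ all sets of the form $\bigcup_{\alpha\in S}b_\alpha$, where the $b_\alpha\in F_{p_\alpha}$ witness $\|w_\alpha\|_{F_{p_\alpha}}\ge 1/(2K)$. This is legal because freeness guarantees that no pair between different $a_\alpha$'s is in $G_q$. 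The resulting condition forces $\|\sum_{\alpha\in S}\dot y_\alpha\|\gtrsim n/(2K)-n\varepsilon$, which contradicts the upper bound $K\cdot K^2$ for $n$ large. In \textbf{Case 2}, the norm is greater than $K^2$, that is, it is large. Extend $q$ by adding to $G$ all pairs $\{\xi,\eta\}$ with $\xi\in a_\alpha\setminus\Delta$, $\eta\in a_\beta\setminus\Delta$, and $\alpha\neq\beta$. Then every set in $\dot{\mathcal F}$ meets at most one $a_\alpha\setminus\Delta$, so the condition forces $\|\sum_{\alpha\in S} w_\alpha\|\le K+\varepsilon$. Hence $\|\sum\dot y_\alpha\|\le K+1+n\varepsilon$, which contradicts the lower bound $K^2/K$ once $\varepsilon<1/n$. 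To make this work, $\varepsilon$ has to be chosen after $n$. I would fix $n$ depending only on $K$ first, say $n=4K^4$, then $\varepsilon=1/(100n)$, and only then the $p_\alpha$.
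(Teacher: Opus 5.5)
Your overall strategy is sound, and it is a more careful version of the paper's argument. The paper mirrors Theorem~\ref{generic} literally: it adds $\mathcal{P}(S)$ to $F_q$ (Case~1) or $[S]^2$ to $G_q$ (Case~2) for a free set $S$ of \emph{indices}. That changes $\|\sum_{\alpha\in S}\dot x_\alpha\|_{\dot{\mathcal F}}$ in the intended way only if $\dot x_\alpha$ lives at the coordinate $\alpha$. Your steps are what actually make the mirror argument work:
\begin{enumerate}
\item replacing $\dot y_\alpha$ by decided, finitely supported rational approximants $z_\alpha$;
\item taking the $\Delta$-system of $\supp(p_\alpha)\supseteq a_\alpha$;
\item working with the blocks $a_\alpha\setminus\Delta$, adding a norming set $\bigcup_\alpha b_\alpha$ to $F$, or forbidding cross pairs via $G$.
\end{enumerate}
Two small corrections. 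First, both extensions are legal because of the $\Delta$-system property, not freeness of the index set, which plays no role here. No element of $F_q\cup G_q$ meets two distinct blocks, and an element of $G_{p_\gamma}$ inside $b_\gamma\in F_{p_\gamma}$ has size at most $1$ by heredity. Second, deciding $\dot{\mathcal F}|a_\alpha$ may require putting into $G$ a set of size at least $2$ that is not a pair, e.g.\ when all pairs of some $b\subseteq a_\alpha$ already lie in $F_{p_\alpha}$.

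The step that fails as written is Case~2. Splitting at $\|\sum_{\alpha\in S}e_\alpha\|_{\mathcal H}>K^2$ only gives $\|\dot T(\sum_{\alpha\in S}e_\alpha)\|>K$. Your upper bound is $K+\varepsilon+n\varepsilon$ (you even write $K+1+n\varepsilon$), so there is no contradiction. The paper's threshold $K^2$ works only because it uses the bound $\|\dot x_\alpha\|<K$ exactly. Once you pass to approximants, the errors must be absorbed by moving the threshold. The same applies to the factor $2$ if you use pair differences, where the new vectors can have norm close to $2K$.

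Here is a clean fix that also removes the root without re-deciding or re-thinning. Let $c=\|w_\alpha\|_{F_{p_\alpha}}$; by the isomorphism of types it does not depend on $\alpha$. Since $z_\alpha-z_\beta=w_\alpha-w_\beta$ and $p_\alpha\cup p_\beta\Vdash\|\dot y_\alpha-\dot y_\beta\|>1/K$, you get $1/(2K)-\varepsilon<c<K+\varepsilon$.
\begin{enumerate}
\item In Case~1 the root is harmless, because $b_\alpha\subseteq a_\alpha\setminus\Delta$.
\item In Case~2, take $n$ even and a balanced choice of signs $\epsilon_\alpha$. Then $\sum_{\alpha\in S}\epsilon_\alpha z_\alpha=\sum_{\alpha\in S}\epsilon_\alpha w_\alpha$, and by $1$-unconditionality $\|\sum_{\alpha\in S}\epsilon_\alpha e_\alpha\|_{\mathcal H}=\|\sum_{\alpha\in S}e_\alpha\|_{\mathcal H}$.
\end{enumerate}
Now split at $2K^2$ and take $n=8K^4$, $\varepsilon=1/(100n)$, chosen in that order before the $p_\alpha$.
\begin{enumerate}
\item Case~1 yields $\|\dot T(\sum_{\alpha\in S}e_\alpha)\|>n/(2K)-2n\varepsilon>2K^3$, against the bound $<2K^3$.
\item Case~2 yields $\|\dot T(\sum_{\alpha\in S}\epsilon_\alpha e_\alpha)\|<K+(n+1)\varepsilon<2K$, against the bound $>2K$.
\end{enumerate}
With this adjustment your proof goes through.
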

\begin{proof}
	We start as in the proof of Theorem \ref{generic2}. We choose $p$ and, without loss of generality, we assume that there is $K$ such that  $p$ forces that for each $x$, $\|x\|_\mathcal{H}=1$, we have $1/K < \dot{T}(x) < K$. This time we choose
	$p_\alpha$, for each $\alpha$, so that $p_\alpha \Vdash 1/K < \| \dot{x}_\alpha\|_{\dot{\mathcal{F}}} < K$. Also, this time let $S\subseteq \Gamma$ be a free set of size $K^4$.

\textbf{Case 1.} $\| \sum_{\alpha \in S} e_\alpha \|_\mathcal{H} \leq K^2$.
Let $r = (F_q \cup \mathcal{P}(S), G_q)$. Then
\[ r\Vdash \| \sum_{\alpha \in S} \dot{x}_\alpha \|_{\dot{\mathcal{F}}} > K^4/K = K^3 \mbox{ and } \| \sum_{\alpha \in S} x_\alpha \|_\mathcal{H} \leq K^2, \]
and so $r\leq p$ forces that $K$ is not appropriate isomorphism constant, a contradiction.
\medskip

\textbf{Case 2.} $\| \sum_{\alpha \in S} e_\alpha \|_\mathcal{H} > K^2$.
Then let $r = (F_q, G_q \cup [S]^2)$. We have 
\[ r\Vdash \| \sum_{\alpha \in S} \dot{x}_\alpha \|_{\dot{\mathcal{F}}} < K  \mbox{ and } \| \sum_{\alpha \in S} e_\alpha \|_\mathcal{H} > K^2, \]
and so $r$ forces that $K$ is not a proper constant.
\end{proof}

The forcing $\mathbb{P}$ is equivalent to the forcing adding $\omega_1$ Cohen reals. It can be proved using Koppelberg's characterization theorem, \cite{Koppelberg}, in a simmilar manner e.g. to \cite[Proposition
31]{towers}.

\section{Trees and geometry of balls}\label{sec:Suslin}

In this section we present examples of combinatorial Banach spaces induced by trees. Recall the following definitions:

\begin{df}
Let $r>0$. A subset $A$ of a Banach space $X$ is
\begin{itemize}
\item \emph{$r$-separated} if $\|x-y\|\geq r$ for every distinct $x,y\in A$.
\item \emph{$r$-equilateral} if $\|x-y\|=r$ for every distinct $x,y\in A$.
\item \emph{equilateral} if there is $s>0$ such that $A$ is $s$-equilateral.
\end{itemize}
\end{df}

Clearly every $r$-equilateral set is $r$-separated. The unit ball of every infinite-dimensional Banach space contains an infinite $(1+\varepsilon)$-separated set for some $\varepsilon>0$ (\cite{EltonOdell}), and one may pick one $\varepsilon$ working
for every non-reflexive space (\cite{KryczkaPrus}). There are examples of nonseparable Banach spaces which do not admit uncountable $(1+\varepsilon$)-separated sets (e.g. $c_0(\omega_1)$), but they cannot be reflexive (\cite{HajekKaniaRusso}).

As for equilateral sets, Koszmider and Wark (\cite{KoszmiderWark}) constructed a renorming of $\ell_1(\mathbb R)$ without infinite equilateral sets and Koszmider and Ryduchowski (\cite{KamilKoszmider}) constructed a consistent renorming of $\ell_2(\omega_1)$ without uncountable equilateral sets.

For a broad overview of the topic, see \cite{KoszmiderGeometry}.

If a Banach space $X$ contains an equilateral set of an infinite size $\kappa$, it contains a $1$-equilateral set of size $\kappa$ in its unit sphere, as shown by scaling the set to be $1$-equilateral, moving one of its points to $0$ and forgetting about it.

In this section we work primarily with $X_\F$ where $\F$ is the family of finite chains in a $\omega_1$-tree $T$ on $\omega_1$ (by $\kappa$-tree we understand a tree of height $\kappa$). We show that that the unit ball of such a space $X_\F$ contains an uncountable $(1+\varepsilon)$-separated set if and only if the tree $T$ is not Aronszajn and it admits an uncountable equilateral set if and only if $T$ is not Suslin. 

To this end, we shall first prove a proposition which states roughly that if $T$ is an Aronszajn tree and $(F_\alpha)_{\alpha<\omega_1}$ is a family of pairwise disjoint finite antichains in $T$, then there are $\alpha<\beta<\omega_1$ such that $F_\alpha\cup F_\beta$ is an antichain.

The notation we use is standard: for $s\in T$ by $ht(s)$ we denote the height of $s$ in $T$, i.e. the order type of $\{t\in T:\ t<_T s\}$. $T_\alpha = \{t\in T:\ ht(t)=\alpha\}$ and $T_{\geq \alpha} = \{t\in T:\ ht(t)\geq \alpha\}$. For $\alpha<\omega_1$ by $p_\alpha\colon T_{\geq \alpha} \to T_\alpha$ we denote the natural projection, i.e. for $t\in T_{\geq \alpha}$ $p_\alpha(t)$ is the only element of $T_\alpha$ satisfying $p_\alpha(t)\leq t$. Below we list some properties of $p_\alpha$:

\begin{lem}
Let $\alpha\leq \beta < \omega_1$. Then 
\begin{enumerate}[(1)]
\item $p_\alpha \circ p_\beta = p_\alpha|T_{\geq \beta}$.
\item if $s,t\in T_{\geq \alpha}$ and $p_\alpha(s)\neq p_\alpha(t)$, then $s$ and $t$ are incomparable.
\end{enumerate}
\end{lem}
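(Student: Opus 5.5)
Both items follow directly from the definition of $p_\alpha$ together with the fact that in a tree the set of predecessors of any node is well-ordered by $<_T$. For $t\in T$, write $\hat t = \{s\in T\colon s\leq_T t\}$. This set is a well-ordered chain whose order type is $ht(t)+1$. Consequently, for each $\gamma\leq ht(t)$ it contains exactly one element of height $\gamma$. That element is $p_\gamma(t)$, so $p_\gamma$ is well defined on $T_{\geq\gamma}$.

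For (1), fix $t\in T_{\geq\beta}$ and put $u=p_\beta(t)$. Then $u\leq_T t$ and $ht(u)=\beta\geq\alpha$, so $p_\alpha(u)$ is defined. The element $p_\alpha(u)$ satisfies $p_\alpha(u)\leq_T u\leq_T t$, hence it lies in $\hat t$, and it has height $\alpha$. By the uniqueness of the element of height $\alpha$ in $\hat t$, we get $p_\alpha(u)=p_\alpha(t)$. That is, $p_\alpha\circ p_\beta(t)=p_\alpha(t)$ for every $t\in T_{\geq\beta}$, which is exactly $p_\alpha\circ p_\beta=p_\alpha|T_{\geq\beta}$.

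For (2), I argue by contraposition. Suppose $s,t\in T_{\geq\alpha}$ are comparable, say $s\leq_T t$. Then $\hat s\subseteq\hat t$, and $p_\alpha(s)$ is an element of $\hat s$ of height $\alpha$. It therefore lies in $\hat t$ and has height $\alpha$, so by uniqueness $p_\alpha(s)=p_\alpha(t)$. The case $t\leq_T s$ is symmetric. Hence $p_\alpha(s)\neq p_\alpha(t)$ forces $s$ and $t$ to be incomparable.

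The only point needing any care is the uniqueness of the element of a given height in $\hat t$, which is where the tree axiom is used. The axiom says $\hat t$ is well-ordered; any two elements of it are therefore comparable, and by definition of height the strictly smaller one has strictly smaller height. So no two distinct elements of $\hat t$ share a height. Once this is in place, both items are one-line consequences.
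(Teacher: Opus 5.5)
Your argument is correct. Both items reduce, as you show, to the fact that $\{s\colon s\leq_T t\}$ contains exactly one node of each height $\gamma\leq ht(t)$, which is also what makes $p_\alpha$ well defined. The paper states this lemma without proof, treating it as standard, and your verification is the routine argument it leaves implicit.
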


It is known that every $\omega_1$-tree with all levels finite contains a cofinal branch. Here we reformulate a direct consequence of this fact: 
\begin{lem}\label{fin-levels-lemma}
    Let $T$ be an $\omega_1$-tree and let $S$ be its uncountable subset. If $p_\alpha[S_{\geq \alpha}]$ is finite for cofinally many $\alpha$'s then $T$ contains a cofinal branch.
\end{lem}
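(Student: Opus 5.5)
We plan to deduce Lemma \ref{fin-levels-lemma} from the quoted fact that every $\omega_1$-tree with all levels finite has a cofinal branch. The idea is to build such a tree out of the sets $S$ and $p_\alpha[S_{\geq\alpha}]$, and then carry a cofinal branch of it back into $T$.

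Let $C\subseteq\omega_1$ be the cofinal set of $\alpha$ with $p_\alpha[S_{\geq\alpha}]$ finite. Put
\[ D=\{t\in T\colon t\leq s \mbox{ for some } s\in S\}, \]
the downward closure of $S$ in $T$. Then $D$ is a subtree of $T$. It has height $\omega_1$: since $S$ is uncountable and every level of an $\omega_1$-tree is countable, $S$ meets uncountably many levels, so $S_{\geq\alpha}\neq\emptyset$ for every $\alpha$. For every $\alpha$ we have $D\cap T_\alpha = p_\alpha[S_{\geq\alpha}]$: any $t\in D$ of height $\alpha$ lies below some $s\in S$ of height at least $\alpha$, and then $t=p_\alpha(s)$. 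Hence the levels of $D$ indexed by $\alpha\in C$ are finite.

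The step needing care is that levels of $D$ indexed outside $C$ may be infinite, so $D$ itself need not have all levels finite. We fix this by restricting to $D'=\bigcup_{\alpha\in C} (D\cap T_\alpha)$ with the induced order. Let $\langle \gamma_\xi\colon \xi<\omega_1\rangle$ be the increasing enumeration of $C$. Then $D'$ is a tree whose $\xi$-th level is contained in $D\cap T_{\gamma_\xi}$, hence finite. The height of $t\in D\cap T_{\gamma_\xi}$ inside $D'$ is exactly $\xi$, because every predecessor $p_{\gamma_\eta}(t)$ with $\eta<\xi$ lies in $D'$. It is also nonempty at every level, since $S_{\geq\gamma_\xi}\neq\emptyset$.

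So $D'$ is an $\omega_1$-tree with finite levels, and the quoted fact gives a cofinal branch $b\subseteq D'$, picking one node $b_\xi$ on each level $\xi$. Its downward closure $\{t\in T\colon t\leq b_\xi \mbox{ for some } \xi\}$ in $T$ is a chain, since any two of its elements lie below the comparable nodes $b_\xi,b_\eta$, and a subset of the predecessors of a node in a tree is linearly ordered. It also meets every level $T_\alpha$, because $C$ is cofinal, so this is a cofinal branch of $T$.
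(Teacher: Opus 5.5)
Your proof is correct, and it is the deduction the paper intends: the paper gives no written proof, only calling the lemma a direct consequence of the fact that $\omega_1$-trees with finite levels have cofinal branches, and your restriction of the downward closure of $S$ to the levels in $C$ supplies exactly the omitted details. Note that you genuinely need countability of the levels (to get $S_{\geq\alpha}\neq\emptyset$ for every $\alpha$), which the paper's stated convention that a $\kappa$-tree is just a tree of height $\kappa$ does not include. Without it the lemma fails when $S$ lies in boundedly many levels, since then $p_\alpha[S_{\geq\alpha}]=\emptyset$ for all large $\alpha$.
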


\begin{df}
	Let $T$ be a tree. A pairwise disjoint family $\F\subseteq [T]^{<\omega}$ is \emph{narrow} if for every $A,B\in \F$ there are comparable $a\in A$, $b\in B$. If, additionally, $|A|\leq n$ for each $A\in \F$, then we say that $\F$ is \emph{$(n$-$)$narrow}. 
\end{df}

Our goal is to show that in an Aronszajn tree $T$ there are no uncountable narrow families of antichains. In fact, it is equivalent to proving that there are no uncountable narrow families at all:

\begin{lem}\label{n-proper}
Let $T$ be an $\omega_1$-tree such that there exist an uncountable narrow family $\{F_\alpha:\ \alpha<\omega_1\}\subseteq [T]^{\leq n}$. Then there exists an uncountable $n$-narrow family $\{G_\alpha:\ \alpha<\omega_1\}$ with $G_\alpha\subseteq
T_\alpha$. 
\end{lem}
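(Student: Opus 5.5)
The plan is to push every set of the given family up to a single level of the tree with the projections $p_\gamma$, after first thinning the family so that this step neither destroys narrowness nor merges elements inside a set. Start from the uncountable narrow family $\{F_\alpha:\ \alpha<\omega_1\}\subseteq [T]^{\leq n}$. Each $F_\alpha$ is finite and the family is pairwise disjoint, and each level $T_\xi$ is countable. Hence for every countable ordinal $\xi$ only countably many $F_\alpha$ meet $T_{\leq\xi}$. By a recursive choice we can therefore pass to an uncountable subfamily, re-indexed as $(F_\alpha)_{\alpha<\omega_1}$, satisfying
\[ \min\{ht(t):\ t\in F_\alpha\} > \sup\{ht(t):\ t\in F_\beta,\ \beta<\alpha\}. \]
A subfamily of a narrow family is still narrow, so this thinning costs nothing. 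Next, for each $\alpha$ let $\delta_\alpha$ be an ordinal that bounds the heights of the elements of $F_\beta$ for all $\beta<\alpha$ and lies below the minimal height in $F_\alpha$. Choose it so that $\alpha\mapsto\delta_\alpha$ is strictly increasing, and such that the projections $p_{\delta_\alpha}$ restricted to $F_\alpha$ are injective. Injectivity is available because $F_\alpha$ is finite: below the heights of $F_\alpha$, any two distinct elements of $F_\alpha$ either stay distinct at all large enough levels or are comparable, and the comparable case needs separate care (see the last paragraph).

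The key step is the projection itself. Put $H_\alpha = p_{\delta_\alpha}[F_\alpha]\subseteq T_{\delta_\alpha}$, so $|H_\alpha|\leq n$. Narrowness is inherited. For $\beta<\alpha$ there are comparable $a\in F_\beta$ and $b\in F_\alpha$. Since $ht(a)<\delta_\alpha\leq ht(b)$, we must have $a<b$, and then $a$ is comparable with $p_{\delta_\alpha}(b)$. So we want to project $F_\beta$ to level $\delta_\beta$ while keeping this comparability, which requires $ht(a)\geq \delta_\beta$. That holds because $\delta_\beta$ lies below the heights in $F_\beta$. It then follows that $p_{\delta_\beta}(a)\leq a< b$, so $p_{\delta_\beta}(a)\leq p_{\delta_\alpha}(b)$ because $\delta_\beta<\delta_\alpha$. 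Hence $H_\beta$ and $H_\alpha$ contain comparable elements. The sets $H_\alpha$ lie on distinct levels $\delta_\alpha$, so they are pairwise disjoint and form an uncountable $n$-narrow family.

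Finally we re-index so that the sets actually sit on level $T_\alpha$. Take a fixed $\alpha$ and define $G_\alpha = p_\alpha[H_{\xi(\alpha)}]$, where $\xi(\alpha)$ is the least index with $\delta_{\xi}\geq\alpha$. This lands in $T_\alpha$, but $\xi$ is not injective, and pairs with $\xi(\alpha)=\xi(\alpha')$ are still fine (they are comparable via the same element). Comparability is preserved under projection: if $u\leq v$ with $ht(u)\le ht(v)$, then $p_\gamma(u)$ and $p_\gamma(v)$ are equal or comparable for $\gamma\leq ht(u)$. If that is too weak at levels between the $\delta$'s, replace $\omega_1$-indexing by the club of $\delta_\alpha$'s, which is harmless for the application.

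The main obstacle I expect is keeping $|G_\alpha|$ and disjointness under control when elements of some $F_\alpha$ are comparable to each other or collapse under projection. Collapsing only decreases size, so $\leq n$ is automatic. Disjointness across indices comes for free from distinct levels, so the real care is in the choice of $\delta_\alpha$ above the branching heights of $F_\alpha$.
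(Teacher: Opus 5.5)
Your argument is correct in substance and follows the paper's approach. You thin the family so that $\max ht[F_\beta]<\min ht[F_\alpha]$ for $\beta<\alpha$, project each set onto a single level, and get narrowness because, when $a<b$, the projections of $a$ and of $b$ both lie in the chain below $b$. The paper needs neither the auxiliary levels $\delta_\alpha$ nor the re-indexing via $\xi(\alpha)$. Once $\alpha\mapsto\min ht[F_\alpha]$ is strictly increasing, one automatically has $\min ht[F_\alpha]\geq\alpha$ (and likewise $\delta_\alpha\geq\alpha$ for your $\delta$'s). So $G_\alpha=p_\alpha[F_\alpha]\subseteq T_\alpha$ works directly. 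Your key-step computation, run at level $\alpha$ instead of $\delta_\alpha$ and using $ht(a)<ht(b)$ from the thinning to get $a<b$, is precisely the paper's verification.

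Two points in your write-up need correcting.

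\textbf{Injectivity.} The requirement that $p_{\delta_\alpha}$ be injective on $F_\alpha$ cannot be met in general. Two comparable nodes of $F_\alpha$, or two nodes that split only above $\min ht[F_\alpha]$, have the same projection at every admissible level $\delta_\alpha\leq\min ht[F_\alpha]$. You never actually use injectivity, since $|G_\alpha|\leq n$ holds regardless, so simply drop it.

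\textbf{Re-indexing.} You only check pairs with $\xi(\alpha)=\xi(\alpha')$, and the fact you quote concerns projections to a common level. For $\xi(\alpha)\neq\xi(\alpha')$, say $\xi(\alpha)<\xi(\alpha')$, take comparable $u\in H_{\xi(\alpha)}$ and $v\in H_{\xi(\alpha')}$. Then $u<v$ because $\delta_{\xi(\alpha)}<\delta_{\xi(\alpha')}$. Now $p_\alpha(u)\leq u<v$ and $p_{\alpha'}(v)\leq v$ both lie in the chain below $v$, hence are comparable. With this one-line addition your $G_\alpha$ do form the required family. The fallback of passing to ``the club of $\delta_\alpha$'s'' is then unnecessary. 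It would not prove the lemma as stated anyway, and $\{\delta_\alpha\colon \alpha<\omega_1\}$ need not be closed.
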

\begin{proof}
By passing to an uncountable subfamily we may assume that for every $\alpha<\beta<\omega_1$ we have $\max ht[F_\alpha] < \min ht[F_\beta]$ and $\min ht[F_0]>0$; in particular $\min ht[F_\alpha]\geq \alpha$ for every $\alpha<\omega_1$. Let $G_\alpha = p_\alpha[F_\alpha]$ for $\alpha<\omega_1$. Clearly $G_\alpha \subseteq T_\alpha$ and $|G_\alpha|\leq |F_\alpha|\leq n$ for $\alpha<\omega_1$, so it is enough to check that $\{G_\alpha:\ \alpha<\omega_1\}$ is narrow. To this end fix $\alpha<\beta<\omega_1$ and pick $a\in F_\alpha$, $b\in F_\beta$ such that $a<b$. Then $a$ and $p_\beta(b)$ are comparable, and therefore $p_\alpha(a)<p_\beta(b)$, which concludes the proof.
\end{proof}

We say that a family $\F$ has the \emph{intersection property} if for every $A,B\in \F$ the intersection $A\cap B$ is nonempty (such families are also often called \emph{linked families}). 

The following lemma lets us reinterpret the narrowness of a family.
\begin{lem}\label{eqivalent-refolmulation}
Let $T$ be an $\omega_1$-tree and let $\F\subseteq [T]^{<\omega}$ be an uncountable family with $\min(ht[A]) = \max(ht[A])$ for every $A\in \F$. Then the following are equivalent:\begin{enumerate}
        \item $\F$ is narrow,
        \item for every $\alpha<\omega_1$ the family $\{p_\alpha[A]: A\in \F, \min(ht[A]) \geq \alpha \}$ has the intersection property.
    \end{enumerate}
\end{lem}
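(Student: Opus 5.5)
The plan is to prove both implications directly from the two properties of the projections $p_\alpha$ in the preceding lemma. Throughout I write $h_A$ for the common value $\min(ht[A]) = \max(ht[A])$, so that $A\subseteq T_{h_A}$ for every $A\in\F$. Since $A\subseteq T_{h_A}$, each $A$ is an antichain. The one fact used repeatedly is the following: if $a\in A$ and $b\in B$ with $h_A\le h_B$, then $a$ and $b$ are comparable if and only if $a = p_{h_A}(b)$. The reason is that comparability forces $a\le b$, as $a$ lies on the lower level, and the unique element of $T_{h_A}$ below $b$ is $p_{h_A}(b)$.

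For (1)$\implies$(2), fix $\alpha$ and $A,B\in\F$ with $h_A,h_B\ge\alpha$. If $A=B$, then $p_\alpha[A]$ meets itself as long as $A\neq\emptyset$. I will assume members of $\F$ are nonempty; this is implicit, since a narrow family cannot contain $\emptyset$. If $A\neq B$, narrowness gives comparable $a\in A$ and $b\in B$. Say $h_A\le h_B$; then $a=p_{h_A}(b)$. Applying part (1) of the lemma, $p_\alpha(a)=p_\alpha(p_{h_A}(b))=p_\alpha(b)$, so $p_\alpha[A]\cap p_\alpha[B]\neq\emptyset$.

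For (2)$\implies$(1), fix distinct $A,B\in\F$ with $h_A\le h_B$ and take $\alpha=h_A$. Both sets qualify for the family in (2). Since $A\subseteq T_\alpha$, we have $p_\alpha[A]=A$, so (2) gives some $a\in A\cap p_\alpha[B]$. That is, $a=p_\alpha(b)\le b$ for some $b\in B$, and so $a$ and $b$ are comparable, as narrowness requires.

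It remains to check disjointness, which is part of the definition of narrow. The issue arises when $h_A=h_B$: then (2) at level $\alpha=h_A$ only says $A\cap B\neq\emptyset$, which contradicts disjointness. So the equivalence has to be read with $\F$ assumed pairwise disjoint, which is the intended setting coming from Lemma \ref{n-proper}. Under that standing assumption, $A\cap B\neq\emptyset$ combined with the level condition still yields the required comparable pair $a=b$ when the levels are equal. For the direction (1)$\implies$(2), disjointness is simply given. This bookkeeping about disjointness and the equal-level case is the only delicate point; the rest is routine manipulation of the $p_\alpha$.
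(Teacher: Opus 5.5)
Your proof is correct and follows essentially the same argument as the paper. For (1)$\implies$(2) you push a comparable pair down to level $\alpha$, and for (2)$\implies$(1) you take $\alpha$ to be the lower of the two levels, so that $p_\alpha[A]=A$. Your caveat about disjointness is well taken, since the paper silently sidesteps the equal-level case with ``we may assume $\alpha<\beta$''. Under the standing disjointness assumption, though, that case is simply vacuous: (2) would force two distinct same-level members to intersect. So there is no need to produce the pair $a=b$.
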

\begin{proof}
	Assume that $\F$ is narrow. Pick $\alpha<\omega_1$ and $A, B\in \F$ such that $ht[A], ht[B]\geq \alpha$. There are $a\in A, b\in B$, which are comparable, and so $p_\alpha(a) = p_\alpha(b)$. Thus, $p_\alpha[A] \cap p_\alpha[B]\neq \emptyset$. 
	Now assume that for every $\alpha<\omega_1$ the family $\{p_\alpha[A]: A\in \F, \min(ht[A]) \geq \alpha \}$ has the intersection property. Pick $A, B\in \F$. We may assume that $A\subs T_\alpha, B\subs T_\beta$ and $\alpha < \beta$. Then there exists $a \in p_\alpha[A] \cap p_\alpha[B] = A \cap p_\alpha[B]$. So there exists $b \in B$ so that $a = p_\alpha(b)$, but this means that $a < b$.
\end{proof}

For a family $\mathcal{B} \subseteq \mathcal{P}(X)$, say that $A\subseteq X$ is a \emph{transversal} if $A\cap B \ne \emptyset$ for each $B\in \mathcal{B}$. Note that a family $\mathcal{B}$ has the intersection property if and only if each element of
$\mathcal{B}$ is a transversal of $\mathcal{B}$.

\begin{lem} \label{transversal-lemma}
	Let $\mathcal{A}$ be a family of finite sets and let $\mathcal{B}$ be a family consisting of finite transversals of $\mathcal{A}$ with size at most $k$. Then either $\mathcal{B}$ is finite or $\mathcal{A}$ has a transversal of size strictly less than $k$.
\end{lem}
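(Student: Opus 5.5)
The plan is to prove the contrapositive by the classical sunflower-type argument (as in the Erdős--Rado sunflower lemma): assume $\mathcal{A}$ has no transversal of size $<k$, and show that $\mathcal{B}$ is finite. The natural induction is on $k$, but stated in a slightly more general form that survives the inductive step. For a finite set $C$, consider the subfamily $\mathcal{B}_C = \{B\in\mathcal{B}\colon C\subseteq B\}$. The claim to prove by downward induction on $|C|$ is: if $C\subseteq X$ with $|C|\le k$ is not itself a transversal of $\mathcal{A}$ (or more precisely, under the standing assumption that no set of size $<k$ is a transversal), then $\mathcal{B}_C$ is finite. Applying it with $C=\emptyset$ gives that $\mathcal{B}$ is finite. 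The case $k=0$ is trivial: a set of size $0$ can only be a transversal if $\mathcal{A}$ is empty, and then $\emptyset$ is a transversal of size $<k$ whenever $k\ge1$. So we may assume $\mathcal{A}\neq\emptyset$ when $k\ge 1$.

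The key step is the following. Fix $C$ with $|C|<k$. By assumption $C$ is not a transversal of $\mathcal{A}$, so there is $A\in\mathcal{A}$ with $A\cap C=\emptyset$. Every $B\in\mathcal{B}_C$ is a transversal, so $B$ meets $A$, and hence $B$ contains some $a\in A\setminus C=A$. Therefore
\[
\mathcal{B}_C \subseteq \bigcup_{a\in A} \mathcal{B}_{C\cup\{a\}},
\]
which is a finite union because $A$ is finite. Each $C\cup\{a\}$ has size $|C|+1$. If $|C|+1<k$ we apply the inductive hypothesis again. If $|C|+1=k$, then every $B\in\mathcal{B}_{C\cup\{a\}}$ has size at most $k$ and contains a set of size $k$, so $B=C\cup\{a\}$ and $\mathcal{B}_{C\cup\{a\}}$ has at most one element. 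If $|C|=k$, then $\mathcal{B}_C\subseteq\{C\}$ in the same way. Thus the recursion, started at $C=\emptyset$, terminates after at most $k$ levels, with finite branching at each level. This bounds $|\mathcal{B}|$ by a product of sizes of members of $\mathcal{A}$; if the members of $\mathcal{A}$ are not uniformly bounded in size, the bound is still finite, because at each level we only choose one fixed $A$.

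The only delicate point, and it is minor, is ensuring that at each stage we use a set $C$ of size strictly less than $k$, so that the hypothesis "no transversal of size $<k$" produces a disjoint $A$. This is exactly the reason for stopping the descent at $|C|=k-1$ and handling $|C|\ge k-1$ by the size bound $|B|\le k$. Formally, I would write it as an induction on $j=k-|C|$ proving that $|\mathcal{B}_C|\le N^{j}$ when $N$ bounds the sizes of the chosen sets, or simply that $\mathcal{B}_C$ is finite. I expect no real obstacle here.
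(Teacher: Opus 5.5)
Your proof is correct, but it takes a different route from the paper. The paper argues directly: assuming $\mathcal{B}$ is infinite, it applies the sunflower lemma to get infinitely many $B_i\in\mathcal{B}$ with $B_i\cap B_j=R$ for $i\neq j$. Then $|R|<k$ because the $B_i$ are distinct, and $R$ is a transversal because a finite $A$ disjoint from $R$ could meet only finitely many of the pairwise disjoint petals $B_i\setminus R$.

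You instead prove the contrapositive with a finitely branching search tree. If no set of size $<k$ is a transversal, each $C$ with $|C|<k$ is missed by some $A_C\in\mathcal{A}$, so $\mathcal{B}_C\subseteq\bigcup_{a\in A_C}\mathcal{B}_{C\cup\{a\}}$. Induction on $k-|C|$, starting from the base case $\mathcal{B}_C\subseteq\{C\}$ for $|C|=k$, shows that every such $\mathcal{B}_C$ is finite, in particular $\mathcal{B}=\mathcal{B}_\emptyset$. What you are really proving is just ``$\mathcal{B}_C$ is finite for every finite $C$ with $|C|\le k$''; the hedged formulation of the claim in your first paragraph can be dropped.

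Your argument needs no sunflower lemma. Read along a branch on which $\mathcal{B}_C$ stays infinite, it also yields a transversal of size $<k$ contained in infinitely many members of $\mathcal{B}$, just as the kernel $R$ does. Its real advantage is that it is quantitative. If all members of $\mathcal{A}$ have at most $n$ elements, the same induction gives $|\mathcal{B}_C|\le n^{k-|C|}$, so $|\mathcal{B}|\le n^k$. Take $\mathcal{A}=\mathcal{B}=\mathcal{F}$ for an intersecting $\mathcal{F}\subseteq[\omega]^n$ with no transversal of size $n-1$ (hence none of size $<n$). Then $|\mathcal{F}|\le n^n$ and $|\bigcup\mathcal{F}|\le n^{n+1}$, which is an explicit upper bound for the quantity $t(n)$ in the paper's later remark. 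The sunflower argument as written is purely qualitative.
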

\begin{proof}
	Assume that $\mathcal{B}$ is infinite and let $\mathcal{B} = \{B_i\colon i\in \omega\}$. By the Sunflower Lemma (\cite{ErdosRado1960}) we may assume that there is $R$ such that $B_i \cap B_j = R$ for $i\neq j$. Now note that $|R| < k$ (as
	$(B_i)_i$ are pairwise different) and $R$ is a transversal: if a set $A \in \mathcal{A}$ does not intersect $R$, then it could intersect at most finitely many sets $B_i$.
\end{proof}

\begin{prop}\label{branch}
    Suppose that $T$ is an $\omega_1$-tree that contains a narrow family. Then $T$ has a cofinal branch.
\end{prop}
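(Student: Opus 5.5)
The plan is to reduce to a statement about finitely many nodes per level and then apply Lemma \ref{fin-levels-lemma}. I read ``narrow family'' as uncountable and $n$-narrow for some $n$, and I would argue by induction on $n$. By Lemma \ref{n-proper} we may assume the family is $\{G_\alpha:\ \alpha<\omega_1\}$ with $G_\alpha\subseteq T_\alpha$ and $|G_\alpha|\leq n$. By Lemma \ref{eqivalent-refolmulation}, for every $\beta<\omega_1$ the family
\[ \mathcal{A}_\beta=\{p_\beta[G_\alpha]:\ \alpha\geq\beta\} \]
consists of subsets of $T_\beta$ of size $\leq n$ and has the intersection property. In particular, every member of $\mathcal{A}_\beta$ is a transversal of $\mathcal{A}_\beta$ of size $\leq n$.

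\textbf{Step 1: cleaning up.} Call $t\in T_\beta$ \emph{large} if $t\in p_\beta[G_\alpha]$ for uncountably many $\alpha$. Since $T_\beta$ is countable, only countably many $\alpha$ have $p_\beta[G_\alpha]$ containing a non-large node. We may discard them, though this discarding depends on $\beta$, so it has to be handled by passing to tails. Write $L_\beta$ for the set of large nodes of $T_\beta$. By property (1) of the projections, $p_\beta$ maps large nodes of $T_\gamma$ to large nodes of $T_\beta$. Hence $L=\bigcup_\beta L_\beta$ is a downward closed uncountable subtree with every level nonempty. Moreover, for each $\beta$, a tail of $\mathcal{A}_\beta$ is an intersecting family of subsets of $L_\beta$, and $L_\beta$ is contained in the union of the members of that tail.

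\textbf{Step 2: dichotomy via Lemma \ref{transversal-lemma}.} Suppose that for cofinally many $\beta$ the tail of $\mathcal{A}_\beta$ has only finitely many distinct members. Then $L_\beta$ is finite for cofinally many $\beta$, and Lemma \ref{fin-levels-lemma} applied to $S=L$ yields a cofinal branch. Otherwise, for all $\beta$ beyond some $\beta_0$, infinitely many distinct members of size $\leq n$ are transversals of the tail. By Lemma \ref{transversal-lemma} the tail then has a transversal $R_\beta\subseteq T_\beta$ with $|R_\beta|<n$. The case $n=1$ is immediate, since there the intersection property forces all $p_\beta[G_\alpha]$ to be one and the same singleton.

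\textbf{Step 3: the main obstacle.} In the second case I want to show that $\{R_\beta\}$, after thinning out, is again an uncountable narrow family of sets of size $\leq n-1$, so that the induction hypothesis applies. For $\beta<\gamma$, both $R_\beta$ and $p_\beta[R_\gamma]$ meet every $p_\beta[G_\alpha]$ for $\alpha$ large, but it does not follow directly that they meet each other. To get narrowness I would choose $R_\beta$ canonically: among the transversals of minimal size $m_\beta$ of the tail, take the union $U_\beta$ of all of them. By the Sunflower argument of Lemma \ref{transversal-lemma}, there are only finitely many such minimal transversals once $m_\beta$ is minimal for the tail, so $U_\beta$ is finite. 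I would then check that $p_\beta[U_\gamma]\subseteq U_\beta$ for $\beta<\gamma$ on a cofinal set where $m_\beta$ has stabilized. This uses that $p_\beta$ of a minimal transversal at level $\gamma$ is a transversal at level $\beta$ of size $\leq m_\gamma$. If this coherence holds, $\bigcup_\beta U_\beta$ is an uncountable subset with finite projections on cofinally many levels, and Lemma \ref{fin-levels-lemma} finishes the proof. Verifying this coherence, in particular the stabilization of $m_\beta$ along a cofinal set and the bookkeeping of tails, is the step I expect to be hardest.
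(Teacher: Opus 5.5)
Your endgame in Step 3 is the right idea, and once completed it is essentially the paper's argument in a canonical form. The induction on $n$, the subtree $L$ of large nodes and the dichotomy of Step 2 can be dropped entirely.

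The one thing that does not work as you set it up is the use of the fixed tails from Step 1. If $\xi_\gamma$ is the threshold chosen at level $\gamma$, then $p_\beta$ of a transversal of the $\gamma$-tail is only guaranteed to meet $p_\beta[G_\alpha]$ for $\alpha>\xi_\gamma$. So it need not be a transversal of the $\beta$-tail, and $p_\beta[U_\gamma]\subseteq U_\beta$ can fail. The fix is to make the notion threshold-free, as follows.

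Call $R\subseteq T_\beta$ an \emph{eventual transversal} at level $\beta$ if $R\cap p_\beta[G_\alpha]\neq\emptyset$ for all sufficiently large $\alpha$; this is precisely condition (3) in the paper. Let $m_\beta$ be the least size of such an $R$, so $1\leq m_\beta\leq n$, since $p_\beta[G_\alpha]$ itself is one. Let $\mathcal{E}_\beta$ be the set of eventual transversals of size $m_\beta$.

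Your projection remark then gives $m_\beta\leq m_\gamma$ for $\beta<\gamma$. Hence $m_\beta$ is constant from some $\beta_1$ on, and for $\beta_1\leq\beta<\gamma$ the map $R\mapsto p_\beta[R]$ sends $\mathcal{E}_\gamma$ into $\mathcal{E}_\beta$.

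Finiteness of $\mathcal{E}_\beta$ needs one extra remark beyond the Sunflower Lemma. Any countably many eventual transversals have a common threshold $\xi<\omega_1$. So if $\mathcal{E}_\beta$ were infinite, Lemma \ref{transversal-lemma} applied to $\{p_\beta[G_\alpha]:\ \alpha>\xi\}$ would give an eventual transversal of size $<m_\beta$.

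With $U_\beta=\bigcup\mathcal{E}_\beta$, the set $U=\bigcup_{\beta\geq\beta_1}U_\beta$ is uncountable. Moreover $p_\beta[U_{\geq\beta}]\subseteq U_\beta$ is finite for every $\beta\geq\beta_1$, so Lemma \ref{fin-levels-lemma} finishes the proof.

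For comparison, the paper takes the least $m$ such that unboundedly many levels carry an eventual transversal of size $\leq m$; by the monotonicity above, this is the eventual value of your $m_\beta$. It then fixes one arbitrary witness $S_\alpha$ per level. It shows that $\bigcup_{\eta\in I\setminus\alpha}p_\alpha[S_\eta]$ is finite for all large $\alpha\in I$ by applying Lemma \ref{transversal-lemma} to the projected witnesses, using the same countable-threshold trick to find $\beta'$. Collecting all minimal eventual transversals, as you propose, replaces that step by a finiteness statement about a single level plus automatic coherence under projections. The ingredients are the same: minimality, the transversal lemma and the finite-levels lemma.
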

\begin{proof}
	Assume that $T$ contains a narrow family $\ef = \{F_\alpha\colon \alpha<\omega_1\}$. We may assume that $|F_\alpha| = n$ and for some $n$ and each $\alpha$ and, thanks to Lemma \ref{n-proper}, that $F_\alpha \subs T_\alpha.$ Let $m$ be the least
	natural number such that there exists an unbounded set $I\subs \omega_1$ and a family $\{ S_\alpha: \alpha\in I\}$ such that for each $\alpha \in I$: \begin{enumerate}
	    \item $|S_\alpha| \leq m,$
        \item $S_\alpha \subs T_\alpha$,
        \item $\exists \beta\; \forall \xi > \beta\; p_\alpha [F_\xi] \cap S_\alpha \neq \emptyset.$ (Denote such $\beta$ as $\beta(\alpha).$)
	\end{enumerate}
	It follows from Lemma \ref{eqivalent-refolmulation} that $m \leq n.$ Let $I$ and $\{S_\alpha: \alpha\in I\}$ be as above, and fix $\alpha\in I$. There exists an index $\beta'$ such that for every $\eta \in I \bs \alpha$ and every $\xi > \beta'$
	we have $p_\alpha[S_\eta] \cap p_\alpha[F_\xi]\neq \emptyset.$ Indeed, it is enough to take $\beta'<\omega_1$ bigger than $\min(\beta(\gamma)\colon p_\alpha(F_\gamma) = p_\alpha[S_\eta\})$ for any $\eta\in I\setminus \alpha$ (it is possible as there are
		only countably many possibilites for $p_\alpha[S_\eta]$). By Lemma \ref{transversal-lemma}, if $\bigcup_{\eta\in I\bs \alpha} p_\alpha[S_\eta]$ is infinite, then there exists $S'_\alpha$ with cardinality smaller than $m$ that satisfies (2) and 
		(3). Hence, because of the choice of $m$, there
exists $\gamma$ such that for all $\alpha > \gamma$ we have $|\bigcup_{\eta\in I\bs \alpha} p_\alpha[S_\eta]| < \omega$. By Lemma \ref{fin-levels-lemma} it means that there is a cofinal branch in $T$.
\end{proof}

Piotr Koszmider remarked that the above proposition was proved, by different methods, in \cite[Theorem 8.2]{Baumgartner}.

Now we are ready to proceed to the geometric properties of $X_\F$ stated at the beginning of this section. Let us, however, add a suspense by considering a digression about finite combinatorics.

\begin{rem}It might be an interesting question in finite combinatorics to find, for a given $n$, a maximal size $t(n)$ of $\bigcup \mathcal{F}$ such that $\mathcal{F} \subseteq [\omega]^n$ has the intersection property and does not have transversal
	of size $n-1$ (so, to prove a quantitive version of Lemma \ref{transversal-lemma}).
Notice that if there is a projective plane $P$ of order $n-1$ and $\mathcal{F}$ is the set of all the lines on $P$, then $\mathcal{F}\subseteq [\omega]^n$, it has the intersection property, and it does not have a transversal of size $n-1$. Hence, in
this case, $t(n)$ is at least of the size of $P$ and so $t(n) \geq (n-1)^2 + n$. In fact, we conjecture that $t(n) = (n-1)^2 + n$ for every $n$.
\end{rem}

Now we proceed to the matters of existence of uncountable equilateral and separated subsets of the unit ball of $X_\F$, where $\F$ is the family of finite chains in a tree $T$ on $\omega_1$.

\begin{lem}\label{reduction}
Let $(x_\alpha)_{\alpha<\omega_1}$ be a family of vectors in the unit ball of a Banach space $X_\F$. Then for every $\delta>0$ there is an uncountable $\Gamma\subseteq \omega_1$ and vectors $(z_\alpha)_{\alpha\in \Gamma}$ with finite, pairwise disjoints supports and rational values in the unit ball of $X_\F$ such that for every $\alpha<\beta$ in $\Gamma$

$$\|x_\alpha-x_\beta\| - \delta \leq \|z_\alpha-z_\beta\| \leq \|x_\alpha-x_\beta\|+\delta.$$
\end{lem}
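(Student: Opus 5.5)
The plan is to use three standard reductions: approximation by finitely supported vectors, a $\Delta$-system argument to make the supports disjoint, and a rounding of coefficients to rationals. The approximation is valid in any $X_\F$ because $c_{00}(\omega_1)$ is dense by definition.

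First, for each $\alpha$ I pick $y_\alpha\in c_{00}(\omega_1)$ with rational values and $\|x_\alpha-y_\alpha\|<\delta/8$. I also want $\|y_\alpha\|\leq 1$. To get this, I first approximate $(1-\delta/16)x_\alpha$, which lies in the interior of the ball, and then use that rational finitely supported vectors are dense; this keeps $y_\alpha$ in the unit ball. By the triangle inequality, $\big|\|y_\alpha-y_\beta\|-\|x_\alpha-x_\beta\|\big|<\delta/4$ for all $\alpha,\beta$.

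Next I apply the $\Delta$-system lemma to the finite sets $\supp(y_\alpha)$. This gives an uncountable $\Gamma_0$ and a finite root $R$. Thinning further, I may assume all $|\supp(y_\alpha)|$ are equal. The vectors $P_R(y_\alpha)$ lie in a finite-dimensional space $\mathbb{R}^R$ and are bounded by the unit ball, since $\|P_R y\|\leq\|y\|$ ($\F$ is hereditary, so projections onto coordinate sets are norm-one). By compactness, or more simply because each $P_R(y_\alpha)$ is rational with bounded entries but possibly with unbounded denominators, I use a separability/pigeonhole argument. I choose a finite $\delta/8$-net $N$ of the ball of $\mathbb{R}^R$ in the norm $\|\cdot\|_\F$; all norms on $\mathbb{R}^R$ are equivalent, so such a net exists. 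Then I pass to an uncountable $\Gamma\subseteq\Gamma_0$ on which every $P_R(y_\alpha)$ is within $\delta/8$ of a single $v\in N$. Consequently $\|P_R(y_\alpha)-P_R(y_\beta)\|<\delta/4$ for $\alpha,\beta\in\Gamma$.

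Finally I set $z_\alpha=y_\alpha-P_R(y_\alpha)=P_{\supp(y_\alpha)\setminus R}(y_\alpha)$. These vectors have finite, pairwise disjoint supports and rational values. They lie in the unit ball by monotonicity of projections. Moreover $z_\alpha-z_\beta=(y_\alpha-y_\beta)-(P_Ry_\alpha-P_Ry_\beta)$, so $\big|\|z_\alpha-z_\beta\|-\|y_\alpha-y_\beta\|\big|<\delta/4$. Combining with the first estimate yields the two-sided bound with error $\delta/2<\delta$.

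I expect no real obstacle here. The only points needing care are keeping the approximants inside the unit ball and the uncountable pigeonhole on the root part, which uses finite-dimensionality of $\mathbb{R}^R$ together with the uncountability of $\Gamma_0$.
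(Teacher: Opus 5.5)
Your proof is correct and follows the paper's argument: rational finitely supported approximants, a $\Delta$-system on the supports, then deleting the root. There are two small differences. The paper makes the root parts exactly equal by pigeonholing in the countable set $\mathbb{Q}^R$ (unbounded denominators are irrelevant, since $\mathbb{Q}^R$ is countable), which gives $\|z_\alpha-z_\beta\|=\|y_\alpha-y_\beta\|$ exactly rather than your finite-net approximation. Conversely, your rescaling by $1-\delta/16$ ensures the unit-ball condition on the $z_\alpha$, which the paper's proof never checks.
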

\begin{proof}
	For $\alpha<\omega_1$ let $y_\alpha$ be a vector with finite support and rational values such that $\|x_\alpha-y_\alpha\|\leq \frac{\delta}{2}.$ Then, clearly, for every $\alpha<\beta<\omega_1$
$$\|x_\alpha-x_\beta\| - \delta \leq \|y_\alpha-y_\beta\| \leq \|x_\alpha-x_\beta\|+\delta.$$

Now pass to an uncountable set $\Gamma\subseteq \omega_1$ such that $(\supp(y_\alpha))_{\alpha<\omega_1}$ is a $\Delta$-system with the root $\Delta$ and $y_\alpha|\Delta = y_\beta|\Delta$ for every $\alpha<\beta$ in $\Gamma$, which is possible since
$y_\alpha$'s have rational values. For $\alpha\in \Gamma$ let $z_\alpha = P_{\omega_1\setminus \Delta} (y_\alpha)$. Then for every $\alpha<\beta$ in $\Gamma$ we have
$$\|z_\alpha-z_\beta\| = \|y_\alpha-y_\beta\|,$$
which completes the proof.

\end{proof}

\begin{thm}\label{Aronszajn}
Let $T$ be an $\omega_1$-tree and let $\F$ be the family of finite chains in $T$. Then the following conditions are equivalent:
\begin{enumerate}[(1)]
	\item $T$ is Aronszajn,
\item the unit ball of the space $X_\F$ does not contain an uncountable $(1+\varepsilon)$-separated sets for any $\varepsilon>0$.
\end{enumerate}
\end{thm}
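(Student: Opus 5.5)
For $(2)\implies(1)$ I would argue by contraposition. Suppose $T$ is not Aronszajn, so it has a cofinal branch $b=\{t_\alpha:\alpha<\omega_1\}$, where $t_\alpha\in T_\alpha$. For $\alpha<\omega_1$ let $x_\alpha=\frac12(e_{t_{2\alpha}}-e_{t_{2\alpha+1}})$. Since a chain has $\ell_1$ norm, $\|x_\alpha\|=1$. For $\alpha\neq\beta$ the support of $x_\alpha-x_\beta$ is a four-element subset of $b$, which is a chain, so
\[\|x_\alpha-x_\beta\|=\|x_\alpha-x_\beta\|_1=2.\]
This gives an uncountable $2$-separated subset of the unit ball, so (2) fails with $\varepsilon=1$.

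For $(1)\implies(2)$, assume $T$ is Aronszajn and that $(x_\alpha)_{\alpha<\omega_1}$ is a $(1+\varepsilon)$-separated family in the unit ball. Apply Lemma \ref{reduction} with a small $\delta$, say $\delta<\varepsilon/4$. This gives an uncountable $\Gamma$ and vectors $z_\alpha$ that have finite, pairwise disjoint supports and satisfy $\|z_\alpha\|\le 1$ and $\|z_\alpha-z_\beta\|\ge 1+\varepsilon/2$ for $\alpha<\beta$ in $\Gamma$. The norm is a supremum over finite chains, so for each pair there is a chain $C$ with
\[\sum_{t\in C}|z_\alpha(t)|+\sum_{t\in C}|z_\beta(t)|\ge 1+\varepsilon/2 .\]
Since each summand is at most $1$, the chain $C$ must contain points of both supports, and each part must carry mass at least $\varepsilon/2$. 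The aim is to turn this into an uncountable narrow family, which will contradict Proposition \ref{branch}.

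Concretely, refine $\Gamma$ so that all $z_\alpha$ have the same support size $k$, and so that, after fixing an enumeration of each support, the values and the order pattern of each support are the same for all $\alpha$. Each $z_\alpha$ has only finitely many "heavy subchains", that is, subchains $D\subseteq\supp z_\alpha$ with $\sum_{t\in D}|z_\alpha(t)|\ge\varepsilon/2$. For each such $D$ let $m_D$ be its maximal element in the tree order. Put $F_\alpha=\{m_D: D \text{ a heavy subchain of } \supp z_\alpha\}$; this is a finite set of size at most $2^k$.

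Now take $\alpha\ne\beta$ and the chain $C$ above. Then $C\cap\supp z_\alpha$ and $C\cap\supp z_\beta$ are heavy subchains, and their maxima both lie on the chain $C$, so they are comparable. Hence the sets $F_\alpha$ (pairwise disjoint, because the supports are) form an uncountable narrow family. By Proposition \ref{branch}, $T$ then has a cofinal branch, contradicting that $T$ is Aronszajn.

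The main obstacle is making this reduction clean. One point to check is that the heavy parts genuinely come out comparable; taking maxima handles this. The other is that the passage through Lemma \ref{reduction} keeps the separation strictly above $1$, which is ensured by choosing $\delta<\varepsilon/4$.
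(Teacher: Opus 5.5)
Your proof is correct and follows the same route as the paper: a cofinal branch gives an uncountable $2$-separated set, and conversely Lemma \ref{reduction} reduces to finitely and disjointly supported vectors, whose pairwise separation above $1$ yields an uncountable narrow family, so Proposition \ref{branch} gives a cofinal branch. The heavy-subchain maxima and the uniformization of values and order patterns are not needed. The chain $C$ already meets both supports, so the sets $\supp(z_\alpha)$ themselves form a narrow family, which is what the paper uses. Similarly, the single basis vectors $e_t$ for $t$ in the branch suffice for the easy direction.
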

\begin{proof}
If $T$ contains a cofinal branch $B\subseteq T$, then clearly $\{\chi_{\{b\}}:\ b\in B\}$ is an uncountable $2$-separated set in the unit ball of $X_\F$. Now suppose that $(x_\alpha)_{\alpha<\omega_1}$ is an uncountable $(1+\varepsilon)$-separated
set in the unit ball of $X_\F$ for some $\varepsilon>0$. By Lemma \ref{reduction} we may assume, picking a smaller $\varepsilon>0$ if needed, that $x_\alpha$'s have finite, pairwise disjoint supports and rational values. Also, we may assume that there is $n$
such that $|\supp(x_\alpha)|=n$ for every $\alpha<\omega_1$. For every $\alpha<\beta<\omega_1$ $$\|x_\alpha-x_\beta\|\geq 1+\varepsilon > 1 \geq \max\{\|x_\alpha\|,\|x_\beta\|\},$$
which means that there are comparable $a\in \supp(x_\alpha)$, $b\in \supp(x_\beta)$, i.e. the family $(\supp(x_\alpha))_{\alpha<\omega_1}$ is narrow. Therefore, by Proposition \ref{branch} the tree  $T$ contains a cofinal branch.
\end{proof}

\begin{thm}\label{Suslin-equilateral}
Let $T$ be an $\omega_1$-tree and let $\F$ be the family of finite chains in $T$. Then the following conditions are equivalent:
\begin{enumerate}[(1)]
	\item $T$ is Suslin,
\item $X_\F$ does not contain an uncountable equilateral set
\end{enumerate}
\end{thm}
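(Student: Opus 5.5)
We prove both directions by contraposition.

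\emph{If $T$ is not Suslin, then $X_\F$ contains an uncountable equilateral set.} Either $T$ has a cofinal branch or an uncountable antichain. If $B\subseteq T$ is an uncountable chain, the vectors $\chi_{\{b\}}$, $b\in B$, are $2$-equilateral, since $\{a,b\}$ is a chain and so $\|\chi_{\{a\}}-\chi_{\{b\}}\|_\F=2$. If $A$ is an uncountable antichain, then every $F\in\F$ meets $A$ in at most one point. Hence $\|\chi_{\{a\}}-\chi_{\{b\}}\|_\F=1$ for distinct $a,b\in A$, and $\{\chi_{\{a\}}\colon a\in A\}$ is $1$-equilateral. This direction is immediate.

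\emph{If $T$ is Suslin, there is no uncountable equilateral set.} Suppose $(x_\alpha)_{\alpha<\omega_1}$ is $r$-equilateral. After translating and scaling, as remarked before the definitions of this section, we may assume the $x_\alpha$ lie on the unit sphere and the set is $1$-equilateral.
\begin{enumerate}
\item Since $T$ is Suslin it is Aronszajn. By Theorem \ref{Aronszajn} there is no uncountable $(1+\varepsilon)$-separated subset of the ball, but this does not yet give a contradiction, because $1$-equilateral sets are not $(1+\varepsilon)$-separated.
\item Apply Lemma \ref{reduction} with a small $\delta$ to get uncountably many $z_\alpha$ with finite, pairwise disjoint supports and rational values. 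They satisfy $1-\delta\le\|z_\alpha-z_\beta\|\le 1+\delta$, and the norms $\|z_\alpha\|$ are within $\delta$ of the norms of the corresponding vectors $x_\alpha-x_{\alpha_0}$ after the translation. Before translating, keep a base point $x_{\alpha_0}$ with $\alpha_0\notin\Gamma$. The crucial quantity is then $\|x_\alpha-x_{\alpha_0}\|=1=\|x_\alpha-x_\beta\|$.
\item Thin out so that the supports $S_\alpha$ all have the same size $n$ and the values of $z_\alpha$, enumerated along a fixed order of $S_\alpha$, form the same rational pattern. Refine further using the tree structure: the height pattern, the comparability pattern inside $S_\alpha$, and the configuration of $S_\alpha$ relative to the countable set $\supp(x_{\alpha_0})$ all stabilize. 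The pattern relative to $x_{\alpha_0}$ includes the traces $p_\gamma[S_\alpha]$ on the levels of its support.
\end{enumerate}

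The key dichotomy for the supports is the following. If $S_\alpha\cup S_\beta$ contains no comparable pair across, then $\|z_\alpha-z_\beta\|=\max(\|z_\alpha\|,\|z_\beta\|)$. If there are comparable pairs across, the norm is governed by chains passing through both supports. Use the ccc of a Suslin tree in the following form: every uncountable family of pairwise disjoint finite sets has two members with many comparabilities across. Concretely, apply the Suslin property to the tree of finite "pattern chains," using the standard fact that a Suslin tree has no uncountable family of pairwise incomparable finite subsets in the product sense; the finite powers are not Suslin in general, so this needs care. The aim is to find $\alpha\ne\beta$ such that the maximal chain in $S_\alpha$ realizing $\|z_\alpha\|$ lies below, and extends through, a chain in $S_\beta$ realizing $\|z_\beta\|$. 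This gives $\|z_\alpha-z_\beta\|$ close to $\|z_\alpha\|+\|z_\beta\|$. Combined with the rigid equalities $\|z_\alpha-z_\beta\|\approx 1$ and $\|z_\alpha\|\approx 1$, coming from the base point, this yields $2\approx 1$, a contradiction.

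\textbf{Main obstacle.} The hardest step is securing full comparability of the norming chains. A Suslin tree only guarantees that uncountable families contain comparable pairs, and we need the norming chain of $z_\alpha$ to be extended by that of $z_\beta$. Suslin trees are not necessarily productive, so we cannot simply apply ccc to $T^n$. I would try working with the top element of each norming chain, or with its projection $p_\gamma$ to a common level. Among uncountably many nodes some pair is comparable, and we can arrange that the whole chain of $z_\beta$ lies above the top of $z_\alpha$'s chain by the height separation from Lemma \ref{n-proper}. Then sign considerations, forced by rational pattern uniformization, make the contributions add rather than cancel in $z_\alpha-z_\beta$. If the signs cancel, I would compare $\|z_\alpha-z_\beta\|$ against $\|z_\alpha-z_\gamma\|$ for a third index $\gamma$ incomparable to both, which exists since the tree is Aronszajn. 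The equilateral condition then forces a discrepancy.
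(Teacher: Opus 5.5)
Your first direction (not Suslin implies an uncountable equilateral set) is correct. The second direction has a genuine gap where you derive the contradiction. You use $\|z_\alpha\|\approx 1$, ``coming from the base point,'' and in step 2 you claim that $\|z_\alpha\|$ is within $\delta$ of $\|x_\alpha-x_{\alpha_0}\|=1$. This does not follow from Lemma \ref{reduction}. There $z_\alpha=P_{\omega_1\setminus\Delta}(y_\alpha)$ is obtained by deleting the common root part $w=P_\Delta(y_\alpha)$, and only differences are preserved: $\|z_\alpha-z_\beta\|=\|y_\alpha-y_\beta\|$. What your base point actually gives is $\|z_\alpha-u\|\approx 1$ for the fixed vector $u=x_{\alpha_0}-w$, not a bound on $\|z_\alpha\|$ itself. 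The root may carry a large part of the norm.

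Fix $r=\|z_\alpha\|$ on an uncountable set; this is possible since norms of finitely supported rational vectors are rational. A priori you only know $\frac{1-\delta}{2}\le r\le 1$. So when you find $\alpha\ne\beta$ whose norming chains combine, you get $2r=\|z_\alpha-z_\beta\|\le 1+\delta$, i.e.\ $r\lesssim 1/2$. That is not a contradiction. Your vague fallback with a third index $\gamma$ does not supply the missing bound either.

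The missing idea is exactly the one you discarded in step 1. The rescaled vectors $z_\alpha/r$ lie in the unit ball and are pairwise at distance at least $(1-\delta)/r$. Since a Suslin tree is Aronszajn, Theorem \ref{Aronszajn} forces $(1-\delta)/r\le 1$, i.e.\ $r\ge 1-\delta$. For small $\delta$ this clashes with $r\le(1+\delta)/2$. This is how the paper closes the argument: with $\delta=0.02$ it gets $r>0.6$ from Theorem \ref{Aronszajn} against $r\le 0.56$ from the comparability step.

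Your last step is also more complicated than it needs to be. The norm only sees absolute values and the supports are disjoint, so there are no sign cancellations to worry about, and no product ccc is needed. Take norming chains $F_\alpha$ with maxima $t_\alpha$. If $t_\alpha$ and $t_\beta$ are comparable, then $F_\alpha\cup F_\beta$ lies below the larger of the two, so it is a chain and $\|z_\alpha-z_\beta\|=2r$. Some such pair exists simply because $T$ has no uncountable antichain. You did reach this top-element idea, and it is the paper's argument.
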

\begin{proof}
(2) $\implies$ (1).  Note that if $A\subseteq T$ is an uncountable chain or antichain, then $\{\chi_{\{a\}}:\ a\in A\}$ is an uncountable equilateral set.\medskip

(1)$\implies$ (2). Assume $T$ is a Suslin tree and $X_\F$ contains an uncountable equilateral set. As discussed beforehand, if there is an uncountable equilateral set in $X_\F$, then there is an uncountable $1$-equilateral set $\{x_\alpha:\
\alpha<\omega_1\}$ in the unit sphere of $X_\F$. Apply Lemma \ref{reduction} and pass to an uncountable subfamily to find vectors $(z_\alpha)_{\alpha<\omega_1}$ with finite, pairwise disjoint supports and rational values such that $$0.98\leq
\|z_\alpha-z_\beta\|\leq 1.02$$ for every $\alpha<\omega_1$. We may assume that $r=\|z_\alpha\|$ does not depend on $\alpha<\omega_1$. Let $F_\alpha\subseteq \supp(z_\alpha)$ be a chain such that 
$$r = \sum_{a\in F_\alpha} |z_\alpha(a)|$$
and let $t_\alpha=\max F_\alpha$. Since $T$ is Suslin, the set $\{t_\alpha:\ \alpha<\omega_1\}$ cannot be an antichain, and therefore there are $\alpha<\beta<\omega_1$ such that $t_\alpha$ and $t_\beta$ are comparable, i.e. $F_\alpha \cup F_\beta$ is a chain. For such $\alpha,\beta$ we have 
$$\|z_\alpha-z_\beta\| = 2r\leq 1.02,$$
i.e. $r\leq 0.56$.
However, by Theorem \ref{Aronszajn}, the unit ball of $X_\F$ does not contain an uncountable $(1+\frac{0.38}{0.6})$-separated sets, and so there are $\alpha<\beta<\omega_1$ such that
$$0.98 \leq \|z_\alpha-z_\beta\| < (1+\frac{0.38}{0.6})r,$$
which means that $r>0.6$ -- a contradiction.
\end{proof}

Let us formulate an obvioius corollary.

\begin{cor}\label{Suslin} If $G$ is a comparability graph of a Suslin tree, then $X_G$ does not contain an uncountable equilateral set.
\end{cor}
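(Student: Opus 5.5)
The corollary follows directly from Theorem \ref{Suslin-equilateral}. The plan is to identify $X_G$ with the space $X_\F$ appearing there and then quote the implication (1)$\implies$(2).

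First, let $T$ be a Suslin tree, realized as a tree order on $\omega_1$, and let $G$ be its comparability graph. Vertices $s,t$ are adjacent in $G$ exactly when they are $\leq_T$-comparable. A finite set of vertices is therefore a clique of $G$ if and only if any two of its elements are comparable, that is, if and only if it is a finite chain in $T$. So the family of finite cliques of $G$ coincides with the family $\F$ of finite chains of $T$, and hence $X_G = X_\F$ with the same norm. This is the remark made in Section \ref{sec:Examples} that $\|\cdot\|_c = \|\cdot\|_{\mathrm{finite\ chains}}$ for the comparability coloring.

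Second, every Suslin tree is an $\omega_1$-tree, which is the setting of Theorem \ref{Suslin-equilateral}. Applying the implication (1)$\implies$(2) of that theorem to $T$ shows that $X_\F$, and hence $X_G$, contains no uncountable equilateral set.

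There is no genuine obstacle. The only point to check is that the paper's convention for a Suslin tree, given as a subset $S\subseteq\omega_1$ in the Examples section, matches the $\omega_1$-tree on $\omega_1$ used in Theorem \ref{Suslin-equilateral}. This is harmless: the tree has height $\omega_1$ with countable levels, so it can be re-indexed by $\omega_1$, and the space $X_\F$ changes only by an isometric relabeling of coordinates.
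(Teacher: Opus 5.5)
Your proposal is correct and matches the paper's argument. The paper records this as an obvious corollary of Theorem \ref{Suslin-equilateral}: the finite cliques of the comparability graph of $T$ are precisely the finite chains of $T$, so $X_G=X_\F$ and the implication (1)$\implies$(2) applies, with your re-indexing remark (levels are antichains, hence countable) being the only bookkeeping needed.
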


The following proposition, being a direct generalization of a Proposition 19 in \cite{KamilKoszmider}, shows that some additional set-theoretic requirements are necessary to obtain such a space.

\begin{prop}[MA+$\neg$ CH]\label{MA-equilateral}
Let $\F\subseteq [\omega_1]^{<\omega}$. Then the unit sphere of the space $X_\F$ contains an uncountable equilateral set.
\end{prop}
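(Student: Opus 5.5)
Under MA$(\omega_1)$ I will split into two cases according to whether $\overline{\F}$ is Corson. The aim in each case is an uncountable set of the form $\{\chi_{\{\alpha\}}\colon \alpha\in A\}$, or a slight variant of it.

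\textbf{Case 1: $\overline{\F}$ contains an uncountable set $A$.} Then for $\alpha\neq\beta$ in $A$ we have $\{\alpha,\beta\}\in\F$. Hence $\|e_\alpha-e_\beta\|_\F=2$, and $\{e_\alpha\colon\alpha\in A\}$ is a $2$-equilateral set in the unit sphere.

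\textbf{Case 2: $\F$ is Corson.} I will show that $\F$ has an uncountable set $A$ with $[A]^2\cap\F=\emptyset$, i.e. $A$ is $\F^\perp$-homogeneous. Then $\|e_\alpha-e_\beta\|_\F=1$ for $\alpha\neq\beta$ in $A$. To rescale into the unit sphere, note that $\{e_\alpha\}$ already lies in the sphere and is $1$-equilateral, so we are done.

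To find $A$, I will use the ccc poset $\mathbb{Q}$ of finite sets $p\subseteq\omega_1$ with $[p]^2\cap\F=\emptyset$, ordered by reverse inclusion. If $\mathbb{Q}$ (or its restriction below some condition) is ccc, MA$(\omega_1)$ applied to the $\omega_1$ dense sets $D_\xi=\{p\colon \max p>\xi\}$ yields an uncountable generic $A$. The $D_\xi$ are dense provided every $p$ has cofinally many $\alpha$ with $\{\alpha,\gamma\}\notin\F$ for all $\gamma\in p$. If this fails, then $\{\alpha\colon \exists\gamma\in p\ \{\alpha,\gamma\}\in\F\}$ is co-countable. Restricting to a suitable condition, or first passing to a suitable uncountable set, should handle this.

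The main obstacle is ccc of $\mathbb{Q}$, which is exactly where the Corson assumption must be used. Take an uncountable family of conditions forming a $\Delta$-system with root $R$ and tails $p_\xi\setminus R$ of equal size $n$, and suppose it is an antichain. Then for every $\xi<\eta$ some pair between the tails lies in $\F$. By a Dushnik--Miller/ccc-of-the-measure-algebra style argument one should extract an uncountable family in which all cross pairs in a fixed coordinate pattern $(i,j)$ are in $\F$. Full homogeneity is not enough, though, since $\F$ is not monochromatic. So I would instead work directly with the compact $\overline{\F}$ and use MA to produce an uncountable element of $\overline{\F}$, giving a contradiction with Corsonness.

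Concretely, I would consider the poset of finite sets in $\F$ (finite approximations of an uncountable member of $\overline{\F}$) together with this $\mathbb{Q}$. The claim to prove is a dichotomy: under MA$(\omega_1)$, for a hereditary $\F$ on $\omega_1$, one of the two posets has an uncountable subfamily that is ccc. I would prove it via the standard Todorcevic-style argument: if the chain poset fails ccc on every uncountable set, the witnessing antichains give, after $\Delta$-systems, enough structure to make the antichain poset ccc. This is essentially the $K_2$/Knaster version of Proposition 19 in \cite{KamilKoszmider}, and I would follow that proof.

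If the dichotomy proves too hard in full generality, a fallback is to use $1$-equilateral sets of the form $\{e_\alpha+\text{correction}\}$ chosen generically. One then forces an uncountable set of vectors $x_\alpha$ with finite disjoint supports and $\|x_\alpha-x_\beta\|=1$, using finite conditions and checking ccc via the $\Delta$-system lemma and the finiteness of the possible norm patterns.
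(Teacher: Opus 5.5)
\textbf{There is a genuine gap.} The claim in your Case~2 is false, and so is the dichotomy you propose to prove it. Take a Sierpi\'nski coloring $c$ on $\omega_1$ (it exists in $\mathsf{ZFC}$, hence also under $\mathsf{MA}+\neg\mathsf{CH}$) and let $\F=\F_c$. There is no uncountable $1$-monochromatic set, so $\F$ is Corson. There is also no uncountable $0$-monochromatic set, i.e.\ no uncountable $A$ with $[A]^2\cap\F=\emptyset$.

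Under $\mathsf{MA}+\neg\mathsf{CH}$ this also means that neither the poset $\F$ nor the poset $\F^\perp$ is ccc on any uncountable set, since otherwise MA would produce an uncountable homogeneous set. So the ``Todorcevic-style'' dichotomy you want cannot hold. Worse, for this $\F$ no uncountable family $\{e_\alpha\colon\alpha\in A\}$ is equilateral at all: $\|e_\alpha-e_\beta\|_\F$ equals $2$ or $1$ according to the colour of $\{\alpha,\beta\}$. So any strategy ending with singletons is doomed. Your fallback (forcing a $1$-equilateral family with finite conditions) names no poset and gives no reason for ccc, so it does not fill the gap.

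\textbf{The missing idea.} The failure of ccc is itself the source of the equilateral set, provided you use vectors with larger supports. Work with $\F^\perp$ ordered by reverse inclusion; this is exactly your $\mathbb{Q}$. Every nonempty $A\in\F^\perp$ satisfies $\|\chi_A\|_\F=1$.

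If $A,B\in\F^\perp$ are incompatible, then $A\cup B\notin\F^\perp$. So some $F\in\F$ meets $A\cup B$ in two points, while it meets each of $A$ and $B$ in at most one point. Hence $F$ contains some $a\in A\setminus B$ and $b\in B\setminus A$, and $\|\chi_A-\chi_B\|_\F=2$. An uncountable antichain in $\F^\perp$ therefore gives a $2$-equilateral set in the unit sphere. This is what happens for the Sierpi\'nski example.

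If instead $\F^\perp$ is ccc, MA yields an uncountable $I$ with $[I]^{<\omega}\subseteq\F^\perp$, and then $\{e_\alpha\colon\alpha\in I\}$ is $1$-equilateral. The paper gets $I$ from the fact that, under $\mathsf{MA}+\neg\mathsf{CH}$, such a ccc poset of size $\omega_1$ is $\sigma$-centered. A direct density argument below a condition forcing $\{\alpha\colon\{\alpha\}\in\dot G\}$ to be uncountable works equally well. No Corson case split is needed; your (correct) Case~1 is subsumed by the antichain case.
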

\begin{proof}
	Consider the forcing notion $\mathbb P=\F^\perp$ with the order given by reverse inclusion. Clearly, for every $A\in \mathbb P$ we have $\|\chi_A\|=1$.
	Under MA + $\neg$ CH the forcing $\mathbb P$ either contains an uncountable antichain or may be represented as $\mathbb P = \bigcup_{n<\omega} \mathcal A_n$ with each $\mathcal A_n$ closed under finite unions.

If $\mathcal A\subseteq \mathbb P$ is an uncountable antichain, then $\{ \chi_A:\ A\in \mathcal A\}$ is an uncountable $2$-equilateral set. If $\mathbb P = \bigcup_{n<\omega} \mathcal A_n$ with each $\mathcal A_n$ closed under finite unions, then
there is $n<\omega$ and an uncountable $I\subseteq \omega_1$ such that $\{\alpha\}\in A_n$ for every $\alpha\in I$. It follows that $I$ is in the closure of $\mathcal{F}^\perp$, and so $\mathcal{F}|I$ consists of singletons. Hence, $X_\F$ contains an isometric copy of $c_0(I)$ , and therefore an uncountable $1$-equilateral set $\{\chi_{\{\alpha\}}:\ \alpha\in I\}$. 
\end{proof}

If, instead of $\F$ being the family of finite chains in $T$, we consider it to be the family of finite antichains, the situation turns out to be drastically different.

\begin{prop}
Let $\kappa$ be an infinite regular cardinal, let $T$ be a well pruned $\kappa$-Aronszajn tree. Let $\mathcal{F}$ be the family of finite antichains in $T$.
Then the unit sphere of the Banach space $X_\mathcal F$ contains a $(1+\frac{1}{2})$-separated set of size $\kappa$.
\end{prop}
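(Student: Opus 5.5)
The plan is to build the separated set directly, by transfinite recursion of length $\kappa$. Each vector will have the form
\[ x_\alpha = \tfrac12 e_{a_\alpha} + \tfrac12 e_{b_\alpha}, \]
where $a_\alpha, b_\alpha \in T$ are incomparable. Since $\{a_\alpha, b_\alpha\}$ is an antichain and every set in $\mathcal F$ has at most these two points in the support, $\|x_\alpha\|_{\mathcal F} = 1$, so every $x_\alpha$ lies in the unit sphere. For $\beta<\alpha$ the vector $x_\alpha - x_\beta$ has absolute value $\tfrac12$ on each of the four points $a_\alpha, b_\alpha, a_\beta, b_\beta$. Hence $\|x_\alpha - x_\beta\|_{\mathcal F} \geq \tfrac32$ as soon as some three of these four points form an antichain. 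The recursion will be arranged so that this always happens.

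\textbf{Recursive step.} Suppose $(a_\beta, b_\beta)_{\beta<\alpha}$ have been chosen for some $\alpha<\kappa$. The set $\{a_\beta, b_\beta : \beta<\alpha\}$ has size $<\kappa$. By regularity of $\kappa$ its heights are bounded, so there is $\gamma<\kappa$ exceeding all of them. Since $T$ is well pruned, we may pick some $c \in T_\gamma$.

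\textbf{Two incomparable extensions of $c$.} The cone $\{t \in T : t \geq c\}$ meets every level above $\gamma$, again by well prunedness. It is therefore not a chain, since otherwise it would contain a branch of length $\kappa$, contradicting that $T$ is $\kappa$-Aronszajn. So we can choose incomparable $a_\alpha, b_\alpha \geq c$.

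\textbf{Verification of separation.} Fix $\beta<\alpha$. The nodes $a_\beta, b_\beta$ lie strictly below level $\gamma$ and are incomparable. Thus $c$ extends at most one of them, say it does not extend $b_\beta$. Then $b_\beta$ and $c$ are incomparable, because $\mathrm{ht}(b_\beta)<\gamma$ and $b_\beta \not\leq c$. Consequently $b_\beta$ is incomparable with every extension of $c$, in particular with $a_\alpha$ and $b_\alpha$. So $\{a_\alpha, b_\alpha, b_\beta\}$ is an antichain, and
\[ \|x_\alpha - x_\beta\|_{\mathcal F} \geq \tfrac12 + \tfrac12 + \tfrac12 = \tfrac32. \]
The case where $c$ does not extend $a_\beta$ is symmetric.

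The recursion runs for all $\alpha<\kappa$ and produces $\kappa$ distinct vectors, which form the required $(1+\tfrac12)$-separated subset of the unit sphere. The only step needing care is the existence of two incomparable extensions of $c$. This is exactly where the Aronszajn property and well prunedness are used. The rest is bookkeeping of heights via regularity.
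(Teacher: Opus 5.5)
Your proof is correct and follows the paper's construction. The paper also builds triples $(a_\alpha,b_\alpha,c_\alpha)$ with $c_\alpha<a_\alpha,b_\alpha$, with $a_\alpha,b_\alpha$ incomparable and $c_\alpha$ above all earlier heights, and it uses the same vectors $\frac12\chi_{\{a_\alpha,b_\alpha\}}$. Your verification, that $c$ extends at most one of $a_\beta,b_\beta$ so the other is incomparable with both new nodes, is a slightly cleaner version of the paper's case argument, and you also justify (via regularity, well-prunedness and the Aronszajn property) that the recursion can be carried out, which the paper leaves implicit.
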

\begin{proof}
We inductively construct sequences $(a_\alpha,b_\alpha,c_\alpha)_{\alpha<\kappa}$ of nodes in $T$ such that:
\begin{itemize}
\item $c_\alpha<a_\alpha,b_\alpha$ for every $\alpha<\kappa$
\item $a_\alpha$ and $b_\alpha$ are incomparable for every $\alpha<\kappa$
\item $\max\{ht(a_\alpha), ht(b_\alpha)\} < ht(c_\beta)$ for every $\alpha<\beta<\kappa$
\end{itemize}

We claim that $(\frac{1}{2}\chi_{\{a_\alpha,b_\alpha\}})_{\alpha<\omega_1}$ form a $(1+\frac{1}{2})$-equilateral set in the unit sphere of $X_\F$.

Indeed, fix $\alpha<\beta<\kappa$. It cannot be the case that both $a_\beta > a_\alpha$ and $b_\beta>b_\alpha$, since we would have that $c_\beta$ is above $a_\alpha$ and $b_\alpha$ at the same time. For the same reason it cannot be the case that both $a_\beta>b_\alpha$ and $b_\beta>a_\alpha$. Therefore the set $\{a_\alpha, b_\alpha, a_\beta, b_\beta\}$ contains an antichain of size 3 and so
$$\|\chi_{\{a_\alpha,b_\alpha\}}- \chi_{\{a_\beta,b_\beta\}}\|\geq 3.$$

\end{proof}

In the light of Corollary \ref{Suslin} the following question appears: is the existence of a combinatorial Banach space of density $\omega_1$ without uncountable equilateral set in the unit ball equivalent to the existence of a Suslin tree? The
answer is negative: the construction from \cite{KamilKoszmider} yielding such a space works under CH (and CH does not imply the existence of a Suslin tree). 

\begin{prob} Is there a (natural) property $\varphi$ of a Banach space such that the existence of a Banach space with property $\varphi$ is equivalent to the existence of a Suslin tree?
\end{prob}

Recall that the Suslin hypothesis, saying that Suslin trees do not exist, has a topological reformulation: there is no linearly ordered compact space which is ccc and non-separable. In the above problem we ask about a reformulation of Suslin
hypothesis in the terms of Banach spaces. Of course we can define such $\varphi$ in the following way: a Banach space has $\varphi$ if it is induced by chains of a tree on $\omega_1$ and does not contain neither copy of $\ell_1(\omega_1)$ nor a copy
of $c_0(\omega_1)$. However, such $\varphi$ is highly unsatisfactory. We seek for a reformulation which does not refer directly to any combinatorial property. 

There is also another problem which seems to be natural. Say that a Banach space is \emph{Suslin} if it is of the form $X_G$, where $G$ is the comparability graph of a Suslin tree. Recall that whenever $T$ is a Suslin tree, and $S$ is its subtree of height $\omega_1$, $S$ is also a Suslin tree. The following asks if an analogous property is true at the \emph{analytic} level:

\begin{prob} Let $X$ be a Suslin space, and let $Y$ be a closed subset of $X$ of density $\omega_1$. Does $Y$ contains an isomorphic copy of a Suslin space? 
\end{prob}

One can consider a particular case when $Y$ is generated by a sequence $(x_\alpha)_{\alpha<\omega_1}$, where $\|x_\alpha\| = 1$, and $x_\alpha$ is supported on $T_\alpha$, for each $\alpha$. For example, if each $x_\alpha$ is supported by a
singleton, then $Y$ is a Suslin space itself, according to the remark above. However, even the case where each $x_\alpha$ is supported by two points seems to be unclear and interesting: let $l_\alpha, r_\alpha \in T_\alpha$ for each $\alpha$ and let
$x_\alpha = e_{l_\alpha} + e_{r_\alpha}$. The question is if the space $Y$ generated by $(x_\alpha)_{\alpha<\omega_1}$ contains an isomorphic copy of a Suslin space. Both $\{l_\alpha\colon \alpha<\omega_1\}$ and $\{r_\alpha\colon \alpha<\omega_1\}$
are Suslin trees, so the question is if we can \emph{synchronize} them, e.g. by passing to an uncountable $I\subseteq \omega_1$, in the way that $(x_\alpha)_{\alpha\in I}$ generates a Suslin space.

\section{AI disclosure}

The sole contribution of AI agents to this work was the generation of this very sentence.

\bibliographystyle{alpha}
\bibliography{bib-ppoint}

\begin{thebibliography}{BNFJPB25}

\bibitem[AM93]{Argyros93}
Spiros Argyros and Sofokles Mercourakis.
\newblock On weakly {L}indelof {B}anach spaces.
\newblock {\em The Rocky Mountain Journal of Mathematics}, 23(2):395--446,
  1993.

\bibitem[AM14]{LargeArgyros}
Spiros~A. Argyros and Pavlos Motakis.
\newblock {$\alpha$}-large families and applications to {B}anach space theory.
\newblock {\em Topology Appl.}, 172:47--67, 2014.

\bibitem[Bau83]{Baumgartner}
James~E. Baumgartner.
\newblock Iterated forcing.
\newblock In {\em Surveys in Set Theory}, volume~87 of {\em London Mathematical
  Society Lecture Note Series}, pages 1--59. Cambridge University Press,
  Cambridge, 1983.

\bibitem[BNC15]{towers}
Piotr Borodulin-Nadzieja and David Chodounsk{\'y}.
\newblock Hausdorff gaps and towers in $\mathcal{P}(\omega)/\mathrm{Fin}$.
\newblock {\em Fundamenta Mathematicae}, 229(3):197--229, 2015.

\bibitem[BNFA25]{Perfect}
Piotr Borodulin-Nadzieja, Barnabas Farkas, and Pelczar-Barwacz Anna.
\newblock Perfect graphs and banach spaces.
\newblock {\em preprint}, 2025.

\bibitem[BNFJPB25]{ZOO}
Piotr Borodulin-Nadzieja, Barnab\'as Farkas, Sebastian Jachimek, and Anna
  Pelczar-Barwacz.
\newblock The zoo of combinatorial {B}anach spaces.
\newblock {\em Studia Math.}, 282(2):101--132, 2025.

\bibitem[Dre88]{Drewnowski}
Lech Drewnowski.
\newblock On uncountable unconditional bases in {B}anach spaces.
\newblock {\em Studia Math.}, 90(3):191--196, 1988.

\bibitem[EO81]{EltonOdell}
John Elton and Edward Odell.
\newblock The unit ball of every infinite-dimensional normed linear space
  contains a $(1+\varepsilon)$-separated sequence.
\newblock {\em Colloquium Mathematicae}, 44(1):105--109, 1981.

\bibitem[ER60]{ErdosRado1960}
Paul Erd{\H{o}}s and Richard Rado.
\newblock Intersection theorems for systems of sets.
\newblock {\em Journal of the London Mathematical Society}, s1-35(1):85--90,
  1960.

\bibitem[Fre]{Fremlin-DU}
David~H. Fremlin.
\newblock Problem {DU}.
\newblock note available at
  \url{https://www1.essex.ac.uk/maths/people/fremlin/probDU.pdf}.

\bibitem[Gra98]{Granero}
Antonio~S. Granero.
\newblock On the complemented subspaces of {$c_0(I)$}.
\newblock {\em Atti Sem. Mat. Fis. Univ. Modena}, 46(1):35--36, 1998.

\bibitem[HKR20]{HajekKaniaRusso}
Petr H{\'a}jek, Tomasz Kania, and Tommaso Russo.
\newblock Separated sets and auerbach systems in {Banach} spaces.
\newblock {\em Transactions of the American Mathematical Society},
  373(10):6961--6998, 2020.

\bibitem[HM75]{HajnalMate}
Andr{\'a}s Hajnal and Attila M{\'a}t{\'e}.
\newblock Set mappings, partitions, and chromatic numbers.
\newblock In H.~E. Rose and J.~C. Shepherdson, editors, {\em Logic Colloquium
  '73}, volume~80 of {\em Studies in Logic and the Foundations of Mathematics},
  pages 347--379. North-Holland, Amsterdam, 1975.

\bibitem[HR19]{RussoHajek}
Petr H\'ajek and Tommaso Russo.
\newblock An uncountable version of {P}t\'ak's combinatorial lemma.
\newblock {\em J. Math. Anal. Appl.}, 470(2):1070--1080, 2019.

\bibitem[Kop93]{Koppelberg}
Sabine Koppelberg.
\newblock Characterizations of {C}ohen algebras.
\newblock In {\em Papers on general topology and applications ({M}adison, {WI},
  1991)}, volume 704 of {\em Ann. New York Acad. Sci.}, pages 222--237. New
  York Acad. Sci., New York, 1993.

\bibitem[Kos25]{KoszmiderGeometry}
Piotr Koszmider.
\newblock On ramsey-type properties of the distance in nonseparable spheres.
\newblock {\em Trans. Amer. Math. Soc.}, 378(4):2477--2515, 2025.

\bibitem[KP01]{KryczkaPrus}
Andrzej Kryczka and Stanis{\l}aw Prus.
\newblock Separated sequences in nonreflexive {Banach} spaces.
\newblock {\em Proc. Amer. Math. Soc.}, 129(1):155--163, 2001.

\bibitem[KR24]{KamilKoszmider}
Piotr Koszmider and Kamil Ryduchowski.
\newblock Equilateral and separated sets in some {H}ilbert generated {B}anach
  spaces.
\newblock {\em Proc. Amer. Math. Soc.}, 152(3):1003--1017, 2024.

\bibitem[KvM95]{KunenMill}
Kenneth Kunen and Jan van Mill.
\newblock Measures on {C}orson compact spaces.
\newblock {\em Fund. Math.}, 147(1):61--72, 1995.

\bibitem[KW22]{KoszmiderWark}
Piotr Koszmider and Hugh~M. Wark.
\newblock Large banach spaces with no infinite equilateral sets.
\newblock {\em Bulletin of the London Mathematical Society}, 54(6):2066--2077,
  2022.

\bibitem[LZ69]{Zippin}
Joram Lindenstrauss and Moshe Zippin.
\newblock Banach spaces with a unique unconditional basis.
\newblock {\em J. Functional Analysis}, 3:115--125, 1969.

\bibitem[Mek90]{Mekler90}
Alan~H. Mekler.
\newblock Universal structures in power {$\aleph_1$}.
\newblock {\em J. Symbolic Logic}, 55(2):466--477, 1990.

\bibitem[MPZ25]{Zakrzewski}
Witold Marciszewski, Grzegorz Plebanek, and Krzysztof Zakrzewski.
\newblock Digging into the classes of κ-{C}orson compact spaces.
\newblock {\em Israel J. Math.}, 2025.

\bibitem[Pc68]{Pelczynski68}
Aleksander Pe\l~czy\'nski.
\newblock On {B}anach spaces containing {$L\sb{1}(\mu )$}.
\newblock {\em Studia Math.}, 30:231--246, 1968.

\bibitem[Pe{\l}69]{universalPelczynski}
Aleksander Pe{\l}czy\'{n}ski.
\newblock Universal bases.
\newblock {\em Studia Math.}, 32:247--268, 1969.

\bibitem[Ple97]{Plebanek97}
Grzegorz Plebanek.
\newblock Nonseparable {R}adon measures and small compact spaces.
\newblock {\em Fund. Math.}, 153(1):25--40, 1997.

\bibitem[Ple02]{Plebanek02}
Grzegorz Plebanek.
\newblock On compact spaces carrying random measures of large {M}aharam type.
\newblock volume~43, pages 87--99. 2002.
\newblock 30th Winter School on Abstract Analysis (Lhota nade Rohanovem/Litice
  u \v Cesk\'e{} L\'ipy, 2002).

\bibitem[Ple24]{Plebanek2024}
Grzegorz Plebanek.
\newblock A survey on topological properties of p(k) spaces.
\newblock {\em Japanese Journal of Mathematics}, 19(2):143--180, Oct 2024.

\bibitem[She88]{Shelah88}
Saharon Shelah.
\newblock A graph which embeds all small graphs on any large set of vertices.
\newblock {\em Ann. Pure Appl. Logic}, 38(2):171--183, 1988.

\bibitem[SS88]{Shelah-Steprans}
Saharon Shelah and Juris Stepr\=ans.
\newblock A {B}anach space on which there are few operators.
\newblock {\em Proc. Amer. Math. Soc.}, 104(1):101--105, 1988.

\end{thebibliography}

\end{document}